\tikzstyle{black vertex}=[fill=black, draw=none, shape=circle, tikzit category=nodes]
\tikzstyle{blue vertex}=[fill=blue, draw=none, shape=circle, tikzit category=nodes]
\tikzstyle{red vertex}=[fill=red, draw=none, shape=circle, tikzit category=nodes]
\tikzstyle{G{n+1}}=[fill=red, draw=black, shape=circle, tikzit category=vertex set colour scheme]
\tikzstyle{smooth n}=[fill=blue, draw=black, shape=circle, tikzit category=vertex set colour scheme]
\tikzstyle{G{n}}=[fill={rgb,255: red,0; green,150; blue,0}, draw=black, shape=circle, tikzit category=vertex set colour scheme]
\tikzstyle{empty}=[fill=none, draw=black, shape=circle, tikzit category=vertex set colour scheme, tikzit fill=white]
\tikzstyle{G-bdry}=[fill=blue, draw=gamdomcol, shape=circle, thick, tikzit category=vertex set colour scheme]
\tikzstyle{white}=[fill=white, draw=black, shape=circle]
\tikzstyle{G{n+1}-bdry}=[fill={rgb,255: red,128; green,0; blue,128}, draw=gamdomcol, shape=circle, tikzit category=vertex set colour scheme, thick]
\tikzstyle{1/2 smooth n}=[fill=blue, draw=black, shape=circle, tikzit category=vertex set colour scheme, scale={1/2}]
\tikzstyle{1/3 smooth n}=[fill=blue, draw=black, shape=circle, tikzit category=vertex set colour scheme, scale={1/3}]
\tikzstyle{1/2G{n}q}=[fill={rgb,255: red,0; green,150; blue,0}, draw=black, shape=circle, scale={1/2}]
\tikzstyle{1/4 smooth n}=[fill=blue, draw=black, shape=circle, tikzit category=vertex set colour scheme, scale={1/4}]
\tikzstyle{1/2}=[fill=none, draw=black, shape=circle, tikzit category=vertex set colour scheme, scale=0.5, tikzit fill=white]
\tikzstyle{v thick dots}=[-, mydots]
\tikzstyle{mydashed}=[-, dashed]
\tikzstyle{arrow}=[->]
\tikzstyle{grey edge}=[-, draw={rgb,255: red,180; green,180; blue,180}]
\tikzstyle{G-domain}=[-, draw=gamdomcol]
\tikzstyle{thick arrow}=[->, draw=black, very thick]
\tikzstyle{mapst}=[{|->}]
\tikzstyle{mythick}=[-, thick]
\tikzstyle{thickdashed}=[-, dashed, thick]
\tikzstyle{dashed+G-domain}=[-, draw=gamdomcol, dashed]
\tikzset{
    dot diameter/.store in=\dot@diameter,
    dot diameter=3pt,
    dot spacing/.store in=\dot@spacing,
    dot spacing=5mm,
    mydots/.style={
        line width=\dot@diameter,
        line cap=round,
        dash pattern=on 0pt off \dot@spacing
    }
}
\newcommand{\gampluscol}{\color{red}}
\newcommand{\gamcol}{\color{rgb, 255: red, 0; green, 150; blue, 0}}
\newcommand{\smgamcol}{\color{blue}}
\definecolor{gamdomcol}{RGB}{135, 180, 0}
\newcommand{\gamdomcol}{\color{gamdomcol}}
\setlist[enumerate]{before=\setlength{\baselineskip}{20pt}, itemsep=0pt}
\setlist[itemize]{before=\setlength{\baselineskip}{20pt}, itemsep=0pt}
\newtheorem{clm}{Claim}
\newcommand{\E}{\mathcal E}
\newcommand{\F}{\mathcal F}
\newcommand{\J}{\mathcal J}
\newcommand{\V}{\mathcal V_x}
\newcommand{\T}{\mathcal T}
\newcommand{\K}{\mathbb K}%Puiseux
\newcommand{\hk}{\mathbb{\hat K}}%Levi
\renewcommand{\k}{k}%residue field (alg cl char 0)
\newcommand{\bH}{\mathbb{H}}
\newcommand{\CD}{\overline D}
\newcommand{\an}{\text{an}}
\newcommand{\mg}{\mathfrak g}
\newcommand{\mm}{\mathfrak m}
\newcommand{\q}{q}
\newcommand{\bvec}[1]{{\bf#1}}
\newcommand{\emp}{\emptyset}
\newcommand{\nin}{\notin}
\newcommand{\dilating}{dilating{}}
\renewcommand{\and}{\text{ and }}
\newcommand{\dashto}{\dashrightarrow}
\newcommand{\GV}{\mathcal{X}}
\newcommand{\Dir}[1]{T_{#1}\P^1_\an}
\newcommand{\numericallyrepelling}{numerically repelling}
\newcommand\capcirc{\mathbin{\ooalign{$\cap$\cr
  \hidewidth\raise.2ex\hbox{\scalebox{0.4}{\mbox{$\circ$}}}\hidewidth\cr}}}
\DeclareMathOperator{\PGL}{PGL}
\DeclareMathOperator{\rdeg}{rdeg}
\DeclareMathOperator{\Ram}{Ram}
\DeclareMathOperator{\Crit}{Crit}
\DeclareMathOperator{\Hull}{Hull}
\DeclareMathOperator{\Inj}{Inj}
\DeclareMathOperator{\GCD}{GCD}
\DeclareMathOperator{\Frac}{Frac}
\DeclareMathOperator{\cupdot}{\mathbin{\mathaccent\cdot\cup}}
\DeclareMathOperator{\redct}{red}
\DeclareMathOperator{\Gal}{Gal}
\newcommand{\dmult}{multiplier}
\newcommand{\abs}[2][\@nil]{%
  \def\tmp{#1}%
   \ifx\tmp\@nnil
       \left\lvert#2\right\rvert
    \else
         \left\lvert#2\right\rvert_{#1}
    \fi}
\newtheorem*{thm*}{Theorem}
\def\thesisbool{0}
\newcommand{\thesisarticle}[2]{\if\thesisbool1{#1}\else{#2}\fi}
\begin{document}
  \title{Algebraic Stability for Skew Products}
 \author{\mylongname}
 
\begin{abstract}
In this article we study algebraic stability for rational skew products in two dimensions $\phi : X \dashto X$, i.e. maps of the form $\phi(x, y) = (\phi_1(x), \phi_2(x, y))$.  
We prove that when $X$ is a birationally ruled surface and $\phi_1$ has no superattracting cycles, then we can always find a \emph{smooth} surface $\hat X$ and an algebraic stabilisation $\pi : (\hat \phi, \hat X) \to (\phi, X)$ which is a \emph{birational morphism}. We provide an example of a skew product $\phi$ where $\phi_1$ has a superattracting fixed point and $\phi$ is not algebraically stable on any model.

Our techniques involve transforming the stabilisation issue into a combinatorial dynamical problem for a `non-Archimedean skew product' $\phi_*: \P^1_\an(\K) \to \P^1_\an(\K)$ on the Berkovich projective line over the Puiseux series, $\K$. The Fatou-Julia theory for $\phi_*$ is instrumental to our approach.
\end{abstract}
 
 \maketitle
 
 \tableofcontents
 
 \clearpage

% !TEX root = Algebraic Stability for Skew Products.tex

\section{Introduction} 

The dynamics of rational maps $\phi : X \dashto X$ are often complicated by their lack of continuity. There is always a pullback (or pushforward) action by $\phi$ on the divisors of $X$, but this is not necessarily compatible with iteration. 
We say a rational map $\phi$ is \emph{algebraically stable} iff 
\begin{equation}\label{eqn:AS}
 \forall n \in \N\quad (\phi^*)^n = (\phi^n)^*
\end{equation}
\cite{FS2, Sib}.
This property is more reasonable to hope for than continuity and important for finding dynamical invariants. For instance, the calculation of the dynamical degree, $\lambda_1(\phi) = \lim_{n\to\infty} \|(\phi^n)^*\|^{\frac 1n}$, of an algebraically stable mapping reduces through linear algebra to finding the spectral radius of $f^*$ on $H^{1, 1}(X)$. 
Further, in order to construct invariant measures or currents, it is often necessary for $\phi$ to be algebraically stable to control its dynamical indeterminacy; see \cite{BD, Gue, Gue05, DDG1, DDG2, DDG3}.

On a smooth surface, algebraic stability admits a geometric characterisation that is more accessible in practice; see \cite{roeder}. A \emph{destabilising orbit} is an orbit of (closed) points $p, \phi(p)\dots, \phi^{n-1}(p)$ in $X$, where $\phi^{n-1}(p)$ is an indeterminate point for $\phi$ and $\phi^{-1}(p)$ is a (possibly reducible) curve contracted by $\phi$ to $p$.

\setcounter{prop}{-1}
\begin{prop}[{\cite[pages 138--139]{FS2}, \cite[Theorem 1.14]{DF}}]\label{thm:intro:equivalence}
 Let $\phi : X \dashto X$ be a rational map $\phi$ on a smooth surface $X$. Then $\phi$ is algebraically stable if and only if it has no destabilising orbits.
\end{prop}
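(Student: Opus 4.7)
The plan is to reduce the proposition to the Fornaess--Sibony factorisation criterion: for rational self-maps $\psi, \phi : X \dashto X$ on a smooth projective surface, one has $(\psi \circ \phi)^* = \phi^* \circ \psi^*$ on $\Pic(X)$ if and only if no irreducible curve contracted by $\phi$ has proper image contained in $I(\psi)$. I would first recall or sketch this criterion by passing to a smooth model dominating both the graph of $\phi$ and a suitable resolution of $\psi$, and comparing $(\psi \circ \phi)^* D$ with $\phi^* \psi^* D$ for a divisor $D$ on this model before pushing forward to $X$. The two expressions differ precisely by the classes of exceptional divisors produced in resolving points of $I(\psi)$ that are met by a $\phi$-contracted curve: such classes survive in $\phi^* \psi^*$ but get absorbed into the contracted locus in $(\psi \circ \phi)^*$.

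For the backward implication, assume no destabilising orbit exists and prove $(\phi^n)^* = (\phi^*)^n$ by induction on $n$, the case $n=1$ being trivial. For the inductive step, write $\phi^{n+1} = \phi^n \circ \phi$ and apply the criterion with $\psi = \phi^n$: equality $(\phi^{n+1})^* = \phi^* \circ (\phi^n)^*$ holds provided no curve contracted by $\phi$ has image in $I(\phi^n)$. If such a curve $C$ existed with $\phi(C) = p_0 \in I(\phi^n)$, then by definition of $I(\phi^n)$ the forward orbit $p_0, \phi(p_0), \ldots$ would reach $I(\phi)$ at some minimal step $k \leq n-1$, exhibiting a destabilising orbit of length $k+1$ with contracted curve $C$ --- a contradiction. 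Combining with the inductive hypothesis then gives $(\phi^{n+1})^* = \phi^* \circ (\phi^*)^n = (\phi^*)^{n+1}$.

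For the forward implication, suppose $\phi$ is algebraically stable yet a destabilising orbit $p_0, \ldots, p_{n-1}$ exists with contracted curve $C \subset \phi^{-1}(p_0)$. Pick $n$ minimal so that $p_0, \ldots, p_{n-2} \notin I(\phi)$ while $p_{n-1} \in I(\phi)$. Minimality guarantees that $\phi^n$ is regular on a cofinite subset of $C$ and contracts $C$ set-theoretically onto $p_{n-1} \in I(\phi)$. Applying the factorisation criterion to $\phi^{n+1} = \phi \circ \phi^n$ then yields $(\phi^{n+1})^* \neq (\phi^n)^* \circ \phi^*$. But algebraic stability forces $(\phi^{n+1})^* = (\phi^*)^{n+1} = (\phi^n)^* \circ \phi^*$, the desired contradiction.

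The main obstacle is the factorisation criterion itself: the bookkeeping needed to show that the discrepancy between $\phi^* \psi^*$ and $(\psi \circ \phi)^*$ is supported \emph{exactly} on exceptional classes coming from $\phi$-contracted curves landing in $I(\psi)$, and nothing else, is delicate --- one must track multiplicities, handle possibly reducible contracted components, and justify that contributions away from these interacting loci really do match on both sides. A secondary technical point is verifying that the minimality of $n$ in the destabilising orbit truly ensures $\phi^n$ extends across a cofinite subset of $C$, so that the statement `$\phi^n$ contracts $C$ to $p_{n-1}$' is literal and legitimately feeds into the criterion.
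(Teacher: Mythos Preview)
The paper does not supply its own proof of this proposition; it is quoted as background with citations to Fornaess--Sibony and to Diller--Favre \cite[Theorem~1.14]{DF}. So there is nothing in the paper to compare your argument against directly.

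That said, your outline is essentially the standard proof one finds in \cite{DF}. The reduction to the two-map factorisation criterion $(\psi\circ\phi)^* = \phi^*\psi^*$ iff no $\phi$-contracted curve lands in $I(\psi)$ is exactly the engine Diller--Favre use, and your two implications are organised the same way: induction on $n$ via $\phi^{n+1}=\phi^n\circ\phi$ for one direction, and a minimal-length destabilising orbit feeding $\phi^{n+1}=\phi\circ\phi^n$ for the other. Your identification of the delicate point --- proving the factorisation criterion itself by tracking exceptional divisor classes on a common resolution --- is accurate; this is Lemma~1.13 in \cite{DF} (or the corresponding passage in \cite{FS2}), and the bookkeeping there is precisely the multiplicity-matching you describe. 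The secondary worry about $\phi^n$ extending across a cofinite subset of $C$ is handled by the minimality of $n$: since $p_0,\dots,p_{n-2}\notin I(\phi)$, the map $\phi^{n-1}$ is regular at $p_0$, hence regular on a neighbourhood, and $\phi$ already contracts $C\setminus I(\phi)$ to $p_0$.
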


\makeatletter
\let\the@thm\thethm
\let\the@prop\theprop
\let\the@cor\thecor
\let\the@lem\thelem
\makeatother

\renewcommand*{\thethm}{\Alph{thm}}
\renewcommand*{\theprop}{\Alph{prop}}
\renewcommand*{\thecor}{\Alph{cor}}
\renewcommand*{\thelem}{\Alph{lem}}

It was the idea of Diller and Favre \cite{DF} to not settle for an unstable map, but to find a birational change of coordinates $\pi : Y \dashto X$ such that the conjugate map $\psi = \pi \circ \phi \circ \pi^{-1} : Y \dashto Y$ is algebraically stable. We call $\pi : (\psi, Y) \dashto (\phi, X)$ an \emph{algebraic stabilisation}, and $\phi$ \emph{potentially algebraically stable}.  For the reasons above, it is highly desirable to understand when and how algebraic stabilisation can be achieved for a given rational map. 

 Since Diller and Favre showed that birational surface maps can always be stabilised \cite{DF}, there have been few further results in this direction for large classes of maps. There are neither many positive results, nor many counterexamples, and essentially all of them involve monomial or polynomial maps \cite{Fav, FJ11, JW, DL}. 
 A more detailed history is given below.
 
The purpose of this article is to address the matter of algebraic stabilisation for the class of maps in two dimensions called \emph{skew products}; classically these are mappings of the form 
\[\phi : (x, y) \longmapsto (\phi_1(x), \phi_2(x, y)).\]
 More generally, a map $\phi : X \dashto X$ on a surface $X$ is a skew product iff there is a (dominant rational) fibration $h : X \dashto B$ to a curve, and a map $\phi_1 : B \to B$ such that the following diagram commutes:
\[
\begin{tikzcd}
 X \arrow[dashed]{r}{\phi} \arrow[dashed, swap]{d}{h} & X \arrow[dashed]{d}{h} \\
 B \arrow[swap]{r}{\phi_1} & B
\end{tikzcd}
\]

DeMarco and Faber \cite{DeF1, DeF2} studied the special case where $\phi_1 = \id$ on $\mathbb D \times \P^1$, proving that there exists a (possibly singular) surface $\hat X$ and a birational morphism $\pi : (\hat \phi, \hat X) \to (\phi, \mathbb D \times \P^1)$ such that $\hat \phi$ satisfies the geometric criterion for algebraic stability. One might call this a `geometric stabilisation'. 
The class of skew products is an intriguing case for understanding algebraic stabilisation because the dynamical degree of any potentially algebraically stable map is always an algebraic integer, whereas the dynamical degree of a skew product on a surface is always an \emph{integer} \cite{DinhNg, Truong}, \[\lambda_1(\phi) = \max\set{\deg(\phi_1), \deg_y(\phi_2)}.\]
 One might hope therefore that all skew products are potentially algebraically stable. 
In fact, the picture is more complicated, as demonstrated by our main results.

\begin{thm}\label{thm:intro:stabskew}
 Let $h : X \dashto B$ be a birationally ruled surface over a curve $B$, and $\phi : X \dashto X$ be a rational skew product over $B$ such that the base map $\phi_1 : B \to B$ has no superattracting periodic points.
 
 Then there is a smooth surface $\hat X$ and a birational morphism $\pi : (\hat \phi, \hat X) \to (\phi, X)$, blowing up $X$ finitely many times, such that the lift $\hat \phi : \hat X \dashto \hat X$ is algebraically stable.
\end{thm}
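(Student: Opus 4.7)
My plan is to reduce the question to a non-Archimedean dynamical problem over the Berkovich projective line, as the abstract foreshadows. First I would fix the geometry: since $X$ is birationally ruled, after an initial birational modification I may assume $h : X \to B$ is a morphism with general fibre $\P^1$. By \autoref{thm:intro:equivalence}, algebraic stability fails only along destabilising orbits, and on a skew product over such a ruled surface any such orbit is forced to lie over a periodic cycle of $\phi_1$: a contracted curve must either be a fibre or map degenerately to a point, and in either case the forward orbit of that point in $B$ must close up to accommodate the indeterminate endpoint of the orbit. Up to replacing $\phi$ by an iterate I may therefore treat one fixed point $p \in B$ of $\phi_1$ at a time; only finitely many cycles of $\phi_1$ will turn out to contribute.

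Second, I would translate the local problem near $p$ into non-Archimedean dynamics. Completing the local ring at $p$ and passing to the algebraic closure of its fraction field yields the Puiseux series field $\K$, and the fibre-direction dynamics of $\phi$ become a rational self-map $\phi_* : \P^1(\K) \to \P^1(\K)$ extending to $\P^1_\an(\K)$. The dictionary underpinning the argument is that finite sequences of blow-ups of $X$ along fibral centres over $p$ correspond bijectively to finite sets of Berkovich type II points in $\P^1_\an(\K)$, and that algebraic stability of the blown-up map is equivalent to this set being $\phi_*$-pullback-stable and containing the Berkovich data associated to the indeterminacy and contracted curves of $\phi_*$. Starting from this initial data and iteratively adjoining $\phi_*$-preimages defines the \minimalalgname{}.

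The hypothesis that $\phi_1$ has no superattracting periodic point translates, via this dictionary, to a \dilating{} property at the canonical type II point of $\P^1_\an(\K)$ fixed by $\phi_*$, and the Fatou-Julia theory for $\phi_*$ then supplies the structural control needed to understand orbits of type II points under pullback. The main obstacle will be termination of the algorithm: a priori iterated pullback can produce infinitely many type II points. The core of the argument would be to use the \dilating{} hypothesis to show that, outside a bounded neighbourhood of the fixed Berkovich point, pullback strictly contracts a suitable tree distance, and to use the Fatou-Julia dichotomy to rule out Julia-theoretic cascades that could otherwise generate an infinite family. Once termination is established the local blow-ups patch together into a smooth $\hat X$ with a birational morphism $\pi : \hat X \to X$; smoothness is preserved since each centre is smooth, and algebraic stability holds globally because every destabilising orbit has been accounted for in one of the local constructions.
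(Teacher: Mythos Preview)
Your outline has the right large-scale architecture (reduce to periodic fibres, pass to $\P^1_\an(\K)$, use Fatou--Julia theory), but the mechanism you propose for termination is precisely the one the paper rules out. You suggest that the no-superattracting hypothesis yields a \emph{\dilating{}} property making pullback a contraction on a tree metric, so that iterated preimages stabilise. In fact the hypothesis gives scale factor $\q=1$, i.e.\ a \emph{simple} skew product, and the induced $\phi_*$ is in general highly chaotic on $\bH$; the paper explicitly contrasts this with the Favre--Jonsson setting where $\phi_*$ \emph{is} essentially contracting. There is no contraction to exploit. What the hypothesis actually buys is (a) that $\phi_*$ does not increase generic multiplicity (\autoref{prop:skewstab:multiplicitydivides}), and (b) the classification of Fatou components together with the no-wandering-domains theorem, so that every Type~II Julia point is preperiodic (\autoref{cor:skewstab:typeiijuliaperperiodic}). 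These are the inputs, not contraction. Relatedly, the relevant dynamics on vertex sets is \emph{forward} iteration of $\phi_*$, not pullback: analytic stability (\autoref{prop:stabskew:analyticallystable}) asks that no vertex be sent forward into a J-domain.

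You also underestimate the real difficulty, which is reconciling \emph{smoothness} of the model with analytic stability of the vertex set. Saying ``smoothness is preserved since each centre is smooth'' misses the point: one must produce a vertex set $\Gamma$ that is simultaneously smooth (in the sense of \autoref{thm:skewstab:smoothness}) and analytically stable, and the two operations---taking a smooth $m_0$-convex hull, then adding forward iterates---interfere with each other. The paper's proof of \autoref{thm:extendvertexset} is an intricate alternating procedure with bookkeeping devices (persistent F-disks with explicit axioms, the No-Folding Lemma~\ref{lem:nofolding}) and a delicate case analysis to show the alternation terminates; none of this is visible in your sketch. Two smaller points: the reduction to periodic fibres is not automatic---destabilising orbits can lie over wandering or strictly preperiodic points of $\phi_1$ and must be pushed into periodic fibres by preliminary blow-ups (\autoref{thm:skewstab:nonperiodic}); and the paper deliberately avoids replacing $\phi$ by an iterate, which your outline invokes.
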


The condition that $\phi_1$ has no superattracting cycles turns out to be necessary. 

\begin{thm}\label{thm:intro:skewcounter}
 Consider the map \[\psi : (x, y) \longmapsto \left((1-x)x^2, (1-x)(x^4y^{-3} + y^3)\right)\] as defined on $\P^1 \times \P^1$. 
 There is no birational map $\rho: X \dashto \P^1 \times \P^1$ conjugating $\psi$ or any of its iterates to an algebraically stable map, even if $X$ is allowed to be singular.
\end{thm}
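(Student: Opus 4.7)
The plan is to argue by contradiction via a non-Archimedean reformulation. Suppose, for some $n \geq 1$, there is a birational map $\rho : X \dashto \P^1 \times \P^1$ conjugating $\psi^n$ to an algebraically stable map $\tilde\psi^n : X \dashto X$, with $X$ possibly singular. Since the obstruction is localised at the superattracting fixed point $x = 0$ of $\phi_1$, we pass to the Puiseux series field $\K = \bigcup_m \mathbb{C}((x^{1/m}))$ equipped with the valuation $v(x) = 1$ and study the induced non-Archimedean skew product $\psi_* : \P^1_\an(\K) \to \P^1_\an(\K)$ on the Berkovich analytification of the generic fibre of $h$ above $0$. The model--tree dictionary from the paper's framework attaches to $X$ a finite set $F \subset \P^1_\an(\K)$ of type-II points, and algebraic stability of $\tilde\psi^n$ on a smooth resolution of $X$ (via Proposition 0) forces $F$ to contain every Julia periodic point of $\psi_*^n$.

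The second step is to compute $\psi_*$ on the skeleton of $\P^1_\an(\K)$ along $y = 0$. For the type-II point $\zeta_\alpha$ with $\alpha := v(y) \in \mathbb{R}$, the formula $y \mapsto (1-x)(x^4 y^{-3} + y^3)$ together with $v(1-x) = 0$ yields
\[ v(\psi_*(\zeta_\alpha)) \;=\; \min(3\alpha,\; 4-3\alpha), \]
a tent-like piecewise linear self-map of the skeleton with two branches $L(\alpha) = 3\alpha$ on $\alpha \leq 2/3$ and $R(\alpha) = 4 - 3\alpha$ on $\alpha \geq 2/3$. It has a repelling fixed point at $\alpha = 1$ of slope $-3$, a repelling period-$2$ orbit $\{2/5,\, 6/5\}$ of multiplier $-9$, a repelling period-$3$ orbit $\{1/7, 3/7, 9/7\}$ of multiplier $-27$, and, by coding orbits in the symbols $\{L, R\}$, infinitely many disjoint repelling periodic orbits of type-II points in the Julia set of $\psi_*$. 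Since $\psi_*^n$ inherits every $\psi_*$-periodic orbit, the infinite collection of Julia periodic points persists under iteration, so no finite $F$ can contain them all, contradicting the stability assumption.

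The main obstacle will be the rigorous execution of Step 1 for singular $X$: establishing the model--tree dictionary in the possibly singular setting, and translating Proposition 0 into the concrete statement that $F$ must capture every Julia periodic type-II point of $\psi_*^n$. One must verify in particular that each repelling periodic type-II point of $\psi_*^n$ genuinely furnishes a destabilising orbit on the associated model, so that no singular degeneration of $X$ can evade the obstruction by collapsing infinitely many periodic orbits into a single non-reduced point. Once these foundations are in place, the contradiction reduces to the elementary piecewise-linear dynamical computation above.
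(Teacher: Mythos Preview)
Your central claim in Step~1---that algebraic stability of $\tilde\psi^n$ forces the finite vertex set $F$ to contain \emph{every} Julia periodic type-II point of $\psi_*^n$---is false, and the paper's own Theorem~5.2 furnishes a direct counterexample. The map $\phi(x,y) = (x^2,\, x^4 y^{-3} + y^3)$ induces the \emph{same} dynamics on the segment $[0,\infty]\subset\P^1_\an$ over $x=0$ as $\psi$ does (since $(1-x)$ is a unit in $\k[[x]]$), hence has the same infinite supply of repelling periodic skeleton points; yet $\phi$ \emph{is} algebraically stable on a Hirzebruch surface whose vertex set is a single point. Analytic stability only requires that each $\zeta\in\Gamma$ map either into $\Gamma$ or into an F-domain; a Julia periodic point lying outside $\Gamma$ sits in a J-domain but causes no harm unless some vertex of $\Gamma$ is sent there. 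So infinitely many Julia periodic points is not, by itself, an obstruction.

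There is also a computational slip: because $x=0$ is superattracting of local degree $2$, the skew product $\psi_*$ has scale factor $q=\tfrac12$, and the induced tent map on the skeleton has slopes $\pm\tfrac32$, not $\pm 3$; the fixed point is at $t=\tfrac45$, not $t=1$. This does not rescue the argument but indicates the role of the scale factor has been overlooked.

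The paper's proof works differently. The obstruction does not live in the fibre over $x=0$ alone: it comes from the \emph{infinite backward orbit} of the extra preimage $x=1$ of $0$. Over each $c\in\psi_1^{-N}(1)$ the full fibre $\{x=c\}\cong\P^1$ corresponds to the Gauss point, and one computes that the chain $\psi_*^{N+1}$ sends this Gauss point to $\zeta(0,|x|)$ in $\P^1_\an$ over $x=0$. The crucial dynamical input is that $\zeta(0,|x|)$ is Julia and \emph{not preperiodic} (its parameter $t=1$ has infinite orbit under the slope-$\tfrac32$ tent map, as the $2$-adic denominator strictly grows). A vertex whose image is a \emph{wandering} Julia point is always destabilising, by Lemma~5.3/5.4. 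Since any birational $\rho$ can collapse only finitely many fibres, some $\{x=c\}$ survives intact on $X$ and is therefore a destabilising curve. Your periodic-point strategy misses this: the factor $(1-x)$ was inserted precisely to produce the infinite tail of preimage fibres that no birational modification can absorb.
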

  
We highlight that the surface $\hat X$ in \autoref{thm:intro:stabskew} is smooth. 
 Simultaneously establishing smoothness of $\hat X$ and algebraic stability of $\hat \phi$ is a major objective of this paper that significantly complicates the proof of \autoref{thm:intro:stabskew}. By way of contrast, the (geometric) stabilisation procedure of DeMarco and Faber generally results in a singular surface. In fact, they asked in \cite[Remark 1.4]{DeF2} whether one could obtain a smooth model in general, so \autoref{thm:intro:stabskew} answers their question positively. 
In general, it is unclear whether attaining the geometric criterion for algebraic stability on a singular surface results in an algebraically stable map in the original functorial sense of \autoref{eqn:AS}. Only in the presence of this do certain dynamical calculations simplify. 
Therefore it is important to obtain a geometric stabilisation $\pi : (\psi, Y) \dashto (\phi, X)$ where $Y$ is smooth.

We also stress that our stabilisation procedure succeeds for $\phi$ rather than requiring us to first replace $\phi$ with an iterate $\phi^N$. In the case of monomial maps \cite{Fav} and plane polynomial maps \cite{FJ11} Favre and Jonsson found it necessary to sometimes pass to an iterate to obtain a smooth stabilisation.

Our specific procedure arrives at an algebraic stabilisation $\pi : (\hat \phi, \hat X) \to (\phi, X)$ through a sequence of point blowups on the surface $X$. This guarantees both that $\hat X$ is smooth, and $\pi$ is a birational morphism. Conversely, recall that any birational morphism of smooth surfaces can be decomposed into individual point blowups. The technical argument that the process successfully terminates is in no way effective, and in most cases requires a very large number of point blowups; it may be impractical to implement on a given example. However, the author showed in \cite{stability} that whenever \emph{any} such stabilisation exists, one can construct a \emph{minimal stabilisation} by a much more straightforward process, repeatedly blowing up destabilising orbits until none remain. The corollary below follows directly from \autoref{thm:intro:stabskew} and \cite[Theorem 2]{stability}.

\begin{cor}\label{cor:intro:minskew}
Let $h : X \dashto B$ be a birationally ruled surface and $\phi : X \dashto X$ be a rational skew product over $B$ such that $\phi_1 : B \to B$ has no superattracting periodic points. Then the Minimal Stabilisation Algorithm on $(\phi, X)$ terminates, producing the (unique) minimal stabilisation $\pi : (\hat \phi, \hat X) \to (\phi, X)$.
\end{cor}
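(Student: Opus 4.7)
The plan is to show that \autoref{cor:intro:minskew} follows as a short deduction from the existence half of \autoref{thm:intro:stabskew} combined with the abstract termination criterion of \cite[Theorem 2]{stability}. No new dynamics or combinatorics are needed beyond invoking these two results; the role of the corollary is to upgrade the existential stabilisation of \autoref{thm:intro:stabskew} to the concrete, algorithmic stabilisation produced by the \minimalalgname{}.

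First I would apply \autoref{thm:intro:stabskew} to the data $(\phi, X, h, B)$. This furnishes \emph{some} birational morphism $\pi : (\hat \phi, \hat X) \to (\phi, X)$ with $\hat X$ smooth and $\hat \phi$ algebraically stable, obtained by a finite sequence of point blowups above $X$. In particular, this witnesses that $(\phi, X)$ is potentially algebraically stable via a birational morphism, which is precisely the hypothesis required to invoke \cite[Theorem 2]{stability}.

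Next I would feed this into \cite[Theorem 2]{stability}, whose statement is exactly that whenever there exists some algebraically stable model dominated by $X$ through a birational morphism, the \minimalalgname{} starting from $(\phi, X)$ terminates in finitely many steps and outputs the unique minimal such stabilisation. Applied to our situation, this gives a minimal stabilisation $\pi : (\hat \phi, \hat X) \to (\phi, X)$, which is the conclusion of \autoref{cor:intro:minskew}.

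There is essentially no obstacle beyond correctly matching the hypotheses of \cite[Theorem 2]{stability}: one only needs to verify that the stabilisation produced by \autoref{thm:intro:stabskew} is of the kind the algorithm is designed to dominate (a birational morphism obtained by blowing up points, with $\hat X$ smooth), which is exactly what \autoref{thm:intro:stabskew} guarantees. Uniqueness of the minimal model, if desired, is inherited directly from \cite[Theorem 2]{stability}, and requires no further argument on our side.
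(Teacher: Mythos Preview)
Your proposal is correct and matches the paper's own approach exactly: the paper states that \autoref{cor:intro:minskew} ``follows directly from \autoref{thm:intro:stabskew} and \cite[Theorem 2]{stability}'' without giving any further argument. Your write-up simply makes explicit the two steps implicit in that sentence.
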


There are several major contributions leading to the proofs of \autoref{thm:intro:stabskew}, \autoref{thm:intro:skewcounter} and our understanding of algebraic stability for skew products. We give an overview of these in the following three paragraphs; further discussion will be provided in the relevant sections. 
In \autoref{sec:general} we reduce the problem to analysing $\phi$ on periodic fibres of $X$. 
 The action of $\phi$ on divisors within a (say fixed) fibre $X_b$ induces a map we call a \emph{non-Archimedean skew product} $\phi_* : \P^1_\an(\K) \to \P^1_\an(\K)$ on the Berkovich projective line. Here, each surface $Y$ given by modifying $X$ over the fixed fibre corresponds to a finite subset $\Gamma = \Gamma(Y) \subset \P^1_\an$. Additionally, the lift $\psi : Y \dashto Y$ of $\phi$ can be understood through the action of $\phi_* = \psi_*$ on $\Gamma$ and the components of $\P^1_\an\sm\Gamma$.
 
In \autoref{sec:vertex}, we recount results from \cite[\S 4]{thesis} which make precise the relationship between $Y$ and $\Gamma(Y)$. In particular, we spell out properties of $\Gamma$ that characterise smoothness for $Y$ and relate algebraic stability for the lift $\psi : Y \dashto Y$ to the behaviour of the iterates $\phi_*^n$ on $\Gamma$. 
This is somewhat similar to what was done by Favre and Jonsson in \cite{FJ07, FJ11}. However, the analogue of $\phi_*$ in their setting is essentially a contraction mapping. 
On the contrary, in our context, the corresponding dynamics of $\phi_*$ is often quite chaotic, so we are prevented from using the techniques of Favre and Jonsson.

The second ingredient is a Fatou-Julia theory for skew products such as $\phi_*$, which is developed at length (and in greater generality) in \cite[\S 3]{thesis}. In particular, the classification of Fatou components for a non-Archimedean skew product $\phi_*$ \cite[Theorem M]{thesis} is fundamental to our method to prove \autoref{thm:intro:stabskew}. See \cite{NZ} for an independent parallel treatment. At least when the fixed fibre is not superattracting, the dynamical theory of $\phi_*$ is remarkably close to the one developed by Rivera-Letelier \cite{RL2, RL1} and Benedetto \cite{Bene0, Bene2, Bene3} for rational maps on the Berkovich projective line $\P^1_\an$. However, skew products have a more flexible structure which does not permit use of the algebraic techniques utilised with Berkovich rational maps.
In general, the behaviour of Julia points leads to an explanation of when a skew product is potentially algebraically stable, informing the definition of counterexamples such as \autoref{thm:intro:skewcounter}. This is discussed in \autoref{sec:skewcounter}.

Finally, to prove \autoref{thm:intro:stabskew} in \autoref{sec:thetrickybit}, we perform a rather intricate iterative procedure to manufacture a finite vertex set $\Gamma \subset \P^1_\an$ with two qualities which are difficult to reconcile: A) It has a large enough number of points, distributed geometrically, so that the corresponding surface $\hat X$ is smooth. 
B) Its points iterate well under $\phi_*$, each either remains in $\Gamma$ forever or leaves and never comes close to returning.

Before closing the introduction, we wish to review some previous results with regard to algebraic stabilisation.
Diller and Favre showed \cite{DF} that any birational self-map of a surface $\phi : X \dashto X$ is potentially algebraically stable, moreover their stabilisation is a birational morphism $\pi : \hat X \to X$. 
 Favre \cite{Fav} classified which monomial maps on $\P^2$ can be stabilised, depending on whether the integer $2 \times 2$ matrix defining it is a rational or irrational rotation of the plane; see also Jonsson and Wulcan \cite{JW}. This provided the first negative example, where an algebraic stabilisation does not exist. Later, Diller and Lin \cite{DL} gave geometric criteria for potential stability for maps which preserve a two-form as monomials do. 
 Favre and Jonsson \cite{FJ04, FJ07, FJ11} proved that for each polynomial map $f$ in two variables, there exists a compactification of $\C^2 \hookrightarrow X$ and an $N \in \N$ such that $(f^{N+n})^* = (f^N)^*(f^n)^* =  (f^N)^*(f^*)^n$ for every $n \in \N$; in particular, $f^N$ is algebraically stable. This `eventual' algebraic stability is sufficient for dynamical applications. Following Favre and Jonsson, Gignac and Ruggiero \cite{GR1, GR2} also considered the stabilisation of germs on blowups over a point. More recently, in \cite{Abboud} Abboud extended \cite{FJ07} to endomorphisms of affine surfaces. As mentioned above, DeMarco and Faber showed that maps on $\P^1\times\P^1$ of the form, $(t, R(t, z))$, where $R \in \C(t, z)$ have a (geometric) stabilisation \cite{DeF1, DeF2}. 
 Algebraic stability has been studied for particular families of rational maps by Bedford, Kim, et al.\ \cite{BK06, BKTAM, BK10}, with applications to integrable systems. Algebraic stability for correspondences has been studied, initially by Ramadas \cite{Ram1, Ram2}. Recently, Weinreich showed that an algebraic billiards correspondence is potentially algebraically stable \cite{Weinreich2}.

\makeatletter
\let\thethm\the@thm
\let\theprop\the@prop
\let\thecor\the@cor
\let\thelem\the@lem
\makeatother

\setcounter{thm}{0}

%%%%%%%

\section*{Acknowledgements}

The author is most grateful to Jeffrey Diller for many fruitful conversations throughout this project, and his expertise in exposition. We also thank Robert Benedetto, Laura DeMarco, Xander Faber, Mattias Jonsson, Nicole Looper, Eric Riedl, Roland Roeder, and Max Weinreich for valuable discussions and comments.

%%%%%%%
 
% !TEX root = Algebraic Stability for Skew Products.tex

{
\section{Background and Preliminaries}\label{sec:background}
}

%%%

Assume for the rest of this article that all surfaces are projective varieties over an algebraically closed field $\k$. 
We will use dashed arrows $\phi : X \dashto Y$ to denote a rational map, and a solid arrow $\phi : X \to Y$ to mark one that we are sure is a morphism.

We briefly recall the definitions of indeterminate point, exceptional curve, and destabilising orbit; for a more detailed discussion, see \cite{stability}.

Let $X, Y$ be surfaces and $\phi : X \dashto Y$ a rational map. Let $U$ be the largest (open) set on which $\phi : U \to Y$ is a morphism, then we define the \emph{indeterminate set} as $I(\phi) = X \sm U$. Alternatively, these are the finitely many points at which $\phi$ cannot be continuously defined. After blowing up the indeterminate points finitely many times we obtain a \emph{graph} of $\phi$, $\pi_1 : \Gamma_\phi \to X$ whence $\phi$ lifts to $\pi_2 : \Gamma_\phi \to Y$. Now, for any $p \in X$ one can naturally define its image $\phi(p)$ as $\pi_2(\pi_1^{-1}(p))$; when $p \in I(\phi)$ and $X$ is a surface, this image will be a curve. Similarly we can generalise $\phi^{-1}$ as $\pi_1 \circ \pi_2^{-1}$.
The \emph{proper transform} of a curve $C$ by $\phi$ is $\overline{\phi(C\sm I(\phi))}$. 
An irreducible curve $C \subset X$ is said to be \emph{exceptional} or \emph{contracted} by $\phi$ iff its proper transform is a point in $p \in Y$; in this case $C \subseteq \phi^{-1}(p)$. We define the \emph{contracted set}, $\E(\phi)$, of $\phi$ to be the union of all (the finitely many irreducible) contracted curves in $X$. 

\begin{defn}
 We write $\rho : (\psi, Y) \dashto (\phi, X)$ to indicate that $\rho : Y \dashto X$ is a birational map conjugating $\phi : X \dashto X$ to $\psi = \rho^{-1} \circ \phi \circ \rho : Y \dashto Y$. When $\psi : Y \dashto Y$ is algebraically stable, we say that $\rho$ is a \emph{stabilisation} and that it \emph{stabilises} $\phi$. Given a rational map $\phi : X \dashto X$, we may say that $\phi$ is \emph{potentially algebraically stable} iff a stabilisation exists.
\end{defn}

A contracted curve $C \subseteq \phi^{-1}(p)$ is a \emph{destabilising curve} for $\phi$ iff there is an $n \in \N$ such $\phi^{n-1}(p) \ni q$ and $q \in I(\phi)$. Then we call any irreducible component of $\phi(q)$ an \emph{inverse destabilising curve} of $\phi$. The geometric criterion \autoref{thm:intro:equivalence} for algebraic stability says that $\phi : X \dashto X$ is \emph{algebraically stable} iff there are no destabilising curves for $\phi$. The \emph{destabilising orbit} here is $p, \phi(p)\dots, \phi^{n-1}(p)$ and its \emph{length} is $n$. Often, as in this \thesisarticle{thesis}{article}, we may assume that the terms $p, \phi(p)\dots, \phi^{n-1}(p)$ in a destabilising orbit are closed points, rather than allow e.g.\ $\phi^{n-1}(p)$ to be a curve containing an indeterminate point $q$. As discussed in \cite{stability}, if a destabilising orbit exists of the latter kind then it contains a `minimal' destabilising orbit of closed points. For further discussion on the equivalent geometric criterion for algebraic stability, the author recommends Roeder \cite{roeder}.

%%%

\begin{defn}
  We say that $X$ is a \emph{birationally ruled surface over $B$}, iff $X$ is a projective surface (a projective $2$-dimensional variety over $\k$) with a dominant rational map $h : X \dashto B$ such that $h^{-1}(b) \cong \P^1$ for all but finitely many $b \in B$.
\end{defn}

This loose definition turns out to be equivalent to saying that $X$ is birational to the product $B \times \P^1$ where $h$ becomes the projection to the first factor, see \cite[V.2.2]{Hart}.

\begin{defn}
We say $\phi: X \dashrightarrow X$ is a \emph{skew product over $B$} if and only if $\phi$ is a dominant rational map, and there is a dominant rational map with connected fibres $h : X \dashto B$ such that the following diagram of rational maps commutes. If $h$ is a birational ruling then we might emphasise this by saying $\phi$ is a \emph{rational skew product}.
\[
\begin{tikzcd}
 X \arrow[dashed]{r}{\phi} \arrow[swap, dashed]{d}{h} & X \arrow[dashed]{d}{h} \\
 B \arrow[swap]{r}{\phi_1} & B
\end{tikzcd}
\]
\end{defn}

{The map $\phi_1$ must be a morphism since $B$ is $1$-dimensional. We also call $B$ the \emph{base curve} and say $\phi$ is a skew product over $B$.} 
 
\begin{prop}\label{prop:galois:betterskewprod}
 Suppose that $X$ is a surface, $B$ a curve, and $h : X \dashto B$ a rational map such that all but finitely many of the fibres of $h : X \dashto B$ are (possibly disconnected) rational curves. Let $\phi: X \dashrightarrow X$ be a rational map such that the following diagram commutes.
 \[
\begin{tikzcd}
 X \arrow[dashed]{r}{\phi} \arrow[swap, dashed]{d}{h} & X \arrow[dashed]{d}{h} \\
B \arrow[swap]{r}{\phi_1} & B
\end{tikzcd}
\]
Then after replacing $X$ with its smooth desingularisation $\tilde X$, we can also replace $B$ with a smooth curve $\tilde B$ and a fibration $\tilde h : \tilde X \dashto \tilde B$ which is a birational ruling of $\tilde X$ i.e. $\tilde h$ has \emph{connected fibres}, and the induced $\tilde \phi$ is a rational skew product over $\tilde B$. After further blowup, we may assume $\tilde h$ is continuous.
\end{prop}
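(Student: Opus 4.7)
My strategy is to perform a Stein-type factorisation at the level of function fields and then check that the skew product structure descends to the factor. Throughout, write $K = \k(X) = \k(\tilde X)$ for the common function field, where $\tilde X$ is any smooth desingularisation of $X$; the lifted rational map $\tilde \phi : \tilde X \dashto \tilde X$ automatically still fits into the skew product square over $B$. The field extension $K / \k(B)$ has transcendence degree one, and the algebraic closure of $\k(B)$ inside $K$ is a finite extension which I denote $L$. Set $\tilde B$ to be the smooth projective curve with $\k(\tilde B) = L$. The inclusion $\k(B) \hookrightarrow L$ yields a finite morphism $g : \tilde B \to B$, and the inclusion $L \hookrightarrow K$ yields a rational map $\tilde h : \tilde X \dashto \tilde B$ satisfying $h = g \circ \tilde h$.

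Next I would verify the two geometric properties required of $\tilde h$. By construction $L$ is algebraically closed in $K$, so the generic fibre of $\tilde h$ is geometrically integral, hence connected. On the other hand, the generic geometric fibre of $h$ is a disjoint union of $\P^1$'s whose connected components are permuted by $g$; so a generic fibre of $\tilde h$ is a single smooth projective genus-zero curve, which over the algebraically closed residue field at a generic closed point of $\tilde B$ is automatically $\cong \P^1$. Therefore $\tilde h$ is a birational ruling of $\tilde X$.

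To descend the skew product to $\tilde B$, I need $\phi^* : K \to K$ to preserve the subfield $L$. This is the crux, but it is quick: for any $\alpha \in L$ with minimal polynomial $p(t) \in \k(B)[t]$, the element $\phi^*(\alpha)$ satisfies $\phi^*(p)(\phi^*(\alpha)) = 0$, and since $\phi^*$ carries $\k(B)$ into itself (by the existing skew product hypothesis), $\phi^*(p)$ still lies in $\k(B)[t]$. Hence $\phi^*(\alpha)$ is algebraic over $\k(B)$ and therefore belongs to $L$. The restriction $\phi^*|_L : L \to L$ then corresponds to a rational map $\tilde \phi_1 : \tilde B \dashto \tilde B$, which is automatically a morphism since $\tilde B$ is a smooth curve, and commutativity of the refined skew product square follows by dualising back to the geometric side.

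Finally, to achieve continuity of $\tilde h$ I would resolve its indeterminate locus by finitely many further blowups of $\tilde X$, which is standard for rational maps from a smooth surface to a curve and does not disturb the skew product identity already arranged. The main thing to get right is the containment $\phi^*(L) \subseteq L$ — the genuine assertion that $\phi$ descends to $\tilde B$, rather than permuting the sheets of $g : \tilde B \to B$ non-trivially. This hinges on the algebraic-closure characterisation of $L$ together with the existence of $\phi_1$ on $B$; without a pre-existing base map one would only obtain a skew product over $B$ after an iterate, but here the hypothesis rules that complication out.
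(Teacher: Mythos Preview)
Your argument is correct and reaches the same conclusion, but the route to producing $\tilde\phi_1$ is genuinely different from the paper's. Both you and the paper obtain $\tilde B$ by Stein factorisation (you phrase it at the function-field level, taking $L$ to be the algebraic closure of $\k(B)$ in $K$; the paper invokes geometric Stein factorisation after first making $h$ a morphism). The divergence is in descending the self-map: the paper appeals to Tsen's theorem to produce a section $s:\tilde B\to\tilde X$ of the ruling and then simply \emph{defines} $\tilde\phi_1 = \tilde h\circ\tilde\phi\circ s$, whereas you argue directly that $\phi^*$ preserves $L$ because it preserves $\k(B)$ and hence carries algebraic elements to algebraic elements. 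Your method is more elementary in that it avoids Tsen, and it makes transparent exactly why the existence of $\phi_1$ on $B$ is what forces a base map on $\tilde B$ (your closing remark about ``permuting the sheets'' captures this nicely). The paper's approach, on the other hand, is more geometric and exploits the specific structure of a ruled surface; it also makes the continuity of $\tilde h$ immediate from the outset rather than a final cleanup. Either way the content is the same.
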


\begin{proof}
 First we replace $X$ with its smooth desingularisation $\tilde X$. We may further blowup $\tilde X$ until the fibration over $B$ is continuous; for notational simplicity, we will assume this for the rest of the proof. This modification $\rho : \tilde X \to X$ induces a similar diagram of rational maps by conjugation $\tilde \phi = \rho \circ \phi \circ \rho^{-1}$. 
 Now we have a fibration $h' = h \circ \rho : \tilde X \to B$ with $\tilde X$ smooth. 
Then by Stein Factorisation there is a curve $\tilde B$, a morphism $\tilde h : \tilde X \to \tilde B$ and a morphism $g : \tilde B \to B$ such that $\tilde h$ has connected fibres and $g$ is finite. Since $\tilde X$ is smooth and the fibres are connected, $\tilde B$ must already be smooth. Now $\tilde h : \tilde X \to \tilde B$ is a birationally ruled surface and so it has a section $s : \tilde B \to \tilde X$ by Tsen's Theorem \cite[\S V.2]{Hart}. 
 \[
\begin{tikzcd}
 \tilde X \arrow[dashed]{r}{\tilde \phi} \arrow[swap]{d}{\tilde h} & \tilde X \arrow{d}{\tilde h} \\
\tilde B \arrow[swap]{d}{g} \arrow[swap, red]{r}{\tilde \phi_1} \arrow[blue, bend left = 60]{u}{s}& \tilde B \arrow{d}{g}\\
B \arrow[swap]{r}{\phi_1} & B
\end{tikzcd}
\]
Therefore the map on $\tilde B$ we need to construct $\tilde \phi_1 : \tilde B \to \tilde B$ is given by $\tilde h \circ \tilde \phi \circ s$, and the whole diagram above commutes.
\end{proof}

\begin{rmk}
  \autoref{prop:galois:betterskewprod} shows that we may take considerably weaker hypotheses (for \autoref{thm:intro:stabskew}) than that of a smooth surface and a skew product, and arrive at one by blowing up $X$ finitely many times and possibly replacing our base curve $B$. We can guarantee that such $X$ and $B$ are smooth, and $h : X \to B$ is continuous with connected fibres. Unless otherwise stated, we will assume this is the situation for all rational skew products for the remainder of this \thesisarticle{chapter}{article}.
\end{rmk}
 
 \begin{lem}\label{fibretofibre}
 Let $\phi : X \dashto X$ be a skew product over $B$.
\begin{itemize}
 \item If $\phi$ contracts the curve $C$ in $X$, then $C \subseteq h^{-1}(z)$ for some $z \in B$.
 \item If $\phi(p) = C$ (i.e.\ $p \in I(f)$ is indeterminate), then $C \subseteq h^{-1}(z)$ for some $z \in B$.
\end{itemize}
 \end{lem}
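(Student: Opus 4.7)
My plan is to base both parts on the commutativity $h\circ\phi = \phi_1\circ h$ together with the fact that $\phi_1:B\to B$ is a finite morphism (being a nonconstant morphism of curves), so preimages of points under $\phi_1$ are finite. By the Remark preceding the lemma, I may assume $h:X\to B$ is a \emph{morphism} with connected fibres and that $X$ is smooth.

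\textbf{First bullet.} Suppose $\phi$ contracts the irreducible curve $C$ to the point $q\in X$. The map $\phi$ is defined on the Zariski-dense open set $C^\circ := C\setminus I(\phi)$, and $\phi(C^\circ)=\{q\}$, so $h\circ\phi(C^\circ)=\{h(q)\}$. By commutativity of the skew-product square, $\phi_1(h(C^\circ))=\{h(q)\}$, i.e.\ $h(C^\circ)\subseteq \phi_1^{-1}(h(q))$. Since $\phi_1$ is a nonconstant morphism of curves, this preimage is a finite set of points. But $C^\circ$ is irreducible and $h$ is continuous, so $h(C^\circ)$ is connected and thus consists of a single point $z\in B$. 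Taking closures, $C\subseteq h^{-1}(z)$.

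\textbf{Second bullet.} Let $p\in I(\phi)$ with $\phi(p)=C$. Let $\pi_1:\Gamma_\phi \to X$ and $\pi_2:\Gamma_\phi \to X$ be the two projections from the graph of $\phi$, both of which are morphisms; we may assume $\Gamma_\phi$ is smooth by further blowing up. By the definition of $\phi$ on indeterminate points, there is an irreducible curve $E\subseteq \pi_1^{-1}(p)$ with $\pi_2(E)=C$. The compositions $h\circ\pi_1$ and $h\circ\pi_2$ are then morphisms $\Gamma_\phi\to B$. On the dense open subset of $\Gamma_\phi$ where $\pi_1$ is an isomorphism onto $X\setminus I(\phi)$, the commutativity of the skew-product square gives
\[
  h\circ\pi_2 \;=\; h\circ\phi\circ\pi_1 \;=\; \phi_1\circ h\circ\pi_1.
\]
Since both $h\circ\pi_2$ and $\phi_1\circ h\circ\pi_1$ are morphisms to the separated variety $B$ and they agree on a dense open, they agree everywhere on $\Gamma_\phi$. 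Applying this equality to $E$, whose image under $\pi_1$ is the single point $p$, we obtain
\[
  h(C)\;=\;h(\pi_2(E))\;=\;\phi_1(h(\pi_1(E)))\;=\;\phi_1(h(p)),
\]
a single point $w\in B$. Hence $C\subseteq h^{-1}(w)$.

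Neither step is especially hard; the only subtlety is making sure that the diagrammatic identity $h\circ\phi = \phi_1\circ h$, which a priori is an identity of \emph{rational} maps, lifts to an actual equality of morphisms on a suitable resolution so that one can legitimately evaluate it on the exceptional curve $E$. This is handled by passing to the graph $\Gamma_\phi$ and invoking that morphisms to a separated variety agreeing on a dense open must coincide.
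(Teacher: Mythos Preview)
Your proof is correct and follows essentially the same idea as the paper's: both parts exploit the commutativity $h\circ\phi=\phi_1\circ h$ together with the fact that $\phi_1$ sends points to points. The paper's argument for the first bullet is phrased as a contradiction (if $h(C)=B$ then $\phi_1$ is constant, contradicting dominance of $\phi$), whereas you observe directly that $h(C^\circ)\subseteq\phi_1^{-1}(h(q))$ is finite and connected; these are equivalent. For the second bullet the paper is very terse, simply writing $h(C)=h(\phi(p))=\phi_1(h(p))$; your passage to the graph $\Gamma_\phi$ and invocation of separatedness makes explicit the justification the paper leaves implicit.
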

 
\begin{proof}
 Suppose $\phi$ contracts the curve $C$ in $X$. Either $h(C) = z$ or $h(C) = B$, since $h$ is proper. 
 Let $\phi(C \sm I(\phi)) = p$ and $w = h(p) \in B$, then \[w = h(p) = h(\phi(C)) = \phi_1(h(C)) = \phi_1(B).\] Therefore $h(\phi(p)) = \phi_1(h(p)) = w$ for every $p \in X$, i.e.\ $\phi(X) \subseteq h^{-1}(w)$, so $\phi$ is not dominant $\contra$.
 
 Similarly, if $\phi(p) = C$ and $h(C) = B$, then $h(C) = h(\phi(p)) = \phi_1(h(p))$, a closed point in $B$ (given $\phi_1, h$ are continuous).
\end{proof}

\autoref{fibretofibre} states that a skew product has all its exceptional curves contained in fibres. This prompts the next definition.

\begin{defn}
 Let $\phi : X \dashto X$ skew product over the fibration $h : X \to B$, let us write $\check I(\phi)$ for $h(I(\phi))$ and $\check \E(\phi)$ for $h(\E(\phi))$
\end{defn}

\begin{prop}
Let $\phi : X \dashto X$ be a rational skew product over $B$. If $C$ is a destabilising curve for a $\phi$, then $h(C) = b \in \check \E(\phi)$ and $\phi_1^n(b) \in \check I(\phi)$ for some $n \in \N$.
\end{prop}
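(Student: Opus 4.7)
The proof plan is to apply \autoref{fibretofibre} twice and use the base map conjugation $h \circ \phi = \phi_1 \circ h$ (valid as rational maps) to read off both claims.

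First, I would show $h(C) = b \in \check \E(\phi)$. Since a destabilising curve is by definition contracted by $\phi$, we have $C \subseteq \E(\phi)$, and by the first bullet of \autoref{fibretofibre} there exists a single $b \in B$ with $C \subseteq h^{-1}(b)$. Since $C$ is irreducible, $h(C) = \{b\}$, and trivially $b = h(C) \subseteq h(\E(\phi)) = \check \E(\phi)$, giving the first conclusion.

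Second, I would trace the destabilising orbit through $h$. Let $p$ denote the point of $X$ to which $C$ is contracted. By commutativity of the skew product diagram (applied at any point of $C \setminus I(\phi)$, where both sides are morphisms) we get $h(p) = \phi_1(h(C)) = \phi_1(b)$. Following the paper's standing assumption that the destabilising orbit consists of closed points $p, \phi(p), \ldots, \phi^{n-1}(p)$ with $q := \phi^{n-1}(p) \in I(\phi)$, iterating the semi-conjugacy yields
\[ h(q) \;=\; h(\phi^{n-1}(p)) \;=\; \phi_1^{n-1}(h(p)) \;=\; \phi_1^{n}(b). \]
Since $q \in I(\phi)$, we obtain $\phi_1^n(b) = h(q) \in h(I(\phi)) = \check I(\phi)$, which is the second claim.

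The only real subtlety is that the orbit point $\phi^{n-1}(p)$ could in general be a curve rather than a closed point (if some intermediate $\phi^i(p)$ meets $I(\phi)$); however, the paper restricts attention to destabilising orbits of closed points, and in any case the second bullet of \autoref{fibretofibre} would force every such image-curve to sit in a single fibre of $h$, so the identity $h(\phi^i(p)) = \phi_1^i(h(p))$ still makes sense as an equality of points in $B$, and the argument goes through. So there is no serious obstacle; the statement is essentially a direct corollary of \autoref{fibretofibre} together with the semi-conjugacy built into the definition of a skew product.
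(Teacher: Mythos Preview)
Your proof is correct and follows essentially the same approach as the paper's: both use \autoref{fibretofibre} to place $C$ in a single fibre, and then push the destabilising orbit down to $B$ via the semi-conjugacy $h\circ\phi=\phi_1\circ h$ to land $\phi_1^n(b)$ in $\check I(\phi)$. Your write-up is somewhat more explicit (separating out $h(p)=\phi_1(b)$ and discussing the closed-point assumption), whereas the paper compresses the computation into the single line $\phi_1^n(b)=\phi_1^n(h(C))=h(\phi^n(C))=h(p)$ with $p\in I(\phi)$, but there is no substantive difference.
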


\begin{proof}
 By definition, $b \in \check \E(\phi)$ and we have an $n$ such that $\phi^n(C) = p$ and $\phi(p) = D$, a curve in $X$. Therefore \[\phi_1^n(b) = \phi_1^n(h(C)) = h(\phi^n(C)) = h(p).\] 
 This point is in $\check{I}(\phi)$ since $p \in I(\phi)$.
\end{proof}

\begin{rmk}
We may assume that if $B$ is not rational then $\phi_1 : B\to B$ has no ramification points. When the genus of $B$ is at least $1$, Riemann-Hurwitz shows that $\phi_1$ must be unramified, regardless of degree.
 Further, if the genus is $g > 1$ then the Riemann-Hurwitz formula and Hurwitz automorphism theorem show that $\phi_1 : B \to B$ is an automorphism of finite order; i.e., after replacing $\phi$ by an iterate, we may assume $\phi_1$ is the identity.
\end{rmk}

{For a significant part of this article we will use the (dynamical) theory of \emph{non-Archimedean skew products} on the Berkovich projective line, developed in the author's thesis. For basic information on the Berkovich projective line we refer to \cite{Bene}. We primarily refer the reader to \cite[\S3]{thesis} for details, but provide below an extremely brief outline of the most important and relevant features.

 \begin{defn}
  Let $K$ be a non-Archimedean field and $\Psi$ an endomorphism of $K(y)$ extending an automorphism of $K$, i.e. the following diagram commutes:
  \[
\begin{tikzcd}[ampersand replacement = \&]
 K(y) \& K(y) \arrow[swap]{l}{\Psi} \\
K \arrow[hook]{u} \& K \arrow[hook]{u} \arrow{l}{\Psi_1}
\end{tikzcd}
\]
In this case we will call $\Psi : K(y) \to K(y)$ a \emph{skew endomorphism} of $K(y)$. We will typically denote the restriction $\left.\Psi\right|_K$ by $\Psi_1$.
\end{defn}

\begin{defn}[Non-Archimedean Skew Product]
 Suppose that $\Psi : K(y) \to K(y)$ is a skew endomorphism of $K(y)$ and there is a $\q$ such that $\abs{\Psi(a)} = \abs{\Psi_1(a)} = \abs{a}^{\frac 1\q}$ for every $a \in K$. Then we say $\Psi$ is \emph{\dilating} with \emph{scale factor} $\q$. Given such a $\Psi$, we define $\Psi_*$, a \emph{(non-Archimedean) skew product over $K$}, as follows.
\begin{align*}
 \Psi_* : \P^1_\an(K) &\longrightarrow \P^1_\an(K)\\
 \zeta &\longmapsto \Psi_*(\zeta)\\
 \text{where } \norm[\Psi_*(\zeta)]{f} &= \norm[\zeta]{\Psi(f)}^\q
\end{align*}
If $\q=1$ then we call $\Psi_*$ a \emph{simple} skew product. Otherwise, if $\q < 1$ we say it is \emph{superattracting}, and if $\q > 1$ we may say it is \emph{superrepelling}.
\end{defn}

Suppose we have a skew product $\phi : X \dashto X$ over a birationally ruled surface $h : X \to B$ as previously defined, and a fixed fibre of $X$ over $b = \phi_1(b)$. One can complete the local ring at $b \in B$ to $\k[[x]]$ and think of $\phi$ as $\phi(x, y) = (\phi_1(x), \phi_2(x, y))$. This information is equivalent to a $\k$-algebra homomorphism on $\k[[x]](y)$. We can write $\phi_1^*(x) = \phi_1(x) \in \k[[x]]$ where $\phi_1(x) = \lambda x^n + \bO(x^{n+1})$ with some $\lambda \in \k^\times$. This extends to a \emph{\dilating{} skew endomorphism} on its algebraic closure $\K = \K(\k)$, called the \emph{Puiseux series}. It is also helpful extending to its completion, the Levi-Civita field $\hk$, but we will generally forget this formality. We define the \emph{relative degree} as $\rdeg(\phi) = \deg_y(\phi_2)$.
  \[
\begin{tikzcd}[ampersand replacement = \&]
 \k((x))(y) \& \k((x))(y) \arrow[swap]{l}{\phi^*} \\
\k((x)) \arrow[hook]{u}{h^*} \& \k((x)) \arrow[hook, swap]{u}{h^*} \arrow{l}{\phi_1^*}
\end{tikzcd}
\]
This induces a non-Archimedean skew product $\phi_* : \P^1_\an(\K) \to \P^1_\an(\K)$. We may call such a map a \emph{$\k$-rational skew endomorphism}, or say it is \emph{defined over $\k((x))$} to highlight that $\phi_1 \in \k[[x]], \phi_2 \in \k((x))(y)$. 
In this concrete case, with $n$ as above, $\phi_*$ has scale factor $\q = \frac 1n$. So $\phi_*$ is simple iff $n = 1$, and superattracting otherwise, corresponding with the local behaviour at $b \in B$.

Skew products on the Berkovich projective line turn out to be piecewise-linear, proper, open mappings, which preserve the `Types' of points on $\P^1_\an$.

\begin{defn}
Let $\phi_*$ be a skew product. We say an open set $U \subseteq \P^1_\an$ is \emph{dynamically stable} under $\phi_*$ iff $\displaystyle \bigcup_{n \ge 0} \phi_*^n(U)$ omits infinitely many points of $\P^1_\an$.

The \emph{(Berkovich) Fatou set} of $\phi_*$, denoted $\F_{\phi, \an}$, is the subset of $\P^1_\an$ consisting of all points $\zeta \in \P^1_\an$ having a dynamically stable neighbourhood.

The \emph{(Berkovich) Julia set} of $\phi_*$ is the complement $\J_{\phi, \an} = \P^1_\an \sm \F_{\phi, \an}$ of the Berkovich Fatou set.
\end{defn}

A \emph{Fatou component} is a connected component of the Fatou set. The usual properties of Fatou and Julia sets hold, such as $\phi_*(U)$ is a Fatou component whenever $U$ is. For the purposes of this article, we will say an \emph{attracting component} is the immediate basin of attraction for an attracting cycle of classical points, and an \emph{indifferent component} is an affinoid $U \subsetneq \P^1_\an$ such that $\phi_*^n(U) = U$ for some $n$. Following Rivera-Letelier \cite{RL1, RL2, RL4}, in \cite[\S3]{thesis} we proved a classification of Fatou components for skew products. We state it in a simpler form below.

\begin{thm}[Classification of Fatou Components over $\K$, {\cite[Theorem M]{thesis}}] \leavevmode\\
Let $\phi_* : \P^1_\an(\K) \to \P^1_\an(\K)$ be a simple skew product defined over $\k((x))$ of relative degree $d \ge 2$, and let $U \subset \F_{\phi, \an}$ be a periodic Fatou component. Then $U$ is either an indifferent component or an attracting component, but not both.
\end{thm}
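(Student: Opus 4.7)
The plan is to mimic Rivera-Letelier's classification \cite{RL1, RL2} of Fatou components for rational maps on the Berkovich projective line, but replacing their algebraic machinery with arguments exploiting the piecewise-linear, proper, open structure of non-Archimedean skew products established in \cite[\S3]{thesis}. After replacing $\phi_*$ by the iterate $\phi_*^n$ where $n$ is the period of $U$, I may assume $U$ is a fixed Fatou component. The proof then splits according to whether $U$ contains a classical attracting fixed point of $\phi_*$.

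In the first case, suppose $U$ contains a classical fixed point $p$ whose multiplier $\lambda$ satisfies $|\lambda|<1$. Then $\phi_*$ strictly contracts a small Berkovich neighbourhood $V \ni p$, so that $\phi_*(V) \subsetneq V$ and $\bigcap_m \phi_*^m(V) = \{p\}$. Combining dynamical stability of $U$, its connectedness, and the fact that $\phi_*$ is type-preserving and open, every point of $U$ must have forward orbit entering $V$ and hence accumulating at $p$. This exhibits $U$ as the immediate basin of $p$, i.e., an attracting component.

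In the second case, suppose no classical attracting fixed point lies in $U$. The task is to show $U$ is a periodic affinoid. Using dynamical stability together with a non-Archimedean Montel-type argument tailored to skew products, I extract a subsequence $\phi_*^{n_k}|_U$ converging on compact subsets to a limit $h$. Were $h$ constant, its value would be an attracting classical fixed point inside $U$, contradicting the case hypothesis; hence $h$ is non-constant. From this and the properness/type-preservation of $\phi_*$, I conclude $\phi_*^{\ell}(U) = U$ for some $\ell$. To verify $U$ is actually an affinoid, I invoke piecewise-linearity of $\phi_*^\ell$ on the Berkovich tree: the boundary $\partial U$ consists of finitely many type II/III points permuted by $\phi_*^\ell$, placing $U$ in the affinoid class and delivering the indifferent alternative.

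The two alternatives are mutually exclusive because an attracting component requires $\phi_*$ to strictly contract orbits toward a classical attracting cycle, which is incompatible with $U$ being set-wise fixed by an iterate as a nondegenerate affinoid. The main obstacle is the non-Archimedean Montel argument in the second case: the algebraic techniques available for Berkovich rational maps (notably reduction modulo the maximal ideal and the use of multipliers as classical scalars) are unavailable in our setting and must be substituted by combinatorial and geometric arguments on the Berkovich tree using the piecewise-linear structure of $\phi_*$ from \cite[\S3]{thesis}.
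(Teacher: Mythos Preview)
The paper does not actually prove this theorem: it is quoted verbatim from the author's thesis as \cite[Theorem M]{thesis}, with the surrounding text explicitly saying ``in \cite[\S3]{thesis} we proved a classification of Fatou components for skew products. We state it in a simpler form below.'' There is therefore no proof in the present paper to compare your proposal against.

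That said, your sketch has a real gap in the second case. The dichotomy you set up --- either $U$ contains a classical attracting fixed point, or it does not --- is not the operative one in Rivera-Letelier's argument, and your handling of the ``no attracting point'' case is where things go wrong. You invoke a non-Archimedean Montel-type compactness to extract a limit $h$ of subsequences $\phi_*^{n_k}|_U$, then argue that if $h$ were constant its value would be an attracting classical fixed point. But a constant limit of iterates need not be a \emph{classical} (Type~I) point at all; in the Berkovich setting it is typically a Type~II point, and nothing in your argument rules that out. Moreover, even granting a non-constant limit, your inference that $\phi_*^\ell(U)=U$ for some $\ell$ and that $\partial U$ is finite does not follow from piecewise-linearity alone: one needs the injectivity/rigidity results for simple skew products on the indifference locus (e.g.\ the analogues of \cite[Corollary~3.59, Theorem~3.90]{thesis} used elsewhere in the paper) to control how $\phi_*$ acts on $\overline U$. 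As you yourself flag, the substitute for the algebraic Montel machinery is the crux, and your proposal does not supply it.
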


As mentioned in the introduction, this will be fundamental to our method for proving \autoref{thm:intro:stabskew}. Also important is the generalisation of Benedetto's `no wandering domains' results \cite[Theorem 11.2, Theorem 11.23]{Bene} to skew products. A Fatou component $U \subset \F_{\phi, \an}$ of $\phi_*$ is \emph{wandering} iff the iterates $U, \phi_*(U), \phi_*^2(U), \dots$ are all distinct. If not, then $U$ is preperiodic, meaning some $\phi_*^n(U)$ is a periodic attracting or indifferent component.

\begin{defn}
 Let $\phi_*$ be a simple skew product of relative degree $d \ge 2$, let $\F_{\phi, \an}$ be the Berkovich Fatou set of $\phi_*$, and let $\zeta \in \P^1_\an$ be a Type II periodic point of $\phi_*$ of minimal period $p$. We say that a wandering component $U$ of $\F_{\phi, \an}$ is in the \emph{attracting basin} of $\zeta$ if there is some integer $N \ge 0$ such that for all $n \ge 0$,
$\phi_*^{N+np}(U)$ is a residue class at $\zeta$.
\end{defn}

The following is a modification of \cite[Theorem 11.23]{Bene}. It leads to a powerful corollary that every Julia Type II point is preperiodic.

\begin{thm}[No Wandering Domains over $\K$, {\cite[Theorem 3.97, Theorem 3.99]{thesis}}]\label{thm:skewstab:wanderingdombasin}\leavevmode\\
Let $\phi_*$ be a simple skew product defined over $\k((x))$. Let $U \subseteq \F_{\phi, \an}$ be a wandering domain of $\phi_*$. Then $U$ lies in the attracting basin of a Type II Julia periodic point.
\end{thm}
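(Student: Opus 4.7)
The plan is to adapt Benedetto's No Wandering Domains argument (\cite[Theorem 11.23]{Bene}) for rational maps on $\P^1_\an(\hk)$ to our simple skew product $\phi_* : \P^1_\an(\K) \to \P^1_\an(\K)$, relying essentially on the Classification Theorem M above and on the fact that simplicity of $\phi_*$ (scale factor $1$) forces the base map $\Psi_1$ to act by isometries on $\K$. Because $U$ is wandering, the forward iterates $U_n := \phi_*^n(U)$ are pairwise disjoint Fatou components, so by compactness of $\P^1_\an(\K)$ the sequence $(U_n)$ admits at least one $\omega$-limit point $\zeta \in \P^1_\an(\K)$. The heart of the argument is to show that any such $\zeta$ must be a Type II Julia periodic point, and that the iterates $\phi_*^{N+np}(U)$ eventually coincide with a single residue class at $\zeta$.

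First, $\zeta$ must lie in the Julia set. Since Fatou components are open and connected, if $\zeta$ were in some Fatou component $W$, then $U_n \cap W \neq \emptyset$ for all large $n$, forcing $U_n = W$; then by Theorem M, $W$ would be preperiodic, giving $U_{n+m+p} = U_{n+m}$ for some $m, p$, contradicting the wandering assumption. Next I would eliminate Types I, III, IV. For Types III and IV the point $\zeta$ lies along a unique path in the Berkovich tree; the piecewise-linear, proper, type-preserving action of $\phi_*$ forces the iterates $U_n$ to contract along a single branch, which ultimately drives them either to merge or to converge into an indifferent component, contradicting disjointness. For a Type I accumulation, a fibrewise version of Benedetto's hyperbolic-diameter estimate in $\bH$ rules it out, exploiting that $\Psi_1$ is an isometry to transport diameter estimates between fibres. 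Hence $\zeta$ must be Type II.

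With $\zeta$ Type II, consider the reductions of $\phi_*^m$ at $\phi_*^m(\zeta)$ for $m \geq 0$; each such reduction is a rational map on a projective line over the residue field of $\K$, of degree at most the relative degree $d$. Benedetto's pigeonhole/compactness argument, transferred here to the skew product setting, yields a repetition in these reductions that forces $\zeta$ to be preperiodic; combined with the fact that $\zeta$ is an $\omega$-limit point, one sees $\zeta$ is itself periodic with some minimal period $p$. Finally, since the $U_n$ accumulate at $\zeta$ along a subsequence and the residue classes at $\zeta$ are permuted by $\phi_*^p$, grouping iterates modulo $p$ shows that for some $N$ and every $n \geq 0$, the component $\phi_*^{N+np}(U)$ is contained in a single residue class at $\zeta$; since it is itself a Fatou component meeting only that direction, it equals that residue class, establishing the basin structure.

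The main obstacle is ruling out Types I, III, and IV as accumulation points, since this is where Benedetto's use of the hyperbolic metric and of the global degree of a Berkovich rational map must be genuinely transported to the skew product setting. The simplicity hypothesis is essential here: an isometric $\Psi_1$ guarantees that diameter and type are preserved between fibres, so that local fibrewise estimates carry uniformly across the base. Without simplicity, i.e.\ in the superattracting case, the fibrewise scale would collapse under iteration, and wandering components could in principle accumulate at non-Type-II points, consistent with the counterexample developed later for \autoref{thm:intro:skewcounter}.
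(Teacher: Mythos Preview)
The paper does not contain a proof of this theorem. It is stated as background, with the proof deferred to the author's thesis \cite[Theorems 3.97, 3.99]{thesis}, and described only as ``a modification of \cite[Theorem 11.23]{Bene}''. So there is no in-paper proof to compare your proposal against.

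As a standalone sketch, your outline follows the expected Benedetto template, but the parts you flag as ``the main obstacle'' are indeed where the actual work lies, and your proposal does not supply it. Two concrete points. First, your Fatou-case argument is slightly garbled: an $\omega$-limit point $\zeta\in W$ gives $U_{n_k}\cap W\neq\emptyset$ for infinitely many $n$, hence $U_{n_k}=W$ for infinitely many $n$, which already contradicts wandering; invoking Theorem~M here is unnecessary, and the phrase ``for all large $n$'' is not justified. Second, and more seriously, the elimination of Types I, III, IV is essentially asserted rather than argued. For Type~III/IV you say the iterates ``contract along a single branch'' and either merge or fall into an indifferent component; neither alternative is forced by piecewise-linearity and properness alone, and you have not used anything specific to simple skew products. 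For Type~I you appeal to a ``fibrewise version of Benedetto's hyperbolic-diameter estimate'' without stating which estimate or why it transports; Benedetto's argument in \cite[\S11]{Bene} uses the fact that a rational map is a finite morphism with controlled local degrees, and the analogue for skew products (where $\phi_*$ is not given by a single element of $K(y)$ acting diagonally) is exactly what needs to be established. Similarly, the pigeonhole on reductions to get periodicity of $\zeta$ is plausible but not automatic: you need that the reductions live in a set of bounded complexity, which again requires the structure theory for skew products from \cite[\S3]{thesis}. In short, your plan is the right shape, but the substantive steps are all pointers to arguments that would have to be carried out in the thesis rather than here.
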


\begin{cor}[{\cite[Corollary 3.100]{thesis}, \cite[Proposition 3.9]{DeF2}}]\label{cor:skewstab:typeiijuliaperperiodic}
 Let $\phi_*$ be a simple skew product defined over $\k((x))$. Then any Type II Julia point is preperiodic.
\end{cor}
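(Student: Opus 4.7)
The strategy is to argue by contradiction using \autoref{thm:skewstab:wanderingdombasin}. Suppose $\zeta$ is a Type II Julia point that is not preperiodic, and set $\zeta_n := \phi_*^n(\zeta)$, so that the $\zeta_n$ are infinitely many pairwise distinct Type II points. The plan is to manufacture a Fatou component $U$ whose iterates track this orbit, in the sense that $\phi_*^n(U)$ is a residue class at $\zeta_n$ for every $n \ge 0$. Such a $U$ is automatically wandering (the residue classes at distinct Type II points are disjoint), so \autoref{thm:skewstab:wanderingdombasin} furnishes a Type II periodic Julia point $\xi$ of some period $p$ and an integer $N \ge 0$ with $\phi_*^{N+np}(U)$ a residue class at $\xi$ for all $n\ge 0$. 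Each residue class has a unique Type II boundary point, so this forces $\xi = \zeta_{N+np}$ for every $n$, hence $\zeta_N = \zeta_{N+p}$, contradicting non-preperiodicity.

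To build $U$, the plan is to exploit the Type II structure at each $\zeta_n$: the tangent space $T_{\zeta_n}$ is naturally an (infinite) copy of $\P^1$ over the residue field, and $\phi_*$ induces a finite rational \emph{tangent map} $\phi_{*,\zeta_n} : T_{\zeta_n} \to T_{\zeta_{n+1}}$. Call a direction $v$ at any $\zeta_n$ \emph{critical} if the corresponding residue class contains a critical point of $\phi_*$; as $\phi_*$ has only finitely many critical points, each $\zeta_n$ has only finitely many critical directions. Consequently, the set of $v \in T_\zeta$ whose forward tangent orbit $v_n := \phi_{*,\zeta_{n-1}}(v_{n-1})$ ever hits a critical direction is countable. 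For any $v$ outside this countable set, $\phi_*(D_{v_n}) = D_{v_{n+1}}$, and inductively $\phi_*^n(D_v) = D_{v_n}$ is a residue class at $\zeta_n$, as required.

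The main obstacle is to ensure that this $D_v$ actually lies in $\F_{\phi,\an}$, so that $D_v$ is an honest Fatou component rather than a disk harbouring Julia points. The plan is to combine the countable avoidance above with the structural fact, implied by the classification of Fatou components \cite[Theorem M]{thesis}, that the Julia set can approach a Type II point along only a controlled family of tangent directions at that point (the others carrying attracting or indifferent components or their wandering relatives). Choosing $v \in T_\zeta$ outside both the countable critical avoidance set and the Julia-touching directions produces a wandering Fatou component $U = D_v$ with the tracking property, completing the contradiction with \autoref{thm:skewstab:wanderingdombasin}.
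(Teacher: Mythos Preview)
Your overall strategy is the intended deduction from \autoref{thm:skewstab:wanderingdombasin} and is correct: track the wandering orbit $(\zeta_n)$ by a wandering residue class, then invoke no wandering domains to force periodicity of some $\zeta_N$. The countability argument for finding a tangent direction $v$ whose forward orbit avoids all bad directions is also sound. (A minor point: the directions to avoid are the ramification points of the tangent maps $\phi_\#$ at each $\zeta_n$, since these are what guarantee $\phi_*(D_{v_n})=D_{v_{n+1}}$ exactly. For simple skew products these coincide with the directions whose residue classes meet $\Crit(\phi_*)$, so your formulation is fine, but it is worth saying why.)

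Your final paragraph, however, misidentifies the difficulty and offers a justification that does not work. The classification theorem you cite concerns \emph{periodic} Fatou components and says nothing about which tangent directions at a wandering Type~II Julia point meet $\J_{\phi,\an}$; there is no reason a priori that only ``controlled'' directions at such a point are Julia. Fortunately no such structural input is needed. Once you have arranged that $\phi_*^n(D_v)=D_{v_n}$ is a residue class at $\zeta_n$ for every $n\ge 0$, the union $\bigcup_{n\ge 0}\phi_*^n(D_v)$ is a disjoint union of residue classes at the pairwise distinct points $\zeta_n$, and in particular omits each $\zeta_n$ --- infinitely many points of $\P^1_\an$. By the \emph{definition} of dynamical stability this already yields $D_v\subseteq\F_{\phi,\an}$. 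Since $\partial D_v=\{\zeta\}\subset\J_{\phi,\an}$, the disk $D_v$ is itself a full Fatou component, and your concluding contradiction with \autoref{thm:skewstab:wanderingdombasin} goes through unchanged.
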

}

%%%%%
{
\section{General Cases}\label{sec:general}

In this section we commence the proof of \autoref{thm:intro:stabskew}. First we will deal with two general cases, boiling the issue down to periodic fibres.
}

\begin{thm}\label{thm:skewstab:nonperiodic}
 Let $h : X \dashto B$ be a fibration over a curve $B$ and let $\phi : X \dashto X$ be a skew product. 
 Then there is a smooth surface $\tilde X$ and a birational morphism $\pi_0 : (\tilde \phi, \tilde X) \to (\phi, X)$, blowing up $X$ finitely many times, such that all destabilising orbits (points and curves) of the conjugate $\tilde \phi : \tilde X \dashto \tilde X$ are contained in periodic fibres.
\end{thm}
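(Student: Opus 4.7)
The plan is to isolate destabilising activity to periodic fibres by performing a finite sequence of point blowups above a finite set of non-periodic base points in $B$. First, by \autoref{prop:galois:betterskewprod} and the subsequent remark, I may assume $X$ is smooth and $h : X \to B$ is a continuous fibration with connected fibres. The key finiteness input is the unnamed proposition at the end of \autoref{sec:background}: every destabilising curve $C$ satisfies $h(C) \in \check\E(\phi)$ and $\phi_1^n(h(C)) \in \check I(\phi)$ for some $n$, where both base sets are finite. Calling $b \in B$ \emph{periodic} when $\phi_1^k(b) = b$ for some $k \ge 1$, I define
\begin{equation*}
\Sigma := \{ b \in B : b \text{ is non-periodic and } b = \phi_1^i(b_0) \text{ for some destabilising base orbit } b_0, \ldots, b_n,\ 0 \le i \le n \},
\end{equation*}
and claim $\Sigma$ is finite. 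Indeed, for each preperiodic $b_0 \in \check\E(\phi)$ the non-periodic prefix of its forward orbit is bounded by the pre-period; for each aperiodic $b_0$, the forward orbit is injective and therefore meets the finite set $\check I(\phi)$ only finitely often, uniformly bounding $n$.

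Next, I would perform a Diller--Favre-style iteration, blowing up $X$ at each indeterminate point lying in a non-periodic fibre over $\Sigma$---and over its forward $\phi_1$-orbit while still non-periodic. Each such blowup is a point blowup of a smooth surface and hence preserves smoothness; the new exceptional divisor lives inside the same fibre as the blown-up point, so any freshly introduced indeterminacy stays within the finite collection of fibres being treated. Together with the uniform bound on non-periodic destabilising lengths above, this should force the procedure to terminate after finitely many blowups.

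The main obstacle is to confirm that no destabilising orbit of the final map $\tilde\phi$ visits a non-periodic fibre. The danger is that a newly-created exceptional divisor in a non-periodic fibre could itself be contracted by $\tilde\phi$, with its image then having a forward orbit reaching an indeterminate point in some later (possibly $\phi_1$-periodic) fibre---producing a forbidden destabilising orbit originating outside the periodic locus. To rule this out, one must also resolve enough of the indeterminacy along the $\phi_1$-forward iterates of $\Sigma$ so that forward orbits of contracted curves based in $\Sigma$ never meet $I(\tilde\phi)$; this is essentially resolving the graph of $\phi^N$ above $\Sigma$ for $N$ at least the maximum destabilising length originating in $\Sigma$. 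Producing a strictly decreasing numerical invariant---e.g.\ a weighted count of destabilising chains through non-periodic fibres---to drive the Diller--Favre-style termination is where the real technical work lies.
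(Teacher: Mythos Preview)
Your setup is sound: the finite sets $\check \E(\phi)$, $\check I(\phi)$ and the proposition linking destabilising curves to them give the right finiteness framework, and you correctly flag the central difficulty, namely that blowing up indeterminate points can create new exceptional divisors in the very fibres you are trying to clean up. However, your proposed fix---resolving the graph of $\phi^N$ over $\Sigma$ for a uniform $N$---does not close the gap. The problem is that once a non-periodic base point $b$ is \emph{preperiodic} (i.e.\ its $\phi_1$-orbit eventually enters a cycle), an exceptional curve over $b$ may map to a point whose orbit enters the cycle and only then, after arbitrarily many iterations \emph{inside} the cycle, meets $I(\tilde\phi)$. There is no a priori bound on the length of such a destabilising orbit, so no finite $N$ suffices; your ``maximum destabilising length originating in $\Sigma$'' need not exist.

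The paper separates the two behaviours and uses \emph{different} blowups in each. For a wandering $b \in \check I(\phi)$ (infinite forward $\phi_1$-orbit), it blows up indeterminate points over $b$; any new exceptional curve there is harmless because the forward orbit of $b$ never revisits $\check I(\phi)$. For a preperiodic but non-periodic $b \in \check \E(\phi)$, it does the \emph{opposite}: it blows up the image point of each exceptional curve over $b$ until that curve is no longer contracted. This creates new indeterminacy, but only in the fibre over $\phi_1(b)$---one step closer to the cycle---and crucially not behind any remaining exceptional curve in a non-periodic fibre. Processing such $b$ in order (starting from those with no $\check\E$-points in their backward orbit), the decreasing invariant is the number of non-periodic steps in the $\phi_1$-projection of a destabilising orbit before it reaches the cycle. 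This is exactly the ``strictly decreasing numerical invariant'' you were seeking; the missing idea is to switch from resolving indeterminacy to resolving contracted-ness along the preperiodic tail.
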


\begin{proof}
Throughout this proof we will refer to the same map even after modifying the surface, to avoid a landslide of notation. In other words, after several blowups via $\rho$ we would usually have $\rho : (\psi, Y) \to (\phi, X)$, but we will identify $\psi$ with $\phi$.

\noindent\textbf{Wandering destabilising orbits.} First, suppose that $b \in \check I(\phi)$ is a point with infinite forward orbit. Since $\check I(\phi)$ is finite, we can replace $b$ with the last such point in $\Orb^+_{\phi_1}(b)$. Since $\check \E(\phi)$ is finite and $b$ is not periodic, the points in $\check \E(\phi)$ only appear finitely many times in the backward orbit of $b$. Let $b_1, \dots, b_n$ be those points. Now blowup every indeterminate point in $h^{-1}(b)$ until the new version of $\phi$ is continuous on $X_b$. This decreases the maximum length of orbits between the $b_j$ and $\check I$ by one. In doing this we may create a new exceptional curve $C$ in $X_b$, but because $b$ is wandering the forward orbit of $b \in \check \E(\phi)$ is disjoint from $\check I(\phi)$, hence $C$ is not destabilising. Now repeat this process for each $b' \in \phi_1^{-1}(b)$, and so on, each time reducing the maximum length of destabilising orbits over wandering points of $\phi_1$ in $B$.
Therefore, by induction, eventually every destabilising orbit lies within fibres above preperiodic points in $B$.

\noindent\textbf{Preperiodic destabilising orbits.} 
Suppose that $c_1, c_2, \dots, c_N$ is a cycle for $\phi_1$. For each $j$ there may be finitely many $b \in \check \E(\phi) \cap \Orb^-(c_j)\sm \set[c_j]{1 \le j \le N}$. These $b$ are not periodic themselves. Consider what happens if we blowup points in the image of the exceptional curves above such $b$ until they are not exceptional anymore. We begin the process with each $b_0 \in \Orb^-(c_j)$ such that $\Orb^-(b_0) \cap \check \E(\phi) = \emp$. This operation is finite in each fibre; it may create more indeterminacy but not in the forward orbit of exceptional curves, so any newly created destabilising orbit projects down to \[\phi_1(b_0), \phi_1^2(b_0), \dots, \phi_1^m(b_0) = c_j, c_{j+1} \dots.\] Therefore in one step we have reduced the length of any such projection of a destabilising orbit (prior to the $c_j$ cycle) from $m$ to $m-1$. Continuing this process, we can push all destabilising orbits into (not preperiodic but) \emph{periodic fibres}.
\end{proof}

\noindent\textbf{Periodic destabilising orbits.} 
\autoref{thm:skewstab:nonperiodic} has reduced the proof of {\autoref{thm:intro:stabskew}} to the following theorem. We will expand on this in the next section.

\begin{thm}\label{thm:skewstab:periodicgeom}
 Let $h : X \to B$ be a smooth birationally ruled surface and $\phi : X \dashto X$ be a rational skew product over $B$. Suppose that $b_1, b_2, \dots b_N$ is a cycle for $\phi_1$, none of which are critical. Then there is a smooth global model $\hat X$ over $(b_j)_{j=1}^N$ dominating $X$, such that the conjugate $\hat \phi : \hat X \dashto \hat X$ is algebraically stable.
\end{thm}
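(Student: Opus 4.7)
The plan is to replace $\phi$ by the iterate $\phi^N$ so that each $b_j$ becomes a fixed point of $\phi_1$, reducing the task to that of a single fixed fibre $X_{b_0}$. Completing the local ring of $B$ at $b_0$, the behaviour of $\phi$ on and near $X_{b_0}$ is captured by a non-Archimedean skew product $\phi_* : \P^1_\an(\K) \to \P^1_\an(\K)$ defined over $\k((x))$; the hypothesis that $b_0$ is not critical for $\phi_1$ forces $\phi_*$ to be simple. By the correspondence developed in \autoref{sec:vertex}, smooth global models $\hat X \to X$ obtained by blowing up inside $X_{b_0}$ are in bijection with finite sets $\Gamma \subset \P^1_\an$ of Type II points satisfying a smoothness-adjacency condition, and algebraic stability of the conjugate lift translates into a purely combinatorial dynamical condition describing how $\phi_*$ should act on $\Gamma$ together with the residue classes (tangent directions) at its vertices.

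Under this translation, destabilising orbits in $X_{b_0}$ correspond to finite $\phi_*$-trajectories linking a contracted residue class at some $v \in \Gamma$ to an indeterminate residue class at some $w \in \Gamma$. The plan would be to exploit two fundamental dynamical inputs: the Classification of Fatou Components, which ensures that every periodic Fatou component is either attracting or indifferent; and \autoref{cor:skewstab:typeiijuliaperperiodic}, which ensures that every Type II Julia point is preperiodic, so that the periodic Type II Julia points which could obstruct stability form a closed finite set. By \autoref{thm:skewstab:wanderingdombasin}, any residue class at a vertex of $\Gamma$ pointing into a Fatou component is eventually absorbed either into the attracting basin of such a periodic Type II Julia point, or into a periodic attracting/indifferent Fatou component.

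The construction of $\Gamma$ would proceed iteratively, starting from a seed containing all periodic Type II Julia points together with $h^*(b_0)$ and the Type II vertices arising from the indeterminate and exceptional data of $\phi$ along $X_{b_0}$. I would then enlarge the seed by repeatedly adding vertices along the $\phi_*$-orbit of each currently problematic residue class until that orbit either meets an already incorporated periodic cycle, or lies in a Fatou region whose classification forces eventual settling, and by simultaneously adjoining auxiliary Type II points in order to restore the smoothness-adjacency condition after each enlargement. The hard part, and the reason \autoref{sec:thetrickybit} is needed, is reconciling the smoothness requirement with the dynamical one at every step: each vertex inserted purely for smoothness is itself a new Type II point with its own $\phi_*$-orbit, which must in turn be absorbed into $\Gamma$ without triggering further smoothness deficiencies, and the dynamics of $\phi_*$ on Julia points is generally chaotic rather than contracting. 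Termination of this potentially runaway procedure depends crucially on the preperiodicity of Type II Julia points and on the attracting/indifferent dichotomy, which together confine every relevant orbit to a controlled finite skeleton and allow the iterative enlargement to stop after finitely many rounds.
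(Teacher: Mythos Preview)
Your outline correctly identifies the translation to vertex sets and the relevant dynamical inputs, but two substantial pieces are missing.

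First, replacing $\phi$ by $\phi^N$ changes the statement: stabilising $\phi^N$ over a single fixed fibre does not stabilise $\phi$ over the cycle $b_1,\dots,b_N$, since a destabilising orbit of $\phi$ threads through all $N$ fibres. The paper works directly with the periodic chain $\bigl(\phi_*^{(j)}:\P^1_{\an,(j)}\bigr)_{j=1}^N$ and produces compatible vertex sets $\Gamma_{(j)}$ at every fibre simultaneously (\autoref{thm:extendvertexset}); the reduction to $N=1$ in the exposition is purely notational, with a closing paragraph indicating how the argument adapts. The introduction explicitly stresses that the result is for $\phi$ itself, not an iterate.

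Second, your termination sketch does not supply a mechanism. Seeding with ``all periodic Type II Julia points'' is not obviously finite, and in any case the difficulty is not the Julia points themselves (which are preperiodic) but the auxiliary points inserted for smoothness: these are typically Fatou points whose orbits can be arbitrarily long before settling, and each such orbit, once added, spawns further smoothness deficiencies. The paper's actual termination argument rests on two tools you do not mention. One is the No-Folding Lemma (\autoref{lem:nofolding}), which produces a finite subtree $T\subset\Hull(\Ram(\phi))$ outside of which $\phi_*$ cannot fold intervals; the seed is $\Gamma\cup(T\cap\GV_{m_0})$, not the Julia periodic points. The other is the divisibility $\mg(\phi_*(\zeta))\mid\mg(\zeta)$ of \autoref{prop:skewstab:multiplicitydivides}, which forces the maximal multiplicity $m_+$ of newly added points to be eventually constant. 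With these in hand, the paper introduces a growing family of \emph{persistent F-disks} obeying explicit axioms, and finishes by a case analysis (flanked versus unflanked satellite points of multiplicity $m_+$) that uses no-folding to derive a contradiction if the procedure runs forever. Your appeal to a ``controlled finite skeleton'' does not capture this; preperiodicity of Julia Type II points and the Fatou classification are indeed used, but only inside the rules for passing from $\tilde\Gamma_n$ to $\Gamma_{n+1}$, not as the reason the iteration halts.
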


%%%%%

\thesisarticle{\subsection{Reduction to Vertex Sets}}{\section{Reduction}\label{sec:vertex}}

In what follows, we translate and further reduce \autoref{thm:skewstab:periodicgeom} to one about vertex sets for non-Archimedean skew products on the Berkovich Projective Line, namely \autoref{thm:extendvertexset}. We start by summarising some concepts from {\cite[\S 4.11]{thesis}}.

{%
Let $\phi : X \dashto X$ be a skew product on a birationally ruled surface $h : X \to B$ over the base field $\k$ (e.g.\ $\C$). Let $b \in B$ and consider its local ring $\bO_{B, b}$ on $B$, this is a discrete valuation ring, and let $m_b = (x)$ be its maximal ideal. The fraction field is $\Frac(\bO_{B, b}) = \k(B)$ and the residue field is $\k = \bO_{B, b}/m_b = \k(b)$. The associated \emph{order of vanishing norm} $\abs\cdot$ with respect to $b$ and $x$ measures the order of vanishing of functions on $B$ at $b$. This norm makes $(\k(B), \abs\cdot)$ a non-Archimedean field with ring of integers $\bO_{B, b}$. 
Assuming $B$ is smooth at $b$, then by the Cohen structure theorem, the completion of $\bO_{B, b}$ is isomorphic to $\k[[x]]$, where the generator $x$ may be considered the same as above.

\begin{defn}
 Let $h : X \to B$ be a birationally ruled surface, $b \in B$. A \emph{global model of $X$ over $b \in B$} is a birationally ruled normal (but possibly singular) surface $g : Y \to B$ which is isomorphic to $X$ away from $X_b$. Meaning there is a birational map $\iota : Y \dashto X$ such that $\iota : Y \sm Y_b \to X \sm X_b$ is an isomorphism over $B$. A birational map $\rho : Y \dashto Y'$ of models over $b$ is a birational map over $B$ such that $\iota' \circ \rho_\ell \circ \iota^{-1}$ restricts to the identity on $X\sm X_b$.
 
Further, given finitely many closed points $b_1, \dots, b_n \in B$, we make a similar definition for a \emph{global model of $X$ over $(b_j)$} where the map $\iota$ is an isomorphism away from $\bigcup_j X_{b_j}$.
\end{defn}

Given a global model $Y$ of $X$ over $b \in B$, we can define a \emph{reduction map} $\redct_{Y, b} : \P^1_\an(\K) \to Y_b$ which maps each $\zeta \in \P^1_\an$ to the point (possibly a curve!)\ cut out by functions $f$ such that $\norm[\zeta]{f} < 1$, i.e.\ those which vanish according to $\zeta$. Further, this reduction factors as $\redct_{Y, b} = \mathfrak p \circ \redct'_{Y, b}$ through the natural quotient $\mathfrak p : \P^1_\an(\K) \to \V = \P^1_\an(\k((x)))$ by the action of the Galois group $G = \Gal(\K/\k((x)))$, to the valuative tree. Through $\redct'_{Y, b}$, there is a one-to-one correspondence between the finitely many irreducible curves in $Y_b$ (its generic points) and a finite set of `divisorial' or Type II points $\Gamma_G(Y) \subset \V$. Further pulling this back to $\P^1_\an(\K)$ we obtain a finite $G$-invariant subset $\Gamma(Y) \subset \P^1_\an(\K)$. 
See {\cite[4.48]{thesis} or \cite[2.4.4]{Berk}}.

\begin{defn}
 Let $\Gamma \subset \P^1_\an$ be a finite set of Type II points --- which we will call a \emph{vertex set}. Then $\P^1_\an\sm \Gamma$ is the disjoint union of a collection $\mathcal{S}(\Gamma)$ of open connected affinoids, each of which we call a $\Gamma$\emph{-domain}. If a $\Gamma$-domain has one boundary point, we call it a $\Gamma$\emph{-disk}, and if it has two, we call it a $\Gamma$\emph{-annulus}. 
Let $\K = \K(\k)$ be the Puiseux series in $x$ over $\k$, and $G = \Gal(\K/\k((x)))$. If $\Gamma \subset \P^1_\an(\K)$ is $G$-invariant, then projecting to $\P^1_\an(\k((x)))$ we obtain a vertex set, denoted by $\Gamma_G$, and $\Gamma_G$-domains $\mathcal{S}_G(\Gamma)$. 
 We let $\mathcal{S}^+(\Gamma) = \mathcal{S}(\Gamma) \cup \Gamma$ be the set of $\Gamma$-domains and the points in $\Gamma$ itself. 
 Given a global model $Y$ over $b$, we define $\Gamma(Y)$ to be the vertex set $\redct_{Y, b}^{-1}(Y_{\text{gen}})$.
\end{defn}

One can reconstruct the \emph{dual graph} $\Delta(\Gamma)$ whose vertices are $\Gamma$ and taking an edge $\zeta\xi$ whenever $\zeta, \xi \in \partial U$ for some $\Gamma$-domain $U$. When $\Gamma_G = \Gamma_G(Y)$ for a global model $Y$ over $b$, $\Delta(\Gamma_G)$ is precisely the dual graph of divisors in $Y_b$, where edges signify intersections. Clearly if $U$ has more than two boundary points, then $\Delta(\Gamma)$ contains a triangle, so if $\Delta(\Gamma)$ is a tree then every $\Gamma$-domain must be a disk or annulus; in our context of rational fibres, the converse also holds. This corresponds to a model with simple crossings (SC) in the fibre; see \cite[Proposition 4.51]{thesis}.
}

\subsection{Smoothness}

Next we outline an equivalence between smoothness of a global model $Y$, and the geometry of its associated vertex set $\Gamma(Y)$. 
\thesisarticle{}{For the latter we recall the (equivalent) definitions of multiplicities from \cite[\S4]{thesis}.}

\begin{defn}
 Let $\zeta \in \P^1_\an(\K)$. Define $\mm(\zeta)$ to be $|\Orb_G(\zeta)|$. For a subset $U \subset \P^1_\an$ define $\mm(U) = \min_{\zeta \in U} \mm(\zeta)$. Define the \emph{multiplicity $n$ subtree} by \[\T_n = \set[\zeta \in \P^1_\an(\hk)]{\mm(\zeta) \divides n}.\]
\end{defn}

\begin{prop}[{\cite[Propositions 4.3, 4.20]{thesis}}]
Let $\zeta \in \P^1_\an$.
\begin{enumerate}[label = (\roman*)]
\item If $\zeta = \gamma \in \hk$ is Type I, then $\mm(\gamma)$ is the smallest integer $m$ such that $\gamma \in \k((x^{\frac{1}{m}})) < \K$, or $\infty$ otherwise. Equivalently, if $\gamma \in \hk\sm\K$, then $\mm(\gamma) = \infty$, else if $\gamma \in \K$, then it has a degree $\mm(\gamma)$ minimal polynomial over $\k((x))$.
 \item Suppose $\zeta = \zeta(\gamma, r)$ is Type II or III and let $a$ be the Puiseux series obtained by truncating any Puiseux series $b \in \CD(\gamma, r)$ to $\bO(r)$. Then $\mm(a) = \mm(\zeta) = \min_{b \in \CD(a, r)} \mm(b)$.
 \item If $\zeta$ is Type IV, then $\mm(\zeta) = \infty$.
\end{enumerate}
\end{prop}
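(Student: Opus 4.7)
The plan is to treat the three cases separately, exploiting the explicit structure of $G = \Gal(\K/\k((x)))$ as the profinite limit of the finite cyclic groups $\Gal(\k((x^{1/m}))/\k((x))) \cong \mathbb{Z}/m\mathbb{Z}$, each generated by $x^{1/m} \mapsto \omega_m x^{1/m}$ for a compatible system of primitive $m$-th roots of unity $\omega_m \in \k^{\times}$.

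For \textbf{(i)}, the Puiseux field decomposes as $\K = \bigcup_m \k((x^{1/m}))$, so any $\gamma \in \K$ lies in a smallest such subfield $\k((x^{1/m}))$. Minimality of $m$ forces $\gamma$ to be fixed by no nontrivial subgroup of the cyclic group $\Gal(\k((x^{1/m}))/\k((x)))$: if a nontrivial element stabilised $\gamma$, then $\gamma$ would lie in a proper fixed subfield $\k((x^{1/m'}))$ with $m' \mid m$ strictly less. Every element of $G$ acts on $\gamma$ through the quotient $G \twoheadrightarrow \mathbb{Z}/m\mathbb{Z}$, so $|\Orb_G(\gamma)| = m$. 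If instead $\gamma \in \hk \sm \K$, then $\gamma$ fails to be algebraic over $\k((x))$ (since $\K$ is already algebraically closed), so no finite extension of $\k((x))$ contains $\gamma$ and its Galois orbit must be infinite.

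For \textbf{(ii)}, a Type II or III point $\zeta = \zeta(\gamma, r)$ depends only on the closed disk $\CD(\gamma, r)$, and $G$ acts by $\sigma \cdot \zeta(\gamma, r) = \zeta(\sigma \gamma, r)$. Writing $\gamma = a + \epsilon$ with $a$ the truncation of $\gamma$ to terms of valuation strictly less than $r$ and $|\epsilon| \le r$, one has $\CD(\sigma \gamma, r) = \CD(\gamma, r)$ precisely when $\sigma a = a$: applying $\sigma$ preserves the truncation form, so $\sigma a - a$ is either zero or has norm exceeding $r$. Consequently the stabiliser of $\zeta$ in $G$ coincides with that of $a$, giving $\mm(\zeta) = \mm(a)$. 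For the minimum claim, if $b \in \CD(a, r)$ then $b = a + \delta$ with $|\delta| \le r$, and the same truncation argument shows that any $\sigma$ fixing $b$ must also fix $a$; thus $|\Orb_G(b)| \ge |\Orb_G(a)|$, with equality achieved at $b = a$.

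For \textbf{(iii)}, represent $\zeta$ by a nested sequence of closed disks $\CD(\gamma_n, r_n)$ with $r_n$ decreasing and having no common point in $\K$. Their approximating Type II/III points $\zeta_n = \zeta(\gamma_n, r_n)$ have multiplicities $\mm(\zeta_n) = \mm(a_n)$ by Part (ii), and these are nondecreasing in $n$ since each finer truncation has stabiliser contained in that of the coarser one. The multiplicity of $\zeta$ is the supremum of these, so it suffices to show they cannot be bounded. If some $N$ bounded the sequence, then all $a_n$ would lie in the fixed finite extension $\k((x^{1/N!}))$, which is complete as a finite extension of the complete field $\k((x))$; the nested sequence of closed disks in a complete ultrametric field would then have nonempty intersection inside $\K$, contradicting Type IV. Hence $\mm(\zeta) = \infty$. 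The main technical subtlety is Part (iii): one must carefully justify that the multiplicity of a Type IV point is indeed the supremum of those of the approximating disks, and apply completeness correctly to the finite extensions $\k((x^{1/m})) / \k((x))$.
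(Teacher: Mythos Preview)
The paper does not prove this proposition; it is imported from the author's thesis (cited as \cite[Propositions 4.3, 4.20]{thesis}) and stated without argument. There is therefore no proof in the paper to compare yours against.

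Your argument is essentially correct, with two points worth tightening. In part~(i), for $\gamma \in \hk \setminus \K$ the implication ``not algebraic over $\k((x))$, hence infinite $G$-orbit'' is not automatic, because $G$ acts on $\hk$ only via continuous extension from $\K$, so ordinary Galois theory does not directly apply. The clean fix is the one you already use implicitly in~(ii): truncation commutes with the $G$-action, so if $\gamma$ had stabiliser of index $m$ then every truncation of $\gamma$ would lie in $\k((x^{1/m}))$; this subfield is already complete, so $\gamma \in \k((x^{1/m})) \subset \K$, a contradiction. In part~(iii), invoking mere ``completeness'' of $\k((x^{1/N!}))$ to force nonempty intersection of nested closed disks is slightly loose, since completeness alone does not guarantee this (witness $\mathbb{C}_p$). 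What you need is spherical completeness, which does hold here because $\k((x^{1/N!}))$ is \emph{discretely} valued and complete: if the radii are bounded away from zero the disks are eventually constant, and if the radii tend to zero the centres form a Cauchy sequence. Your implicit identification $\mathrm{Stab}(\zeta) = \bigcap_n \mathrm{Stab}(\zeta_n)$ for a Type~IV $\zeta$ is correct and follows from the ultrametric fact that two intersecting closed disks of equal radius coincide.
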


It follows that $\T_m \subseteq \T_n \iff m \divides n$, and $\T_n$ is indeed a closed connected set. Further, $\mm{} : \P^1_\an(\hk) \longrightarrow \N_+\cup\{\infty\}$ is lower semicontinuous, both in the usual order on $\N$, and with respect to the multiplicative order of natural numbers $(\N_+\cup\{\infty\}, <_m)$. In {\cite[Proposition 4.25]{thesis}} we outline the structure of the multiplicity $n$ subtree. 

\begin{prop}[{\cite[Proposition 4.25]{thesis}}]\label{prop:skewstab:multsubtreevalency}
 The subtree $\T_n$ is an infinite tree with discrete branching in the following sense: every (non-endpoint) vertex $\zeta \in \T_n$ of valency at least $3$ is of Type II and in every direction at $\zeta$ there is an edge of length $1/n$ which has no further branching.
\begin{itemize}
 \item The set of non-endpoint vertices is of the form 
 \[\set[\zeta \in \bH]{\zeta = \zeta(a, |x|^\frac pq),\ \mm(a), q \divides n},\]
 hence $d_\bH(\zeta_1, \zeta_2) \in \tfrac 1n \N$ for any two $\zeta_1, \zeta_2$ in the set.
 \item  Let $\zeta = \zeta(a, |x|^\frac pq)$ with $\mm(a) = \mm(\zeta) = m$, $\GCD(p, q) = 1$, and set $\mg = \LCM(m, q)$. 
\begin{enumerate}[label = (\roman*)]
 \item There is a $\zeta' \in (\zeta, \infty]$ such that $\mm(\xi) = m$ for every $\xi \in [a, \zeta')$.
 \item Let $c \in \C^*$. Then for every $\xi \in \vec v(a + cx^\frac pq)$, we have $\mm(\xi) \ge \mg$, and $\mm(\xi) = \mg$ for every $\xi \in [a + cx^\frac pq, \zeta)$.
 \item In particular, $\zeta$ has two directions with possibly lower multiplicities, $\mm(\vec v(\infty)) = 1$, $\mm(\vec v(a)) = m$, and for every other direction $\mm(\bvec v) = \mg$.
\end{enumerate}
\end{itemize}
\end{prop}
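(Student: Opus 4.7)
My plan is to deduce everything from the preceding proposition, which computes $\mm$ at each point type, together with a single Puiseux-series calculation. By definition $\mm(a) = m$ is the smallest integer so that $a \in \k((x^{1/m}))$. For $c \in \k^{\times}$ and $p/q$ in lowest terms, the series $a + cx^{p/q}$ adjoins a term whose exponent has denominator exactly $q$, so $a + cx^{p/q} \in \k((x^{1/n}))$ if and only if both $m \divides n$ and $q \divides n$; hence $\mm(a + cx^{p/q}) = \LCM(m,q) = \mg$. This is the only nontrivial computation; the rest is bookkeeping on the Berkovich tree.

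For the local statements at a Type II point $\zeta = \zeta(a, |x|^{p/q})$, recall tangent directions biject with $\P^1(\k)$: $\vec v(\infty)$ points toward smaller radius, $\vec v(a)$ contains $a$, and for each $c \in \k^{\times}$ a direction $\vec v(a + cx^{p/q})$ contains the Type I point $a + cx^{p/q}$. Then $\vec v(\infty)$ contains the constant series $0$ (multiplicity $1$), $\vec v(a)$ contains $a$ (multiplicity $m$), and by the key computation the remaining directions contain a Puiseux series of multiplicity $\mg$, giving (iii). For (ii), a point $\xi \in [a + cx^{p/q}, \zeta)$ has the form $\zeta(a + cx^{p/q}, |x|^s)$ with $s > p/q$, and the truncation of $a + cx^{p/q}$ to $\bO(x^s)$ is $a + cx^{p/q}$ itself, so $\mm(\xi) = \mg$; for arbitrary $\xi$ in the full component, any associated truncation still contains both $a$ and $cx^{p/q}$, hence extends $a + cx^{p/q}$ by higher-order terms, yielding $\mm(\xi) \ge \mg$ since adjoining such terms cannot remove denominator requirements. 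For (i), normalising $a$ to coincide with its own truncation to $\bO(x^{p/q})$ (so that $a$ has no terms of order $\ge p/q$), one sees that for any $s' \in (s_{\max}, p/q)$ (with $s_{\max}$ the largest order in the support of $a$) the truncation to $\bO(x^{s'})$ still equals $a$, so $\mm \equiv m$ on $[a, \zeta(a, |x|^{s'})]$.

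Assembling the global structure: Type III points have valency $2$ in the Berkovich tree and Type I/IV points are endpoints, so any non-endpoint vertex of $\T_n$ of valency $\ge 3$ is a Type II point $\zeta = \zeta(a, |x|^{p/q})$ with $\gcd(p,q)=1$. By (iii), $\zeta$ has three or more directions in $\T_n$ iff the generic direction multiplicity $\mg$ divides $n$; combined with $\mm(\zeta) = m \divides n$, this gives the characterisation $m, q \divides n$. Since $q \divides n$ forces $p/q \in \tfrac{1}{n}\Z$, and no rational strictly between $p/q - 1/n$ and $p/q$ or between $p/q$ and $p/q + 1/n$ has denominator dividing $n$, the next $\T_n$-branch vertex in each of the directions $\vec v(\infty)$, $\vec v(a)$, $\vec v(a+cx^{p/q})$ lies exactly at hyperbolic distance $1/n$ with no intermediate branching, and iterating forces $d_{\bH}(\zeta_1, \zeta_2) \in \tfrac{1}{n}\N$ for every pair of branch vertices. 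The main obstacle is the lower bound in (ii) --- ruling out that $\mm$ drops somewhere inside the full component rather than only along the direct segment to the Type I endpoint --- which is handled by the monotonicity argument above together with the identity $\mm(\zeta) = \min_b \mm(b)$ over the closed disk from the preceding proposition.
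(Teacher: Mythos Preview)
The paper does not include a proof of this proposition: it is quoted verbatim from the author's thesis with the citation \cite[Proposition 4.25]{thesis} and no argument is supplied here, so there is nothing in the present paper to compare against.

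That said, your argument is essentially correct and is the natural one: everything reduces to the Puiseux computation $\mm(a+cx^{p/q})=\LCM(m,q)$ together with the truncation description of $\mm$ from the preceding proposition. One slip: you write that $\vec v(\infty)$ ``points toward smaller radius'', but in fact it is the direction of \emph{larger} disks (toward the Gauss point and on to $\infty$); your subsequent reasoning (``contains the constant series $0$'') is consistent with the correct orientation, so this is only a wording error. Your handling of the lower bound in (ii) is fine once you observe, via the identity $\mm(\zeta)=\min_{b\in\CD}\mm(b)$, that every Type I point in the component $\vec v(a+cx^{p/q})$ has the form $a+cx^{p/q}+(\text{higher order})$ and hence multiplicity $\ge\mg$; the same then transfers to Type II/III points by truncation. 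For (i), note that the normalisation ``$a$ equals its own truncation to $\bO(x^{p/q})$'' is already guaranteed by the preceding proposition, so you need not impose it separately.
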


\begin{defn}
 Let $\zeta \in \P^1_\an$. Define \emph{generic multiplicity}, $\mg(\zeta)$ to be
\begin{enumerate}
 \item $\mm(\zeta)$ if $\zeta$ is Type I,
 \item $\mg$ as in \autoref{prop:skewstab:multsubtreevalency} if $\zeta$ is Type II,
 \item $\infty$ if $\zeta$ is Type III or IV.
\end{enumerate}
\end{defn}

We find that $\mg(\zeta)$ is the smallest $n$ such that $\zeta$ is a vertex in $\T_n$ if at all, or $\infty$ otherwise. The set of vertices of $\T_n$ is given by $\set[\zeta \in \P^1_\an]{\mg(\zeta) \mid n}$. For the purpose of stabilisation and producing smooth models, it will be useful to instead make a similar definition $\GV_n = \set[\zeta \in \P^1_\an]{\mg(\zeta) \le n}$. This latter set gives the vertices of $\bigcup_{m \le n}\T_m$. 

\begin{defn}
Let $\zeta$ be a Type II point and $\bvec v \in \Dir \zeta$. We will say $\bvec v$ is a \emph{generic direction} iff $\mm(\bvec v) = \mg(\zeta)$, and say it is \emph{special} otherwise.
\end{defn}

\begin{defn}
We say a vertex set $\Gamma \subset \P^1_\an$ is \emph{geometric} iff it is (Galois) $G$-invariant, i.e. it lifts to a $\Gamma_G \subset \V$.
Further, we say that $\Gamma$ is \emph{smooth} if and only if for every $\Gamma$-domain $U \in \mathcal{S}(\Gamma)$, either
\begin{enumerate}
 \item $U$ is a disk with boundary point $\zeta$ and $\mm(U) = \mg(\zeta)$; or
 \item $U$ is an annulus with boundary points $\zeta_1, \zeta_2$ and $\mg(\xi) > \max(\mg(\zeta_1), \mg(\zeta_2))$ for all $\xi \in U$. 
\end{enumerate}
 \end{defn}

In essence, the data of a vertex set associated to a smooth global model $Y_b$ is exactly what one should expect from repeated blowing up of a minimal smooth model. For this reason it is important to differentiate between `free' and `satellite' exceptional curves. See {\cite[Theorem 4.68]{thesis}}. 
 
 \def\unflanked{unflanked}
 \def\flanked{flanked}
 
\begin{defn}
 Let $\zeta \in \P^1_\an$ be Type II.
\begin{itemize}
 \item When $\mg(\zeta) = 1$ we say $\zeta$ is \emph{integral}.
 \item We say $\zeta$ is \emph{free} iff $\mg(\zeta) = \mm(\zeta)$, and \emph{satellite} otherwise.
\item Suppose $\Gamma \ni \zeta$ is a vertex set. We will say $\zeta$ is \emph{\flanked{}} (by $\Gamma$) iff $\Gamma$ intersects each of its special directions, and \emph{\unflanked{}} otherwise.
 \end{itemize}
 \end{defn}
 
 \begin{rmk}\label{rmk:skewstab:specialgeneric}
 \autoref{prop:skewstab:multsubtreevalency} says that a Type II point $\zeta$ has at most two special directions, namely $\vec v(a), \vec v(\infty)$ where $a \in \K$ is described in the proposition. Further, there are exactly two special directions if and only if $\zeta$ is satellite. Otherwise, $\zeta$ is free and $\mm(\zeta) = \mg(\zeta) = \mm(a) = \mm(\vec v(a))$. The direction $\vec v(\infty)$ is always special unless $\zeta$ is integral, in which case every direction is generic. 
 If $\Gamma$ is geometric, then $\Hull(\Gamma)$ contains a point of multiplicity $1$. 
  If $\Gamma$ contains a point of multiplicity $1$, then any free point is \flanked{}. 
  For a satellite point $\zeta$ to be \flanked{} the second (finite) special direction must intersect $\Gamma$. 
  It turns out that every point of a smooth vertex set is necessarily \flanked{}. Although this condition is not sufficient, \unflanked{} points will be the main obstruction to smoothness. The first condition of smoothness says precisely that any $\Gamma$-disk is a generic direction.
\end{rmk}
 
 \begin{defn}
 We define $\GV_n$ to be the set of Type II points $\xi$ of generic multiplicity $\mg(\xi) \le n$. Let $\Gamma \subset \GV_n$ be a vertex set and $n \in \N$ below.
 \begin{itemize}
 \item Define the \emph{$n$-convex hull} of $\Gamma$ to be $\Hull(\Gamma) \cap \GV_n$. We will say $\Gamma$ is \emph{$n$-convex} if it equals its $n$-convex hull.
 \item We say $\Gamma$ is \emph{smoothly $n$-convex} iff it is geometric, $n$-convex, and each of its points is \flanked{}.
 \item Define the \emph{smooth $n$-convex hull} of $\Gamma$ to be the smallest smoothly $n$-convex set containing $\Gamma$.
\end{itemize}
\end{defn}

\begin{rmk}[Warning]
The smooth $n$-convex hull of $\Gamma$ may not be contained in the convex hull, $\Hull(\Gamma)$, of $\Gamma$.
\end{rmk}

\begin{prop}\label{prop:skewstab:smoothhull}
 Let $\Gamma \subset \GV_n$ be a geometric vertex set. %containing a point of generic multiplicity $1$.
 Then for each \unflanked{} point $\zeta \in \Hull(\Gamma) \cap \GV_n$ there exists a $\xi_1 \in \Hull(\Gamma)$ and a free $\xi_2 \nin \Hull(\Gamma)$ (both lying in special directions) such that $\zeta \in (\xi_1, \xi_2)$, $\mg(\xi_1) \mid \mm(\zeta) = m = \mg(\xi_2) \le n$, and $d_\bH(\xi_1, \xi_2) = 1/m$.
 In particular if $\Lambda \supseteq \Gamma$ is the collection of all $\zeta\in\Gamma$ and associated such $\xi_2$ as above, then the smooth $n$-convex hull of $\Gamma$ is $\Hull(\Lambda) \cap \GV_n$.
\end{prop}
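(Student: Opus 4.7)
The plan has two parts: for each unflanked $\zeta$ I will construct the pair $(\xi_1, \xi_2)$ explicitly as endpoints of the edge of $\T_m$ containing $\zeta$, and then deduce the characterization of the smooth $n$-convex hull.

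The first key observation is that $\zeta$ must be satellite. Integral points are vacuously flanked since every direction there is generic. If $\zeta$ were free but non-integral, its only special direction is $\vec v(\infty)$, which is the unique direction at $\zeta$ admitting multiplicity dropping toward $1$; hence the integral point guaranteed to lie in $\Hull(\Gamma)$ by \autoref{rmk:skewstab:specialgeneric} would flank $\vec v(\infty)$, contradicting $\zeta$ unflanked. So $\zeta$ is satellite with two special directions $\vec v(a), \vec v(\infty)$. The same tree argument (the unique path from $\zeta$ to $\infty \in \T_1$ lies entirely in the special directions, and a path to any other integral point from $\zeta$ must also use a special direction by the tree structure of $\T_1$) shows at least one special direction is flanked. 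Call the flanked one $\vec v_f$ and the missed one $\vec v_m$. Set $m = \mm(\zeta) \le \mg(\zeta) \le n$; by \autoref{prop:skewstab:multsubtreevalency}, $\zeta$ lies strictly inside a unique edge of $\T_m$ of length $1/m$, and I define $\xi_1, \xi_2$ as the endpoints with $\xi_1 \in \vec v_f$ and $\xi_2 \in \vec v_m$. Then $d_\bH(\xi_1, \xi_2) = 1/m$, $\zeta \in (\xi_1, \xi_2)$, $\mg(\xi_i) \mid m$ (in fact equal, via $\LCM(\mm(a), q_i)$ with $q_i \mid m$), and for $m > 1$, where the integral point argument forces $\vec v_f = \vec v(\infty)$, the point $\xi_2 = \zeta(a, r)$ has disk $\CD(\xi_2) \subseteq \CD(\zeta)$, so every series in $\CD(\xi_2)$ has $\mm \ge m$ with equality at $a$, giving $\xi_2$ free with $\mg = m$; for $m = 1$, $\xi_2$ is an integral vertex of $\T_1$, trivially free. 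That $\xi_1 \in \Hull(\Gamma)$ follows because the geodesic in $\Hull(\Gamma)$ from $\zeta$ to any integral point in $\vec v_f$ must traverse the entire edge (no intervening $\T_m$-vertex) and hence passes through $\xi_1$. Finally $\xi_2 \notin \Hull(\Gamma)$ because $\vec v_m$ contains no $\Gamma$-point.

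For the second assertion, set $\Lambda := (\Hull(\Gamma) \cap \GV_n) \cup \{\xi_2(\zeta) : \zeta \text{ unflanked in } \Hull(\Gamma) \cap \GV_n\}$, with the $\xi_2$'s chosen by the canonical, hence $G$-equivariant, prescription above. To show $\Hull(\Lambda) \cap \GV_n$ is smoothly $n$-convex: $G$-invariance and $n$-convexity are automatic; for flanking, each originally unflanked $\zeta$ now has its $\vec v_m$ filled by $\xi_2 \in \Lambda$, and any new point $p \in (\zeta, \xi_2]$ is a Type II satellite whose two special directions point to $\xi_2$ and back through $\zeta$, both containing $\Lambda$-points. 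For minimality, let $\Gamma' \supseteq \Gamma$ be smoothly $n$-convex; by $n$-convexity $\Gamma' \supseteq \Hull(\Gamma) \cap \GV_n$, so each unflanked $\zeta$ lies in $\Gamma'$ and its $\vec v_m$ must be flanked by some $\gamma \in \Gamma'$. If $\gamma$ lay strictly between $\zeta$ and $\xi_2$, it would be a satellite in $\Gamma' \cap \GV_n$ requiring itself to be flanked toward $\xi_2$; iterating would produce an infinite chain of distinct satellite points inside the single length-$1/m$ edge, contradicting finiteness of $\Gamma'$. Hence $\xi_2 \in \Hull(\Gamma') \cap \GV_n = \Gamma'$, so $\Lambda \subseteq \Gamma'$ and $\Hull(\Lambda) \cap \GV_n \subseteq \Gamma'$.

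The main obstacle I anticipate is the delicate asymmetry between the two special directions: only the endpoint of the edge of $\T_m$ on the $a$-side is automatically free when $m > 1$, so identifying which direction is missed (via the integral-point argument) is essential before selecting $\xi_2$. Secondary care is required to run the minimality descent within the single edge of length $1/m$ and to track the interplay between $\mm$ and $\mg$ along geodesics, though these are computational rather than conceptual difficulties.
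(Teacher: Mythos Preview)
Your argument is essentially correct and follows the paper's skeleton: $\zeta$ is satellite, lies in the interior of an edge of $\T_m$, and $\xi_1, \xi_2$ are that edge's endpoints. Two points of contrast are worth noting. For $\mg(\xi_2) = m$, you pin down the unflanked direction as $\vec v(a)$ (via the integral point of $\Hull(\Gamma)$ necessarily lying in $\vec v(\infty)$ when $m>1$) and then read off freeness directly from $\mm(\vec v(a)) = m$ together with $\mg(\xi_2)\mid m$; the paper instead argues by contradiction using convexity of $\T_{m'}$: if $\mg(\xi_2) = m' < m$, then $\zeta$ lies between an integral point of $\Gamma$ and $\xi_2 \in \T_{m'}$, forcing $\mm(\zeta) \mid m'$. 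Your route is more concrete; the paper's avoids committing to which special direction is missed. For the second assertion, your explicit descent inside the edge (using finiteness of $\Gamma'$) unpacks what the paper compresses into a single sentence.

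One small slip: the parenthetical ``in fact equal'' for $\mg(\xi_i)\mid m$ is wrong for $\xi_1$ in general (e.g.\ $\xi_1$ may be integral with $\mg(\xi_1)=1$ while $m>1$); the proposition only claims $\mg(\xi_1)\mid m$, and that is all the edge-endpoint description of vertices of $\T_m$ yields. Also, in your minimality descent, the flanking witness $\gamma\in\Gamma'$ need not lie on the segment $(\zeta,\xi_2)$; you should pass to the branch point $\gamma' = [\zeta,\gamma]\cap[\zeta,\xi_2]$, check $\gamma'\in\GV_n$ (since $\gamma$ sits in a generic direction at $\gamma'$), and run the descent on $\gamma'$. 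This is a routine fix.
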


\begin{proof}
Suppose $\zeta \in \Gamma$ is a vertex of $\Hull(\Gamma)$ which is not \flanked{}. By \autoref{prop:skewstab:multsubtreevalency} and \autoref{rmk:skewstab:specialgeneric} we may assume $\zeta$ is satellite with special directions $\vec v(a), \vec v(\infty)$ such that $\mg(\zeta) > \mm(\zeta) = \mm(\vec v(a)) = m$ for some Type I point $a$. 
It must be that $\Gamma$ is disjoint from (exactly) one of these two directions. The non-endpoint vertices of $\T_m$ (which contains $[a, \infty] \ni \zeta$) are Type II points with generic multiplicity $m' \mid m$, and separated by edges of hyperbolic length $1/m$. Given that $\mg(\zeta) > m$, $\zeta$ lies on such an edge $(\xi_1, \xi_2)$ where $\xi_1 \in \Hull(\Gamma)$, $\xi_2 \nin \Hull(\Gamma)$ lie in the special directions, and $\mg(\xi_1), \mg(\xi_2) \mid m$. We claim that $\mg(\xi_2) = m$. If $\mg(\xi_2) = m' < m$ then $\zeta \in (\zeta_1, \xi_2)$ where $\zeta_1$ is some point in $\Gamma$ of generic multiplicity $1$. Then $\zeta \in \T_{m'}$ by convexity of the multiplicity $m'$ subtree, so $\mm(\zeta) < m\ \contra$. 
Consider a Galois conjugate $g_*(\zeta)$ of $\zeta$. Then $g_*(\xi_1), g_*(\xi_2)$ satisfy the conclusion of the proposition for $g_*(\zeta)$; thus the set $\Lambda$ is geometric. The last part follows because every point in $\Lambda$ is \flanked{} and any smooth $n$-convex hull is in the convex hull of its \flanked{} points.
\end{proof}

The purpose of the smooth convex hull, via the following proposition, is to give us a simply and uniformly defined (but rather overkill) target when we attempt to expand $\Gamma$ to a smooth (analytically stable) vertex set.
 
\begin{prop}\thesisarticle{[\autoref{prop:galois:freesat}]}{}\label{prop:skewstab:smoothconvex}
 Every smoothly convex vertex set is smooth.
\end{prop}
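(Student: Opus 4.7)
The plan is to directly verify the two defining conditions of smoothness for every $\Gamma$-domain, using that $\Gamma$ is smoothly $n$-convex for some $n$. The whole argument will rest on the structural facts from \autoref{prop:skewstab:multsubtreevalency}: a Type II point $\zeta$ has at most two special directions, and in any generic direction $\vec w$ at $\zeta$ every point $\xi$ satisfies $\mm(\xi) \ge \mg(\zeta)$.

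The first and crucial step I would carry out is to show that \emph{every branch point of $\Hull(\Gamma)$ lies in $\Gamma$}. Let $\zeta_0 \in \Hull(\Gamma)$ be a Type II point with at least three distinct directions leading into $\Gamma$. If $\mg(\zeta_0) \le n$, then $n$-convexity already places $\zeta_0 \in \Gamma$. Otherwise $\mg(\zeta_0) > n$, and then in any generic direction at $\zeta_0$ every point has $\mm \ge \mg(\zeta_0) > n$, so no point of $\GV_n$, hence no point of $\Gamma$, can lie there. This would force all three directions to $\Gamma$ to be special, contradicting the two-special-direction cap. As an immediate consequence, any $\Gamma$-domain with $\ge 3$ boundary points would contain in its interior the tree-median of three of them, a Type II branch point of $\Hull(\Gamma)$ outside $\Gamma$, which is impossible. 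Hence every $\Gamma$-domain is a disk or an annulus.

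Next I would handle the multiplicity conditions. For a $\Gamma$-disk $U$ with boundary $\zeta$, $U$ is a single tangent direction $\vec v$ at $\zeta$; flanked-ness of $\zeta$ forbids $\vec v$ from being special (otherwise $\Gamma \cap U \ne \emptyset$), so $\vec v$ is generic and \autoref{prop:skewstab:multsubtreevalency} yields $\mm(U) = \mg(\zeta)$. For a $\Gamma$-annulus $U$ with boundary $\zeta_1, \zeta_2$ and $\xi \in U$, if $\xi$ lies on the open segment $(\zeta_1, \zeta_2) \subset \Hull(\Gamma) \sm \Gamma$ then $n$-convexity forces $\mg(\xi) > n$ immediately. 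If instead $\xi$ lies in a branch off the segment, I would identify its unique Type II projection $\zeta_a \in (\zeta_1, \zeta_2)$, which sits in $\Hull(\Gamma) \sm \Gamma$ and so has $\mg(\zeta_a) > n$. Rerunning the first-step argument at $\zeta_a$, the two directions toward $\zeta_1$ and $\zeta_2$ must be its special directions (else a $\GV_n$-point would lie in a generic direction), exhausting them; so the branch direction $\vec w$ containing $\xi$ is generic, and \autoref{prop:skewstab:multsubtreevalency} delivers $\mg(\xi) \ge \mm(\xi) \ge \mg(\zeta_a) > n \ge \max(\mg(\zeta_1), \mg(\zeta_2))$.

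The main obstacle will be the branch-point step, since that is where the $n$-convexity hypothesis and the sharp two-special-direction bound from \autoref{prop:skewstab:multsubtreevalency} have to interact to rule out stray branch points of $\Hull(\Gamma)$. Everything else either rides on this observation (the disk-or-annulus dichotomy), invokes flanked-ness at a boundary point (the disk case), or reuses the same branch-point argument verbatim at an interior projection (the annulus multiplicity inequality).
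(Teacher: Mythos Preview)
Your proposal is correct and follows essentially the same line as the paper's own proof: both use \autoref{prop:skewstab:multsubtreevalency} to argue that a putative branch point inside a $\Gamma$-domain would have $\mg \le n$ and hence lie in $\Gamma$ by $n$-convexity, then treat the disk case via flanked-ness and the annulus case by projecting onto the boundary segment and identifying the two special directions there. The only cosmetic difference is that the paper handles the annulus case in one stroke by taking the median $\xi$ of $\zeta, \xi_1, \xi_2$ (which covers both your on-segment and off-segment subcases simultaneously), whereas you split them explicitly.
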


\begin{proof}
 Let $\Gamma = \Hull(\Gamma) \cap \GV_n$ be a smoothly convex vertex set. By definition, this is geometric. Let $U \in \mathcal{S}(\Gamma)$ be a $\Gamma$-domain. Suppose $U$ had three (or more) boundary points $\xi_1, \xi_2, \xi_3$; one can see that there is a Type II point $\zeta \in U$ where these appear in three distinct directions. 
 Using \autoref{prop:skewstab:multsubtreevalency} we deduce that $\mg(\zeta) \le \max_i \mm(\xi_i) \le n$; hence $\zeta \in \Hull(\Gamma) \cap \GV_n\ \contra$. 
 Suppose $U$ is an annulus bounded by $\xi_1, \xi_2 \in \Gamma$ and let $\zeta \in U$ be arbitrary. Let $\xi$ be the Type II point with $\zeta, \xi_1, \xi_2$ in distinct directions. Then $\xi \in (\xi_1, \xi_2)$ has generic multiplicity greater than $n$ since $\xi \nin \Gamma = \Hull(\Gamma) \cap \GV_n$. By \autoref{prop:skewstab:multsubtreevalency}, $\vec v(\xi_1), \vec v(\xi_2)$ must be the two special directions with points of multiplicity at most $\mg(\xi)$, and any other direction $\bvec v$, such as $\vec v(\zeta)$ has multiplicity $\mm(\bvec v) = \mg(\xi)$. Therefore $\mm(\zeta) \ge \mg(\xi) > n \ge \max\set{\mg(\xi_1), \mg(\xi_2)}$, as required for smoothness. 
 Finally, suppose that $U$ is a disk with $\partial U = \zeta \in \Gamma$, so consider $U$ as a direction $\bvec v$ at $\zeta$. If $\bvec v$ is special then $\zeta$ is not \flanked{} in $\Gamma$ by definition. Otherwise, it is generic and $\mm(\bvec v) = \mg(\zeta)$, as required for smoothness.
\end{proof}

\begin{thm}[{\cite[Theorem 4.69]{thesis}}]\label{thm:skewstab:smoothness}
Let $h : X \to B$ be a birationally ruled surface, $b \in B$, and let $\Gamma \subset \P^1_\an$ be a vertex set, i.e.\ a finite set of Type II points. Then $\Gamma$ is smooth if and only if there is a smooth global model $Y$ of $X$ over $b$ such that $\Gamma(Y) = \Gamma$.
\end{thm}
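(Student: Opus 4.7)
The plan is to leverage the already-established correspondence (from earlier in the thesis) between geometric vertex sets $\Gamma \subset \P^1_\an(\K)$ and normal global models $Y$ over $b$, under which irreducible components of $Y_b$ correspond to Galois orbits in $\Gamma$, intersection points between such components correspond to $\Gamma$-annuli, and the complementary ``free'' parts of components correspond to $\Gamma$-disks. Given this dictionary, the theorem reduces to checking that the local smoothness of $Y$ at each closed point of $Y_b$ translates precisely into the two numerical conditions defining smoothness of $\Gamma$.

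For the forward direction, assume $\Gamma$ is smooth and let $Y$ be the (a priori possibly singular) normal model with $\Gamma(Y) = \Gamma$. I would verify smoothness of $Y$ pointwise on $Y_b$. At a point lying in the interior of a single component $E_\zeta$ (a $\Gamma$-disk $U$ at $\zeta$), the equality $\mm(U) = \mg(\zeta)$ forces the direction $U$ to be generic in the sense of \autoref{prop:skewstab:multsubtreevalency}, so the corresponding residue class is unramified over $\zeta$ and $Y$ is smooth there. At an intersection point corresponding to a $\Gamma$-annulus between $\zeta_1,\zeta_2$, the hypothesis $\mg(\xi) > \max(\mg(\zeta_1), \mg(\zeta_2))$ for all interior $\xi$ prevents any intermediate divisorial valuation of smaller multiplicity from being omitted, which by a Hirzebruch--Jung / continued-fraction computation is exactly what is needed for the crossing to be transverse at a smooth ambient point.

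For the converse, $\Gamma(Y)$ is automatically geometric, so I need only check the domain-wise multiplicity conditions. Given a smooth $Y$, each component of $Y_b$ is smooth and intersections at closed points involve at most two components meeting transversely at a smooth point of $Y$. A $\Gamma$-disk at $\zeta$ corresponds to a smooth free direction on $E_\zeta$, and a local coordinate computation shows this forces $\mm(U) = \mg(\zeta)$ (any discrepancy would create a non-reduced or ramified direction, contradicting smoothness). Similarly, for an annulus between $\zeta_1,\zeta_2$, any interior $\xi$ with $\mg(\xi) \le \max(\mg(\zeta_1),\mg(\zeta_2))$ would represent a divisor that would have to appear in $Y_b$ for the model to be smooth, contradicting $\Gamma(Y) = \Gamma$.

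The main obstacle is the local analysis at an annulus. One fixes local coordinates at an intersection point of two fibre components, writes out the equations for the divisorial valuations of intermediate vertices $\xi$ in terms of their generic multiplicities, and checks that the smooth-model hypothesis on the chain of blowups realising $\zeta_1, \zeta_2$ is equivalent to the continued-fraction / toric condition that no intermediate vertex of small multiplicity is skipped. This is essentially the content of the classical Hirzebruch--Jung resolution, translated into the language of the valuative tree; the disk case reduces to the one-dimensional analogue and is substantially easier.
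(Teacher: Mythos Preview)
The paper does not actually prove this theorem: it is stated with the citation \cite[Theorem 4.69]{thesis} and used as a black box, with the proof deferred entirely to the thesis. There is therefore no proof in the paper to compare your proposal against.

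That said, your outline is the standard and correct strategy for results of this type, and it is consistent with the surrounding material the paper does develop. The dictionary you invoke---components of $Y_b$ correspond to Galois orbits in $\Gamma$, nodes correspond to $\Gamma$-annuli, free points of components correspond to $\Gamma$-disks---is exactly what the paper summarises from \cite[\S4]{thesis} (see the discussion of $\redct_{Y,b}$ and \cite[Proposition 4.51]{thesis} on SC models). Your disk case matches \autoref{rmk:skewstab:specialgeneric}: a $\Gamma$-disk being a generic direction is precisely the first smoothness condition. For the annulus case, the Hirzebruch--Jung continued-fraction analysis you describe is indeed the expected mechanism: the condition that no interior $\xi$ has $\mg(\xi) \le \max(\mg(\zeta_1),\mg(\zeta_2))$ is the combinatorial shadow of the fact that a smooth model dominating both $E_{\zeta_1}$ and $E_{\zeta_2}$ must already contain every intermediate divisor of small generic multiplicity. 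The paper hints at this by noting that smooth vertex sets are ``exactly what one should expect from repeated blowing up of a minimal smooth model'' and by referencing \cite[Theorem 4.68]{thesis} for the free/satellite distinction.

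One point to be careful about in carrying out your plan: in the annulus case you should make precise that the relevant local ring at the node is a two-dimensional cyclic quotient singularity whose resolution graph is the chain of intermediate vertices of $\GV_{m_0}$ in $(\zeta_1,\zeta_2)$, and that smoothness of the node is equivalent to this chain being empty. Your phrasing ``would have to appear in $Y_b$ for the model to be smooth'' is the right intuition but needs the explicit toric computation to be airtight.
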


By \thesisarticle{\autoref{thm:galois:geometric} and \autoref{prop:galois:vertexextension}}{\autoref{thm:skewstab:smoothness} and \cite[4.50]{thesis}} (compare \cite[Theorem 4.11]{BPR}, \cite[Proposition 3.6]{BFJ}), choosing a sequence of blowups $\rho : Y \to X$ centred in fibres over $b_1, \dots, b_N$ corresponds exactly to finding supersets $\Gamma'_{(j)} \supset \Gamma_{(j)} = \Gamma(X_{b_j})$ for each $1 \le j \le N$ which are \emph{smooth}.

%%%%%

\subsection{Skew Product Correspondence}

Suppose that $\phi : X \dashto X$ is a skew product on the birationally ruled surface $h : X \to B$ and $\phi_1(b) = c$, meaning $\phi$ maps $X_b$ to $X_c$. Then the reduction map induces a mapping $\phi_*$ between analytifications of the two fibres, i.e.\ between Berkovich projective lines.

 \[
\begin{tikzcd}
\P^1_\an \arrow[two heads, swap]{d}{\redct_{X, b}} \arrow{r}{\phi_*} & \P^1_\an \arrow[two heads]{d}{\redct_{X, c}} \\
 X_b \arrow[swap]{r}{\phi}& X_c
\end{tikzcd}
\]

More precisely, we can take the completion of $B$ around $b$ and $c$ such that both points are represented by $(x)$ in $\k[[x]]$, which is isomorphic to each completed local ring. Now $\phi_1$ is locally given by \[\phi_1(x) = \lambda_1x + \lambda_2x^2 + \lambda_3x^3 + \cdots \] where $\lambda_n \in \k$. In \cite[\S 4.3, \S4.4]{thesis} we describe how the algebra map extends via completion (over $b_j$) to a \emph{dilating skew-endomorphism} $\phi^* : \k((x))(y) \to  \k((x))(y)$, meaning that $\abs{\phi_1(a)} = \abs a^n\ \forall a \in \k((x))$. Here, $n$ is the first integer with $\lambda_n \ne 0$ and we call $\q = \frac 1n$ the \emph{scale factor} of $\phi^*$. The induced non-Archimedean skew product $\phi_* : \P^1_\an \to \P^1_\an$ can be defined on a seminorm $\zeta \in \A^1_\an$ by $\norm[\phi_*(\zeta)]{f} = \norm[\zeta]{\phi^*(f)}^\q$. We say $\phi_*$ is \emph{simple} iff $n = 1$, equivalently $\q = 1$ or $\lambda_1 \ne 0$.

If $b=c$ we obtain a dynamical system representing the dynamics on a fibre fixed by $\phi$. In general, we may chain these semi-conjugacies together, considering an orbit $b_1 \mapsto b_2 \mapsto \cdots \mapsto b_N$ and a global model $Y$ of $X$ over $(b_j)_{j=1}^N$. It is most interesting to consider this situation where this orbit is a periodic cycle.

 \[
\begin{tikzcd}
\cdots \arrow{r} & \P^1_{\an, (j-1)} \arrow[two heads]{d}{\redct_{j-1}} \arrow{r}{\phi_*^{(j-1)}} & \P^1_{\an, (j)} \arrow[two heads]{d}{\redct_j} \arrow{r}{\phi_*^{(j)}} & \P^1_{\an, (j+1)} \arrow[two heads]{d}{\redct_{j+1}} \arrow{r}{\phi_*^{(j+1)}}& \P^1_{\an, (j+2)} \arrow[two heads]{d}{\redct_{j+2}} \arrow{r} & \cdots \\
 \cdots \arrow[swap]{r} & X_{b_{j-1}} \arrow[swap]{r}{\phi} & X_{b_j} \arrow[swap]{r}{\phi} & X_{b_{j+1}} \arrow[swap]{r}{\phi} & X_{b_{j+2}} \arrow{r} & \cdots
\end{tikzcd}
\]

\begin{defn}
 A \emph{chain of skew products} \[\left(\phi_*^{(j)} : \P^1_{\an, (j)}\right)_{j=1}^N,\] is a sequence of $1 \le N \le \infty$ copies of the Berkovich projective line $\P^1_{\an, (j)} = \P^1_\an(\hk)$ for $1 \le j \le N$ and skew products $\phi_*^{(j)} : \P^1_{\an, (j)} \to \P^1_{\an, (j+1)}$ for $1 \le j < N$. We say it is \emph{preperiodic} iff additionally for some $p \ge 1, n_0 \ge 0$ we have $N = p + n_0$ and $\phi_*^{(N)} : \P^1_{\an, (N)} \to \P^1_{\an, (n_0 + 1)}$; we call it \emph{periodic} when $n_0 = 0$. In this case, we extend the chain taking all indices eventually modulo $p$, e.g.\ $\P^1_{\an, (j)} = \P^1_{\an, (j+p)}$ for $j > n_0$. To minimise notation, we will write $\phi_*^n : \P^1_{\an, (j)} \to \P^1_{\an, (j+n)}$ for the composition \[\phi_*^{(j+n-1)} \circ \cdots \circ \phi_*^{(j+1)} \circ \phi_*^{(j)} : \P^1_{\an, (j)} \to \P^1_{\an, (j+n)}.\] 
\end{defn}

In this case, $\phi_1^{(j)}$ is given by \[\phi_1^{(j)}(x) = \lambda_{j, 1}x + \lambda_{j, 2}x^2 + \lambda_{j, 3}x^3 + \cdots \] where $\lambda_{j, n} \in \k$. If the $b_j$ are not critical, the induced (Berkovich) $\k$-rational skew products over these fibres are \emph{simple}; see \cite[\S 3.4, Proposition 4.9]{thesis}. Furthermore, the scale factor for $\phi_*^p$ in a $p$-periodic chain of skew products would be the product of individual scale factors.
 
 \begin{rmk}\label{rmk:skewstab:extendedfatoujulia}
Given a preperiodic chain as described above, for every $j > n_0$, $\phi_*^p : \P^1_{\an, (j)} \to \P^1_{\an, (j)}$ is the same kind of (autonomous) skew product studied in {\cite[\S3]{thesis}}. Therefore the Fatou and Julia sets are defined, and we may write them as $\F_{\an, (j)}$ and $\J_{\an, (j)}$ respectively. By a proof similar to the usual one for invariance of Fatou and Julia sets, one can show that \[\phi_*^{-1}(\F_{\an, (j)}) = \F_{\an, (j-1)},\ \phi_*(\F_{\an, (j)}) = \F_{\an, (j+1)},\ \phi_*^{-1}(\J_{\an, (j)}) = \J_{\an, (j-1)},\ \phi_*(\J_{\an, (j)}) = \J_{\an, (j+1)}.\] Naturally, we also find that Fatou components map to Fatou components. By defining attracting, indifferent and wandering components by their behaviour for $\phi_*^p$, we find that a Fatou component $U \subset \P^1_{\an, (j)}$ of one class maps to the component $\phi_*^{(j)}(U)  \subset \P^1_{\an, (j+1)}$ of the same class. The concepts of Julia point and Fatou component will be used later in the proof of \autoref{thm:extendvertexset}.
\end{rmk}

One feature that makes \autoref{thm:intro:stabskew} possible is the fact that a simple skew product $\phi_*$ does not increase the multiplicity of points. Failure of this property is responsible for the explosion of heights in parameters seen in \autoref{thm:intro:skewcounter}.

\begin{prop}[{\cite[Proposition 4.35, Corollary 4.41]{thesis}}]\label{prop:skewstab:multiplicitydivides}
 Let $\phi_*$ be a simple skew product over $\k((x))$, and $\zeta \in \P^1_\an$. Then $\mm(\phi_*(\zeta)) \mid \mm(\zeta)$, $\mg(\phi_*(\zeta)) \divides \mg(\zeta)$ and $\phi_*(\T_n) \subseteq \T_n$.
\end{prop}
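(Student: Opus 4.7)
The central idea is that a simple skew product defined over $\k((x))$ commutes with the Galois action of $G = \Gal(\K/\k((x)))$ on $\P^1_\an(\K)$: because $\phi^*$ is a $\k$-algebra map whose restriction $\phi_1^*$ to $\k((x))$ is an automorphism, the chosen extension $\phi_1^* \colon \K \to \K$ satisfies $g \circ \phi_1^* = \phi_1^* \circ g$ for every $g \in G$. This can be checked concretely on a Puiseux root $x^{1/n}$: the element $g$ multiplies $x^{1/n}$ by a root of unity $\zeta_n^{g_n} \in \k$ (and fixes $\k[[x]]$), while $\phi_1^*$ sends $x^{1/n}$ to $\lambda^{1/n} x^{1/n} u^{1/n}$ with $\lambda \in \k^\times$ and $u \in 1 + x \k[[x]]$ (the two operations do not interfere). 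Combined with $G$-invariance of $y$, this lifts to $G$-equivariance of $\phi_*$ on Berkovich space.

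Given Galois equivariance, the first claim is formal: $\phi_*$ carries the $G$-orbit of $\zeta$ onto the $G$-orbit of $\phi_*(\zeta)$ via a $G$-equivariant surjection, whose fibres therefore have equal size, so $\mm(\phi_*(\zeta)) = |\Orb_G(\phi_*(\zeta))|$ divides $|\Orb_G(\zeta)| = \mm(\zeta)$. The third claim, $\phi_*(\T_n) \subseteq \T_n$, is then immediate from the definition $\T_n = \{\zeta : \mm(\zeta) \mid n\}$.

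For the middle claim I would argue by type. Types III and IV give $\mg = \infty$, making the statement vacuous, and Type I gives $\mg = \mm$, handled by the first claim. The substantive case is Type II. Write $\zeta = \zeta(a, |x|^{p/q})$ in standard form with $\mm(a) = \mm(\zeta) = m$ minimal in the disk and $\gcd(p,q)=1$, so $\mg(\zeta) = \LCM(m,q)$. Expand $\phi_2(x,y) = \phi_2(x,a) + \sum_{k \ge 1} c_k (y-a)^k$; because $\phi_2 \in \k((x))(y)$, each $c_k$ lies in $\k((x))(a) = \k((x^{1/m}))$. Using the definition $\|f\|_{\phi_*(\zeta)} = \|\phi^*(f)\|_\zeta$ together with the fact that $\phi_1^*$ is an $x$-adic isometry (as $\phi_*$ is simple), a direct Taylor calculation identifies $\phi_*(\zeta) = \zeta(a', r')$ with $a' = (\phi_1^*)^{-1}(\phi_2(x,a))$ and $r' = \max_{k \ge 1} |c_k|\cdot|x|^{kp/q}$. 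Since $c_k \in \k((x^{1/m}))$, its exponent $s_k/t_k$ has $t_k \mid m$, so the combined exponent $s_k/t_k + kp/q$ lies in $\tfrac{1}{\LCM(m,q)}\Z = \tfrac{1}{\mg(\zeta)}\Z$, a property preserved under taking the minimum over $k$; hence $r' = |x|^{p'/q'}$ with $q' \mid \mg(\zeta)$. Since $(\phi_1^*)^{-1}$ also preserves $\k((x^{1/m}))$, one has $\mm(a') \mid m$, and therefore $\mg(\phi_*(\zeta)) = \LCM(\mm(a'), q') \mid \mg(\zeta)$.

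The main obstacle I anticipate is the bookkeeping of the Type II case: one must separately handle $a$ being a pole of $\phi_2(x,\cdot)$ (reducing via the coordinate change $y \mapsto 1/y$), confirm that $\phi_*(\zeta)$ is genuinely Type II and not a Type III point in the interior of an edge (so that the identification $\mg = \LCM(\mm(a'),q')$ applies), and track carefully that the maximum of exponents $s_k/t_k + kp/q$ with $t_k \mid m$ lands in the correct denominator. Aside from this book-keeping, everything follows from Galois-equivariance and the explicit local form of $\phi_*$.
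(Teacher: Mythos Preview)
Your argument is sound. The Galois-equivariance step is the right idea and your verification on $x^{1/n}$ is correct once you fix a compatible system of roots $\lambda^{1/n}$ (this is where simplicity, i.e.\ $\phi_1(x)=\lambda x+\cdots$ with $\lambda\ne 0$, is used so that $u^{1/n}\in 1+x\k[[x]]\subset\k((x))$ is $G$-fixed). The orbit-counting deduction of $\mm(\phi_*(\zeta))\mid\mm(\zeta)$ and hence $\phi_*(\T_n)\subseteq\T_n$ is then immediate.

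For the Type~II case your Taylor computation is also correct, with two clarifications. First, your centre $a'=(\phi_1^*)^{-1}(\phi_2(x,a))$ need not have minimal multiplicity in its disk; but this is harmless, since $\mm(\phi_*(\zeta))\mid m$ is already known from the first part, and your denominator bound $q'\mid\mg(\zeta)$ then gives $\mg(\phi_*(\zeta))=\LCM(\mm(\phi_*(\zeta)),q'')\mid\LCM(m,\mg(\zeta))=\mg(\zeta)$, where $q''\mid q'$ is the reduced denominator. Second, the worry about $\phi_*(\zeta)$ being Type~III is unnecessary: the paper records that skew products preserve types, so a Type~II point maps to a Type~II point and the $\LCM$ formula applies.

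As for comparison: the paper does not prove this proposition at all; it is quoted from the thesis \cite[Proposition~4.35, Corollary~4.41]{thesis}. So there is no in-paper argument to compare against. Your Galois-equivariance approach is the natural one and presumably close to what the thesis does for $\mm$; the explicit local computation for $\mg$ is a reasonable self-contained substitute for whatever structural argument the thesis uses.
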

 
 \subsection{Analytic Stability}

The following definitions are due to DeMarco and Faber \cite{DeF2}; they will translate algebraic stability from rational skew products over to skew products on the Berkovich projective line. 
  Recall that a vertex set $\Gamma \subset \P^1_\an(\hk)$ separates $\P^1_\an\sm \Gamma$ into the disjoint union of a collection $\mathcal{S}(\Gamma)$ of connected open affinoids called $\Gamma$-domains. When a $\Gamma$-domain has one boundary point we call it a $\Gamma$-disk, and if it has two, we call it a $\Gamma$-annulus.
  
\begin{defn}
Let $\left(\phi_*^{(j)},  \P^1_{\an, (j)}\right)_{j=1}^N$ be a (possibly periodic) chain of $N$ skew products, and let $\Gamma_{(j)} \subset \P^1_{\an, (j)}$ be vertex sets. 
\begin{itemize}
\item A $\Gamma_{(j)}$-domain $U$ will be called an \emph{F-domain} if $\phi_*^n(U) \cap \Gamma_{(j+n)} = \emp$ for all $n \ge 1$, and otherwise $U$ will be called a \emph{J-domain}. If $U$ is a $\Gamma_{(j)}$-disk, then it will be called an \emph{F-disk} or a \emph{J-disk}, respectively.
\item Write $\mathcal{J}(\Gamma_{(j)}) \subset \mathcal{S}(\Gamma_{(j)})$ for the subset consisting of all J-domains.
 \item Write $\mathcal{F}(\Gamma_{(j)}) \subset \mathcal{S}(\Gamma_{(j)})$ for the subset consisting of all F-domains.
 \item We say $\zeta \in \Gamma_{(j)}$ is \emph{destabilising} iff $\exists n \in \N$ such that $\phi_*^n(\zeta) \in U \in \mathcal{J}(\Gamma_{(j+n)})$.
 \item We say that $\left(\phi_*^{(j)}, \Gamma_{(j)}\right)_{j=1}^N$ is \emph{analytically stable} iff each of the $\Gamma_{(j)}$ have no destabilising points.
 \end{itemize}
\end{defn}

Caution that in \cite{DeF2}, the set $\mathcal{J}(\Gamma)$ is defined to also include the elements of $\Gamma$.
 We prefer the partition $\mathcal{S}(\Gamma) = \mathcal{J}(\Gamma) \cupdot \mathcal{F}(\Gamma)$ of $\Gamma$-domains. The next proposition provides an equivalent but apparently easier condition to satisfy. 
 
\begin{prop}\label{prop:stabskew:analyticstability}
Let $\left(\phi_*^{(j)},  \P^1_{\an, (j)}\right)_{j=1}^N$ be a periodic chain of $N$ skew products, and let $\Gamma_{(j)} \subset \P^1_{\an, (j)}$ be vertex sets. Then $\left(\phi_*^{(j)}, \Gamma_{(j)}\right)_{j=1}^N$ is analytically stable if and only if for every $\zeta \in \Gamma_{(j)}$, either $\phi_*(\zeta) \in \Gamma_{(j+1)}$ or $\phi_*(\zeta) \in U$, where $U \in \mathcal{F}(\Gamma_{(j+1)})$ is an F-domain.
\end{prop}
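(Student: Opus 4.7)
The plan is to treat the two directions separately and concentrate the substance into a one-step propagation lemma for F-domains. The forward direction is immediate: if the chain is analytically stable then no $\zeta \in \Gamma_{(j)}$ is destabilising, so in particular the $n=1$ case of the definition forces $\phi_*(\zeta) \notin \bigcup \mathcal J(\Gamma_{(j+1)})$, i.e.\ $\phi_*(\zeta) \in \Gamma_{(j+1)} \cup \bigcup \mathcal F(\Gamma_{(j+1)})$.

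For the converse I assume the one-step condition holds at every stage $j$, and induct on $n \ge 1$ to show that every $\zeta \in \Gamma_{(j)}$ satisfies
\[\phi_*^n(\zeta) \in \Gamma_{(j+n)} \cup \bigcup \mathcal F(\Gamma_{(j+n)}),\]
which is precisely the negation of destabilisation. The base case $n=1$ is the hypothesis. In the inductive step, the trivial sub-case is $\phi_*^n(\zeta) \in \Gamma_{(j+n)}$, where the hypothesis applied directly to $\phi_*^n(\zeta)$ closes the step. The substantive sub-case is when $\phi_*^n(\zeta) \in U$ for some $U \in \mathcal F(\Gamma_{(j+n)})$, and here I would invoke the following key lemma: \emph{if $U \in \mathcal F(\Gamma_{(j)})$, then $\phi_*(U)$ is contained in a single $\Gamma_{(j+1)}$-domain $V$, and moreover $V \in \mathcal F(\Gamma_{(j+1)})$}.

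To prove this lemma I would use that skew products on $\P^1_\an$ are continuous, open, and proper. Connectedness of $U$ together with $\phi_*(U) \cap \Gamma_{(j+1)} = \emp$ (the $n=1$ instance of the F-condition for $U$) places $\phi_*(U)$ inside a unique $\Gamma_{(j+1)}$-domain $V$. Properness gives $\phi_*(\overline U) = \overline{\phi_*(U)}$, so $\partial \phi_*(U) \subseteq \phi_*(\partial U)$. The main hypothesis, applied to each $\zeta \in \partial U \subseteq \Gamma_{(j)}$, yields a dichotomy. Either some boundary image $\phi_*(\zeta)$ already sits in an F-domain $W \in \mathcal F(\Gamma_{(j+1)})$, in which case $\phi_*(\zeta) \in W \cap \overline V$; since distinct $\Gamma_{(j+1)}$-domains are disjoint and $W \cap \Gamma_{(j+1)} = \emp$ keeps $\phi_*(\zeta)$ out of $\partial V$, this forces $V = W \in \mathcal F(\Gamma_{(j+1)})$. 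Otherwise every $\phi_*(\zeta) \in \Gamma_{(j+1)}$, so $\partial \phi_*(U) \subseteq \Gamma_{(j+1)}$ and $\phi_*(U)$ is both open and closed in the connected set $V$; by connectedness $\phi_*(U) = V$, and then $\phi_*^m(V) = \phi_*^{m+1}(U)$ misses $\Gamma$ for every $m \ge 1$, again giving $V \in \mathcal F(\Gamma_{(j+1)})$.

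The hard part will be this lemma, particularly the scenario $\phi_*(U) \subsetneq V$, where a priori the iterates of $V$ could stray beyond those of $\phi_*(U)$ and hit $\Gamma$. The dichotomy above resolves this by showing that strict containment can only occur in the first branch, where $V$ is already an F-domain by hypothesis and the potentially bad iterates are therefore benign. With the lemma in hand the induction step closes via $\phi_*^{n+1}(\zeta) \in \phi_*(U) \subseteq V \in \mathcal F(\Gamma_{(j+n+1)})$, so no point of any $\Gamma_{(j)}$ is destabilising and the chain is analytically stable.
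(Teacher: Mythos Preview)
Your proof is correct and the forward direction matches the paper's. For the converse, however, you take a genuinely different route. The paper argues by contrapositive and \emph{descent}: given a destabilising $\zeta$ with $\phi_*(\zeta)$ in an F-domain $U$ but $\phi_*^n(\zeta)$ in a J-domain $V$, it observes that $\phi_*^{n-1}(U)$ cannot contain $V$, locates a boundary point of $\phi_*^{n-1}(U)$ inside $V$, and pulls it back to a point of $\partial U \subseteq \Gamma_{(j+1)}$ that is destabilising with strictly smaller $n$. Your argument is the dual one: a direct \emph{ascent} via the propagation lemma ``under the one-step hypothesis, every F-domain maps into an F-domain''. Both use the same structural facts (openness, properness, $\partial \phi_*(U) \subseteq \phi_*(\partial U)$), but yours isolates a reusable statement, whereas the paper's descent more directly exhibits the vertex at which the one-step condition fails. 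Your dichotomy cleanly handles the delicate case $\phi_*(U) \subsetneq V$: strict containment forces some boundary image to land outside $\Gamma_{(j+1)}$, hence by hypothesis in an F-domain, which must then coincide with $V$.
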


\begin{proof}
 If the latter condition fails for $\zeta \in \Gamma_{(j)}$ then $\phi_*(\zeta) \in U$ lies in a J-domain $U \in \mathcal{J}(\Gamma_{(j+1)})$; clearly this is destabilising. 
Conversely, suppose that $\zeta \in \Gamma_{(j)}$ is destabilising. Replace $\zeta$ with the last iterate $\phi_*^{n_0}(\zeta)$ contained in $\Gamma_{(j+n_0)}$; this way we may also assume that $\zeta \in \Gamma_{(j)}$ but $\phi_*(\zeta) \nin \Gamma_{(j+1)}$. Now, for some $n \ge 1$, we have $\phi_*^n(\zeta) \in V$ where $V \in \mathcal{J}(\Gamma_{(j+n)})$ is a J-domain. If this is true for $n=1$, then we are done. Otherwise $\phi_*(\zeta) \in U$, where $U \in \mathcal{F}(\Gamma_{(j+1)})$ is an F-domain. We know that $\phi_*^m(V) \cap \Gamma_{(j+n+m)} \ne \emp$ for some $m \in \N$. Hence $\phi_*^{n-1}(U)$ cannot contain $V$, else $U$ would also be a J-domain. Along a path from $\phi_*^n(\zeta) \in \phi_*^{n-1}(U) \cap V$ to some point in $V \sm \phi_*^{n-1}(U)$ we can find $\zeta_n \in \partial(\phi_*^{n-1}(U)) \cap V$. Because $\partial(\phi_*^{n-1}(U)) \subset \phi_*^{n-1}(\partial U)$ there also exists $\zeta_1 \in \partial U$ with $\phi_*^{n-1}(\zeta_1) = \zeta_n$. Now, since $U$ is a $\Gamma_{(j+1)}$-domain, $\zeta_1 \in \Gamma_{(j+1)}$, and this is destabilising because $\zeta_n \in V$. Continuing this way, the proof concludes by induction on $n$.
\end{proof}

Now suppose $b \in B$ is fixed by $\phi_1$ and consider the induced non-Archimedean skew product $\phi_* : \P^1_\an \to \P^1_\an$ over $b$. Let $\Gamma = \Gamma(X_b)$ be the vertex set corresponding to this fixed fibre. Recall that a \emph{destabilising orbit} is an orbit $p, \phi(p), \dots, \phi^{n-1}(p) = p'$ such that $\phi(p')$ is an \emph{inverse destabilising curve} $D$, and $\phi^{-1}(p)$ is a \emph{destabilising curve} $C$. By \autoref{thm:intro:equivalence}, $\phi$ is algebraically stable if and only if $\phi$ has no destabilising orbits. Through the reduction map $\redct_b : \P^1_\an \to X_b$, we see that $C = \redct_b(\set{\zeta_1, \dots, \zeta_s})$ and $D = \redct_b(\set{\xi_1, \dots, \xi_t})$ correspond to finite subsets of Type II points (depending on reducibility). Further, $p = \redct_b(U)$ and $p' = \redct_b(V)$ for two $\Gamma$-domains $U, V$ (one should also consider their Galois conjugates, or more simply consider the unique $\Gamma_G$-domains $U_G, V_G$ in the valuative tree $\V$). The proper transform of $C$ is $p$, meaning $\phi_*(\zeta_j) \in U$ for every $j$; similarly, $\phi(p') = D$ implies $\phi_*(V)$ contains $\set{\xi_1, \dots, \xi_t}$. Also, $\phi_*^{n-1}(U) = V$ because $\phi^{n-1}(p) = p'$. Clearly $V$ is a J-domain, therefore $\zeta_1, \dots, \zeta_s$ are destabilising points. One can check that in this way destabilising Type II points always give rise to destabilising orbits on $X_b$. After generalising this to periodic fibres, we have proven the following.

\begin{prop}\label{prop:stabskew:analyticallystable}
 Let $h : X \to B$ be a birationally ruled surface and $\phi : X \dashto X$ be a rational skew product over $B$. Suppose that $b_1, b_2, \dots b_N$ form a cycle for $\phi_1$ and let $\left(\phi_*^{(j)},  \P^1_{\an, (j)}\right)_{j=1}^N$ be the induced periodic chain of $N$ non-Archimedean skew products. Then $\phi$ has no destabilising orbit contained in the fibres $X_{b_1}, X_{b_2}, \dots X_{b_N}$ if and only if $\left(\phi_*^{(j)}, \Gamma_{(j)}\right)_{j=1}^N$ is analytically stable. In particular, a Type II destabilising point $\zeta \in \Gamma_{(j)}$ corresponds to an irreducible destabilising curve $E \subset X_{b_j}$ for $\phi$.
\end{prop}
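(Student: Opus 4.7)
My plan is to generalise the informal argument given in the paragraph immediately preceding the proposition---which handles a single fixed fibre $b = \phi_1(b)$---to the case of a periodic cycle $b_1, \dots, b_N$ by setting up a precise dictionary between the Berkovich and surface sides and then translating both directions of the equivalence.

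The first step is to formalise, using the reduction maps $\redct_{b_j} : \P^1_{\an, (j)} \to X_{b_j}$ (factored through the Galois quotient $\mathfrak p : \P^1_\an(\K) \to \V$), the dictionary that (a) each Galois orbit of Type II points in $\Gamma_{(j)}$ corresponds bijectively to an irreducible component of $X_{b_j}$, and (b) each Galois orbit of $\Gamma_{(j)}$-domains corresponds to a closed point in $X_{b_j}$. Crucially, this dictionary must be dynamically compatible with the semi-conjugacy $\redct \circ \phi_* = \phi \circ \redct$: the relation $\phi_*(\zeta) \in U$ translates to the component $\redct(\zeta)$ being contracted by $\phi$ to the closed point $\redct(U)$; the relation $\phi_*(U) = V$ between $\Gamma$-domains translates to $\phi(\redct(U)) = \redct(V)$; and the relation $\phi_*(U) \supsetneq \{\xi_1, \dots, \xi_t\}$ (for Type II points $\xi_\ell$) translates to $\redct(U)$ being indeterminate with $\phi(\redct(U))$ containing the irreducible components $\redct(\xi_\ell)$. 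Establishing these three rules by unwinding the definitions is routine.

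For the forward direction, given a destabilising orbit $p_0, \dots, p_{n-1}$ in the fibres with destabilising curve $C$ contracted to $p_0$ and inverse destabilising curve $D$ irreducible in $\phi(p_{n-1})$, each object lies in one specific fibre along the cycle. Lifting through the dictionary yields Type II sets $\{\zeta_1, \dots, \zeta_s\} \leftrightarrow C$ and $\{\xi_1, \dots, \xi_t\} \leftrightarrow D$, as well as $\Gamma$-domains $U_i \leftrightarrow p_i$. The three hypotheses translate to $\phi_*(\zeta_j) \in U_0$ for all $j$, $\phi_*(U_i) = U_{i+1}$ for $0 \le i \le n-2$, and $\phi_*(U_{n-1}) \supsetneq \{\xi_\ell\}$. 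The last makes $U_{n-1}$ a J-domain, and by backward induction each $U_i$ is a J-domain, so each $\zeta_j$ is a destabilising point (the ``in particular'' statement being the special case $s=1$). Conversely, for the backward direction, given a destabilising $\zeta \in \Gamma_{(j)}$, I would invoke \autoref{prop:stabskew:analyticstability} (extended to periodic chains via \autoref{rmk:skewstab:extendedfatoujulia}) to replace $\zeta$ by its last iterate lying in $\Gamma$ and assume $\phi_*(\zeta) \in U_1$ for a J-domain $U_1$; taking the smallest $m \ge 1$ with $\phi_*^{m-1}(U_1) \cap \Gamma \ne \emp$ produces a chain $U_1, \dots, U_{m-1}$ of $\Gamma$-domains with $\phi_*(U_{m-1})$ containing a Type II point of $\Gamma$. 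Translating yields the desired destabilising orbit $p_1, \dots, p_{m-1}$ with contracted curve $E = \redct(\zeta)$ and indeterminate endpoint $p_{m-1}$.

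The main obstacle I anticipate is the Galois bookkeeping. The reduction factors as $\redct = \mathfrak p \circ \redct'$ through the valuative tree, so an irreducible curve in the fibre (defined over $\k$) typically corresponds to a full Galois orbit of Type II points in $\Gamma_{(j)} \subset \P^1_\an(\K)$, and likewise a closed point of the fibre may lift to an orbit of $\Gamma$-domains. I would conduct the entire translation at the level of Galois orbits (equivalently in the valuative tree quotient $\V$) so that the bijections in the dictionary are genuine, using the $G$-invariance of $\phi_*$ and of $\Gamma_{(j)}$ to keep the orbit-level correspondence consistent with the dynamics.
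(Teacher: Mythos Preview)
Your proposal is correct and follows essentially the same approach as the paper. The paper's argument is the informal paragraph immediately preceding the proposition: it sets up the same reduction-map dictionary (curves $\leftrightarrow$ Galois orbits of Type II points in $\Gamma$, closed points $\leftrightarrow$ Galois orbits of $\Gamma$-domains), carries out the forward direction exactly as you do, and then dispatches the converse with ``One can check'' and the periodic generalisation with ``After generalising this to periodic fibres''; your explicit use of \autoref{prop:stabskew:analyticstability} for the converse and your attention to the Galois quotient $\mathfrak p : \P^1_\an(\K) \to \V$ simply fill in details the paper leaves implicit.
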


\thesisarticle{\subsection{Finding an Analytically Stable and Smooth Vertex Set}\label{sec:thetrickybit}}{
\section{Finding an Analytically Stable and Smooth Vertex Set}\label{sec:thetrickybit}}

Through the ideas in the last section, we have reduced \autoref{thm:skewstab:periodicgeom} to the following theorem.

\begin{thm}\label{thm:extendvertexset}
 Let $\left(\phi_*^{(j)} : \P^1_{\an, (j)}\right)_{j=1}^N$ be a periodic chain of simple skew products over $\K$. Given any finite sets $\Gamma_{(j)} \subset \GV_m \subset \P^1_{\an, (j)}$ of Type II points we can find finite supersets $\Gamma'_{(j)} \supseteq \Gamma_{(j)}$ in each $\P^1_{\an, (j)}$ which are smooth and analytically stable in the chain.
\end{thm}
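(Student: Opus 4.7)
The plan is to process each initial vertex's orbit using the Fatou-Julia theory for skew products, amalgamate the finite ``target'' and ``transit'' sets so obtained, and then pass to the smooth $m$-convex hull via \autoref{prop:skewstab:smoothhull}. Throughout, \autoref{prop:skewstab:multiplicitydivides} guarantees that every iterate $\phi_*^n(\zeta)$ has generic multiplicity dividing that of $\zeta$, so the whole construction stays inside $\GV_m$ of each $\P^1_{\an,(j)}$.

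I first fix $\zeta \in \Gamma_{(j)}$ and consider the orbit $\zeta_k = \phi_*^k(\zeta)$ through the chain. Theorem M of \cite{thesis}, \autoref{cor:skewstab:typeiijuliaperperiodic}, and \autoref{thm:skewstab:wanderingdombasin} divide the situation into three cases. If some $\zeta_k$ is a Type II Julia point, the full forward orbit is preperiodic and I record it as the finite target set $T(\zeta)$. If the orbit eventually enters a cycle of periodic Fatou components, then by Theorem M each component is either indifferent (I record its finite Type II boundary as $T(\zeta)$) or attracting (I choose small $\phi_*^p$-invariant Berkovich disks around each classical attracting periodic point and record their single Type II boundary points as $T(\zeta)$). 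If the orbit wanders, then by \autoref{thm:skewstab:wanderingdombasin} it lies in the attracting basin of a periodic Type II Julia point $\xi$, and I take the finite orbit of $\xi$ as $T(\zeta)$. In every case there is a least $K = K(\zeta)$ with $\zeta_K$ inside $T(\zeta)$ or a target disk/affinoid, and I record the finite transit sequence $\Pi(\zeta) = \{\zeta_0, \dots, \zeta_{K-1}\}$.

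Next I let $\tilde\Gamma_{(j)}$ be the union of $\Gamma_{(j)}$ with the portion of every $T(\zeta) \cup \Pi(\zeta)$ lying in $\P^1_{\an,(j)}$; this is a finite set of Type II points in $\GV_m$. Let $\Gamma'_{(j)}$ be its smooth $m$-convex hull, which is finite and smooth by \autoref{prop:skewstab:smoothhull} and \autoref{prop:skewstab:smoothconvex}. I then verify analytic stability using \autoref{prop:stabskew:analyticstability}: a transit point in $\Pi(\zeta)$ maps to the next transit point or to a target, already in $\Gamma'_{(j+1)}$; and a target-boundary point maps into an invariant target disk or affinoid, which ought to be an F-domain of $\Gamma'_{(j+1)}$ by construction.

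The main obstacle is the extra flanking vertices introduced by the smooth hull: a priori these could map into J-domains of $\Gamma'_{(j+1)}$, or could intrude inside a target F-disk and split it. To resolve this I exploit \autoref{prop:skewstab:multsubtreevalency}: any flanking vertex sits in a special direction of an unflanked target point and has strictly smaller (dividing) generic multiplicity. By choosing the target disks in the Fatou cases small enough to be disjoint from every other $T(\zeta')$, and the Julia-point targets far enough up the multiplicity tree, I ensure the flanking points lie outside every target disk or affinoid. Any residual destabilising flanking point is handled by re-applying the orbit-processing step to its own (strictly lower-multiplicity) orbit. Since multiplicities strictly decrease under this recursion and are bounded below by $1$, the process terminates after at most $m$ rounds, producing the required smooth and analytically stable supersets $\Gamma'_{(j)}$.
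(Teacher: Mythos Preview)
Your high-level strategy matches the paper's---alternate between adding forward-orbit points (using the Fatou--Julia classification) and passing to the smooth $m$-convex hull---but the termination argument has a real gap.

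The problem is your multiplicity-decrease claim. Taking the smooth $m$-convex hull does not add only the flanking vertices of \autoref{prop:skewstab:smoothhull}; by definition it adds \emph{every} point of $\GV_m$ lying in $\Hull(\Lambda)$. These interior hull points can have any generic multiplicity up to $m$, not a strictly smaller one. When you iterate such a point and it falls into a J-domain, your recursion produces a new transit sequence whose smooth hull may again contain points of multiplicity $m$, and so on. There is no reason for the ``at most $m$ rounds'' bound to hold. Relatedly, the new transit points spawned in a later round can land inside the target F-disks you fixed in an earlier round, destroying them as F-domains; ``choose the disks small enough'' does not prevent this, because you do not yet know where later orbits will go.

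The paper confronts exactly this difficulty and resolves it with machinery you have not invoked. First, it proves a No-Folding Lemma (\autoref{lem:nofolding}) and enlarges $\Gamma_0$ to contain the associated tree $T$; this guarantees that intervals outside $\Hull(\tilde\Gamma_0)$ map injectively, so images of newly-added hull points cannot ``fold back'' past $\Gamma_0$. Second, it formalises your ``target disks'' as \emph{persistent F-disks} with strict axioms (in particular, they are generic directions, so smooth hulls never intrude) and proves inductively (\autoref{clm:main}) that every vertex maps either into $\Gamma_n$ or into a persistent F-disk. Termination is then argued by contradiction: if new points of some fixed multiplicity $m_+$ are added infinitely often, a delicate case split (flanked vs.\ unflanked) together with \autoref{clm:zerodirn} and the No-Folding Lemma forces the offending point to lie in a convex hull already captured by $\tilde\Gamma_n$. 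None of this follows from a bare multiplicity descent, and without the No-Folding Lemma your recursion has no mechanism to stop.
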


The idea of the proof is to alternately extend our vertex set in two different ways. We start with the vertex set $\Gamma_0 = \Gamma$ and should initially take a smooth $m$-convex hull $\leadsto \tilde\Gamma_0$.
To achieve stability, one can add points in the forward orbit of $\tilde\Gamma_0$; using the dynamical properties of $\phi_*$, we show that finitely many will do. However, it is likely that the new vertex set, $\Gamma_1$, represents a singular surface. Now, we could extend the vertex set to be smooth again, but then one should expect that we introduced new points to the smooth vertex set $\tilde\Gamma_1$, which would destroy analytic stability. One could imagine having no choice but to repeat these two steps ad infinitum. 
\[\Gamma \subseteq \tilde\Gamma_0 \subset \Gamma_1 \subset \tilde\Gamma_1 \subset \Gamma_2 \subset \tilde\Gamma_2 \subset \Gamma_3 \subset \cdots\]
This would be useless because adding infinitely many points to $\Gamma$ would correspond to blowing up $X$ infinitely many times, which is absurd. We need a \emph{finite} vertex set. In spite of this concern, our procedure alternates between adding points to restore smoothness and strategically including points in the forward orbit to restore analytic stability. 

For the process to terminate at $\tilde \Gamma_n = \Gamma_{n+1}$, we need this to be analytically stable. According to \autoref{prop:stabskew:analyticstability}, we want for every $\zeta \in \tilde \Gamma_n$ to either find $\phi_*(\zeta)$ in $\tilde \Gamma_n$ or in one of its F-domains; we cannot have $\phi_*(\zeta)$ in a J-domain.

There are two major problems to overcome, although several others appear in the proof. First, as we add points to the vertex sets, either for smoothness or stability, there is a risk we might destroy what were F-domains, or turn them into J-domains. This could turn some of our old `stable' vertices into destabilising points. Second, the algorithm could conceivably perpetuate if at every stabilisation step there is some $\zeta \in \tilde \Gamma_n \sm \Gamma_n$ with $\phi_*(\zeta) \in V$, a J-disk for $\tilde \Gamma_n$. This can only be caused by $\phi_*$ folding the subtree $\Hull(\Gamma_n)$ near $\zeta$. The purpose of the `No-Folding Lemma' below, is to add so many ramified points to $\Gamma_0$ that this behaviour is completely controlled. To solve the first problem, we design an intricate set of rules for the stabilisation step $\tilde \Gamma_n \leadsto \Gamma_{n+1}$ with termination of the sequence in mind. More specifically, in each stabilisation step we earmark areas of $\P^1_\an$ as `persistent F-domains' where no more points should be added to future vertex sets, and enforce such rules as we add sequences of vertices to future vertex sets. We also take care to reduce the amount of smoothing that will be required by any actions we take. Our procedure combines the dynamical (Fatou-Julia) theory of $\phi_*$ and the geometric multiplicity structure of $\P^1_\an$ discussed above. 

\begin{lem}\label{lem:indiffwand}
 Let $\phi_* : \P^1_\an \to \P^1_\an$ be a simple skew product and $U$ be a periodic indifferent component. Then the set of (pre)periodic points inside $U$ is connected. Moreover, every point, $\zeta \in U$ which is not (pre)periodic, lies in a `wandering disk': $\zeta \in D \subseteq U$ such that the disks $\phi_*^n(D)$ are distinct, and $\xi = \partial D$ is periodic.
\end{lem}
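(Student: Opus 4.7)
After replacing $\phi_*$ by a sufficiently high iterate, we may assume $U$ is fixed and that each of its finitely many boundary points is fixed; both conclusions of the lemma are stable under passing to iterates. The strategy is to identify a connected \emph{skeleton} $\widetilde H \subseteq \overline{U}$ consisting of the (pre)periodic points of $U$, and then show that every non-preperiodic $\zeta \in U$ lies in a wandering disk attached to $\widetilde H$.

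The first step is to analyse $\phi_*$ on the finite subtree $H := \Hull(\partial U)$. Uniqueness of tree geodesics in $\bH$ between fixed endpoints forces $\phi_*(H) \subseteq H$, and the restriction is piecewise-affine in the hyperbolic metric with positive integer slopes. The classification theorem forbids attracting dynamics inside $U$, so no segment of $H$ can be strictly contracted, for this would manufacture an attracting fixed point in $\overline U$; dually, strict expansion on any segment would push $H$ outside itself, contradicting $\phi_*(H) \subseteq H$. Hence $\phi_*|_H$ is an isometry, and one final iterate makes it the identity on $H$. I would then enlarge $H$ by adjoining every Type II periodic point of $U$, yielding a connected subtree $\widetilde H$ on which $\phi_*$ also acts as the identity (after yet another iterate), by the same geodesic argument.

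For the wandering disk claim, project a non-preperiodic $\zeta \in U$ onto $\widetilde H$ to obtain a point $\xi \in \widetilde H$ (periodic) and a direction $\bvec v \in \Dir{\xi}$ with $\zeta$ in the open disk $D := D(\bvec v) \subseteq U$; by maximality of $\widetilde H$, $D$ is disjoint from the preperiodic set. Suppose the iterates $\phi_*^n(D)$ are not pairwise distinct, so $\phi_*^n(D) = \phi_*^m(D)$ for some $n < m$. Since $\xi$ is fixed, the direction $\bvec v$ is then periodic for the tangent map $\phi_{*,\xi}$, and a standard non-Archimedean fixed-point argument applied to $\phi_*^{m-n}|_D$ produces a periodic classical point inside $D$, contradicting the hypothesis that $\zeta \in D$ is not preperiodic. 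Therefore the $\phi_*^n(D)$ are distinct and $D$ is the required wandering disk with periodic boundary $\xi$.

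The principal obstacle is ensuring that $\widetilde H$ is genuinely connected and closed: a priori, a nested sequence of periodic Type II points could accumulate onto a non-preperiodic Type III or Type IV limit, splitting $\widetilde H$ into disconnected pieces or leaking periodic behaviour outside the skeleton. Controlling this requires the no-wandering-domain theorem \autoref{thm:skewstab:wanderingdombasin} (and the surrounding structure results from \cite[\S 3]{thesis}), which in effect guarantees that within an indifferent component no wandering subcomponent can accumulate on the skeleton. A secondary technical point is the slope analysis on $H$, which relies essentially on $\phi_*$ being a \emph{simple} skew product; in the superattracting setting the analogue of this lemma fails.
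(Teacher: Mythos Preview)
Your plan is roughly sound but misses the key simplification that the paper exploits, and has one genuine gap.

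\textbf{The missed simplification.} The paper's argument rests on the single observation that $U \subset \Inj(\phi)$ and hence, by \cite[Corollary 3.59]{thesis}, every interval in $U$ maps homeomorphically and \emph{isometrically} under $\phi_*$ (since the scale factor is $\q=1$). From this the connectedness of the periodic set is immediate: if $\phi_*^n(\alpha) = \alpha$ and $\phi_*^n(\beta) = \beta$, then $\phi_*^n$ is an isometry of $[\alpha,\beta]$ onto itself fixing both endpoints, hence the identity. There is no need for your slope analysis on $H = \Hull(\partial U)$, nor for the machinery of no-wandering-domains to conclude that $\widetilde H$ is connected; the ``principal obstacle'' you flag simply evaporates once you invoke injectivity and isometry directly.

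\textbf{The genuine gap.} In your wandering-disk step you assert that if $\phi_*^{m-n}(D) = D$, then a ``standard non-Archimedean fixed-point argument'' produces a periodic \emph{classical} point inside $D$. This is false as stated: an isometric bijection of an open non-Archimedean disk need not have any classical fixed point (a translation $y \mapsto y+c$ with $|c|$ less than the radius is already a counterexample). One can salvage the idea by finding a periodic \emph{Type II} point instead, but that requires an argument. The paper sidesteps this entirely: assuming $\phi_*^n(D) = D$, both $\zeta$ and $\phi_*^n(\zeta)$ lie in the same direction $D$ at $\xi$, so the geodesics $(\xi,\zeta)$ and $(\xi,\phi_*^n(\zeta))$ share a non-trivial initial segment $(\xi,\zeta')$; the isometry with fixed endpoint $\xi$ then forces $\phi_*^n$ to be the identity on $[\xi,\zeta']$, making $\zeta' \in D$ periodic and contradicting the choice of $\xi$ as the nearest periodic point. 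This is both simpler and avoids the fixed-point issue.
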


\begin{proof}
  Since $U \subset \Inj(\phi)$, every preperiodic point in $U$ is periodic. Recall that the boundary points of $U$ are all periodic by {\cite[Theorem 3.90]{thesis}}. The connectedness of the periodic points is a simple application of {\cite[Corollary 3.59]{thesis}}, which says that intervals (in $U$ say) map homeomorphically and isometrically, because $\q = 1$ and $U \subset \Inj(\phi)$. If $\alpha, \beta$ are fixed by $\phi_*^n$ for some $n \in \N$, then $[\alpha, \beta]$ must map identically to itself under $\phi_*^n$, and so it is an interval of periodic points. Let $\zeta$ be a non-periodic in $U$. By the connectedness of periodic points, the periodic points of $U$ cannot lie in two directions at $\zeta$. Let $\xi$ be the closest periodic point to $\zeta$ in $\Hull(\partial U)$. Now consider the disk $D = \vec v(\zeta) \in \Dir\xi$. Note that $\phi_*^n(D)$ must remain disjoint from the (pre)periodic points for every $n$. If $D$ is wandering, i.e.\ the disks $\phi_*^n(D)$ are distinct, then we are done. Otherwise, since $\xi$ is periodic, $\xi \nin \phi_*^n(D)$, and $\phi_*^n(\xi) \nin D$, we have $\phi_*^n(D) = D$. It follows that $(\xi, \zeta) \cap (\xi, \phi_*^n(\zeta)) = (\xi, \zeta') \ne \emp$ for some $\zeta' \in D$. Since $\phi_*$ is isometric of $U$, it is the identity on $[\xi, \zeta']$, and so $\zeta'$ is $n$-periodic $\contra$.
\end{proof}

We can say that an open affinoid $V$ is in the \emph{attracting basin} of $\xi$ iff for some $n$, $\phi_*^n(V) = D$ is a disk, with $\xi, D$ as in the proposition. This matches the terminology for Fatou wandering components.

\begin{lem}[No-Folding Lemma]\label{lem:nofolding}
 Let $\phi_*$ be a simple skew product. Then there exists a finite subtree $T \subset \Hull(\Ram(\phi)) \subset \P^1_\an$ with finite hyperbolic diameter (to be precise, the convex hull of finitely many Type II points), with the following property. Given any interval $I = (\alpha, \beta) \subset \P^1_\an \sm T$ such that $I \subseteq \Inj(\phi)$ or the direction $\vec v(\beta)$ at $\alpha$ is disjoint from $T$, then $\phi_*$ maps $I$ injectively.
\end{lem}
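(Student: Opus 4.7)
The first case—when $I \subseteq \Inj(\phi)$—is immediate: by \cite[Corollary 3.59]{thesis}, a simple skew product acts as a local isometry on any interval contained in its injectivity locus, so $\phi_*$ restricted to $I$ is an isometry and hence injective.

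For the second case, the plan is to build $T$ as the convex hull of a finite set of Type II points chosen to capture all ``essential'' Type II folding behaviour of $\phi_*$. I would take these to be (i) the branching vertices of the tree $\Hull(\Ram(\phi))$ (Type II points of valency $\ge 3$), and (ii) for each semi-infinite branch of $\Hull(\Ram(\phi))$ running out to a classical critical point $c$, a single Type II ``truncation'' vertex placed far enough down the branch that the tangent map $T_\eta \phi_*$ at every $\eta$ beyond it preserves the two directions along the branch. The first collection is finite because a simple skew product is piecewise linear with finitely many pieces on compact subtrees (see \cite[\S3]{thesis}); the second is finite because there are finitely many such rays, and the local model $(z-c)^k \cdot (\text{unit})$ near any classical critical point $c$ guarantees a threshold beyond which the ``outward'' and ``inward'' directions along the ray are mapped to distinct directions.

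Injectivity of $\phi_*$ on $I$ now follows by contradiction. Suppose $\phi_*(\zeta_1) = \phi_*(\zeta_2)$ for distinct $\zeta_1, \zeta_2 \in I$. Then the image $\phi_*([\zeta_1, \zeta_2])$ is a closed path in $\P^1_\an$, and since $\phi_*$ is piecewise linear, the interval $[\zeta_1, \zeta_2]$ must fold at some interior Type II point $\zeta \in (\zeta_1, \zeta_2) \subseteq I$ where the tangent map $T_\zeta \phi_*$ identifies the two directions at $\zeta$ along $I$. By hypothesis $\zeta$ lies in the direction $\vec v(\beta)$ at $\alpha$, which is disjoint from $T$, so $\zeta \notin T$. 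But then $\zeta$ is a Type II ramification point of valency at most $2$ in $\Hull(\Ram(\phi))$ (all branch points are in $T$), so it must lie on a ray toward a classical critical point past the chosen truncation vertex; the local-model argument above then says the two along-ray directions at $\zeta$ are sent to distinct directions by $T_\zeta \phi_*$, contradicting the folding.

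The main obstacle is reconciling the finiteness of $T$ (as a convex hull of finitely many Type II points, hence of finite hyperbolic diameter) with the no-folding property, because $\Hull(\Ram(\phi))$ typically contains infinite rays out to classical critical points that carry Type II ramification at every point. The technical heart of the argument is therefore the local-model step verifying that along such a ray the tangent map eventually acts ``monotonically'' on the two along-ray directions, so that the unavoidable ramification beyond the truncation thresholds never produces a fold of an interval; making this precise uniformly across the finitely many branches and branching vertices—and confirming that item (i) above really is finite in our setting—is where the real work lies, and it is where I would lean most heavily on the structural results in \cite[\S3]{thesis}.
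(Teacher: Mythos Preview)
Your construction of $T$ and the overall strategy match the paper's: take the convex hull of one Type~II ``truncation'' point $\zeta(a,\varepsilon_a)$ near each classical critical point $a$, chosen so that $\Ram(\phi)\cap \overline D_\an(a,\varepsilon_a)=[a,\zeta(a,\varepsilon_a)]$ and this ray is mapped homeomorphically (this is exactly what \cite[Proposition~3.60, Theorem~3.61]{thesis} give). Your item~(i) is redundant: any branch point of $\Hull(\Ram(\phi))$ already lies in the convex hull of these truncation points.

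The gap is in the last step. You assert that at a fold point $\zeta$ ``the two along-ray directions at $\zeta$ are sent to distinct directions,'' but you have not shown that both directions of $I$ at $\zeta$ are along the ray $[a,\zeta(a,\varepsilon))$. In fact only one is forced to be: since the direction $\vec v(\beta)$ at $\alpha$ misses $T$, any path from $\zeta\in I$ to $T$ must pass through $\alpha$, so $\vec v(\alpha)=\vec v(\zeta(a,\varepsilon))$ at $\zeta$ is the along-ray direction toward $T$. The other direction $\vec v(\beta)$ at $\zeta$ may well be transverse to the ray. Your tangent-map argument then does not apply as written. The repair is easy once noticed: in the local model $z\mapsto z^d$ the along-ray direction $\vec v(\infty)$ maps to a direction distinct from the image of \emph{every} other direction (along-ray or transverse), so a fold at $\zeta$ is still impossible. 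The paper sidesteps this subtlety by arguing directly rather than via a fold point: it shows $\alpha$ itself lies on the ray, splits $I$ as (segment on the ray)$\cup$(segment in $\Inj(\phi)$), and then invokes the global statement of \cite[Theorem~3.61]{thesis}---that $D_\an(a,\varepsilon)\cap\phi_*^{-1}[\phi_*(a),\phi_*(\zeta(a,\varepsilon))]=[a,\zeta(a,\varepsilon)]$---to see the two images are disjoint.
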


Intuitively, the last condition says that $I$ is a segment of a path that hits $T$. We choose $T$ to be a large enough piece of $\Hull(\Crit(\phi)) \cap \bH$ such that $\phi_*$ restricted to $\Ram(\phi) \sm T$ is injective. This works because near a critical point of multiplicity $n$, the reduction $\overline \phi$ at Type II points is $z \mapsto z^d$, where the directions of $\Ram(\phi) \sm T$ correspond to $0$ and $\infty$. 

\begin{proof}
 If $I \subseteq \Inj(\phi)$ then injectivity is always guaranteed by {\cite[Corollary 3.59]{thesis}}. Let $a \in \Crit(\phi)$, then {\cite[Proposition 3.60, Theorem 3.61]{thesis} say} that there is an $\eps > 0$ such that $\Ram(\phi) \cap \CD_\an(a, \eps) = [a, \zeta(a, \eps)]$ and this interval is mapped homeomorphically by $\phi_*$.
 
 For each $a \in \Crit(\phi)$ pick such a point $\zeta(a, \eps)$ with $\eps \in |K|$ and let these be the endpoints of $T$, which is the convex hull of the endpoints. Clearly $T$ has finite hyperbolic diameter because these endpoints are not Type I and the maximum length path must be between two of the finitely many endpoints.
 
 Now let $I = (\alpha, \beta)$ be an interval that intersects $\Ram(\phi)$ but the direction $\vec v(\beta)$ at $\alpha$ is disjoint from $T$. Observe that by construction of $T$, $\Hull(\Ram(\phi)) \sm T$ is a disjoint union of intervals of the form $[a, \zeta(a, \eps))$ where $a \in \Crit(\phi)$. 
 By unique path-connectedness of $\P^1_\an$, we know that $I$ intersects only one such interval. Suppose we have $0 \le \delta < \eps$ minimal such that $\zeta(a, \delta) \in [\alpha, \beta]$.
 Suppose $\alpha \nin [a, \zeta(a, \eps)]$, then $\vec v(\beta) = \vec v(\zeta(a, \delta)) = \vec v(T)$, a contradiction. 
 By unique path connectedness, we can see that $(\beta, \alpha) \cap [a, \alpha) = [\zeta(a, \delta), \alpha)$ and by definition that $(\beta, \zeta(a, \delta)) \cap [a, \zeta(a, \eps)) = \emp$. Therefore each piece of \[(\beta, \alpha) =  (\beta, \zeta(a, \delta)) \cupdot [\zeta(a, \delta), \alpha)\] is mapped homeomorphically by $\phi_*$. Moreover, the conclusion of {\cite[Theorem 3.61]{thesis}} was that \[D_\an(a, \eps) \cap \phi_*^{-1}[\phi_*(a) , \phi_*(\zeta(a, \eps))] = [a , \zeta(a, \eps)]\] so our two subintervals of $(\alpha, \beta)$ have disjoint images under $\phi_*$. This completes the proof.
\end{proof}

\begin{proof}[Proof of \autoref{thm:extendvertexset}]
To give an exposition uncluttered by indices, we will reduce to the period $N=1$ case and remove all $(j)$ indices from the notation and hypotheses. At the end of the proof we make various remarks about the general case. 

Let $\Gamma \subset \P^1_\an$ and let $m_0 = \max_{\zeta \in \Gamma} \mg(\zeta)$ be its maximum generic multiplicity; hence $\Gamma \subset \GV_{m_0}$. 
For the purposes of this proof, given any finite set $\Sigma \subset \GV_{m_0}$, we denote by $\tilde \Sigma$ the smooth $m_0$-convex hull of $\Sigma$. \autoref{prop:skewstab:smoothconvex} says that any such smooth $m_0$-convex hull is  smooth. If we have already added leaves to $\Hull(\Sigma)$ as in \autoref{prop:skewstab:smoothhull} to make the points of $\Sigma$ \flanked{}, this smooth convex hull is the same as taking $\Hull(\Sigma) \cap \GV_{m_0}$. \autoref{prop:skewstab:multsubtreevalency} and \autoref{rmk:skewstab:specialgeneric} say there are at most two such `special' directions at $\zeta$ (one is $\vec v(\infty)$) possibly needing a point added to $\Sigma$. Furthermore, assuming $\Sigma$ contains a point of generic multiplicity $1$, any \unflanked{} $\zeta \in \Sigma$ is satellite and only one (of the two) special direction $\bvec v$ at $\zeta$ requires an additional vertex to make $\zeta$ \flanked{}; see \autoref{prop:skewstab:smoothhull}.
One should view this Type II point $\zeta$ of multiplicity $\mm(\zeta) = m$ as part of an edge in $\T_m$, and the additional vertex required in $\bvec v$ will be one of the vertices $\xi_1, \xi_2$ bounding this edge, with $\mm(\bvec v) = \mg(\xi_i) = m < \mg(\zeta)$.

At first let $T$ be the set from the No-Folding Lemma \ref{lem:nofolding} and let $\Gamma_0 = \Gamma \cup (T \cap \GV_{m_0})$. Then $\tilde \Gamma_0$ is the smallest smoothly $m_0$-convex vertex set containing $\Gamma$ and whose convex hull contains $T$. We proceed to construct an increasing sequence of vertex sets $\Gamma_n$, $n \in \N$, each obtained by adding points to $\tilde \Gamma_{n-1}$ according to a procedure we will describe shortly. Note that $\tilde\Gamma_0$ is smooth and so it already contains a point of generic multiplicity $1$.

\[\Gamma \subseteq \tilde\Gamma_0 \subseteq \Gamma_1 \subseteq \tilde\Gamma_1 \subseteq \Gamma_2 \subseteq \tilde\Gamma_2 \subseteq \Gamma_3 \subseteq \cdots\]

In step $n$ of the recursion, we may designate any Berkovich open disk $D$ as a \emph{persistent F-disk}. Let $\mathcal{D}_n$ denote the collection of all such disks. A persistent F-disk $D \in \mathcal{D}_n$ will obey the following axioms, which we will prove are conserved in all future steps.

\textbf{Persistent F-disk Axioms}
\begin{enumerate}[label = (\roman*), ref = Ax.(\roman*)]
 \item $\partial D \in \tilde \Gamma_n$; \label{ax:bdry}
 \item $\mm(D) = \mg(\partial D)$ (the direction $D$ is generic at $\partial D$); \label{ax:gen}
 \item $D \cap \Gamma_n = \emp$;\label{ax:fdom}
 \item $\phi_*(D) \subseteq D' \in \mathcal{D}_n$; and \label{ax:iter}
 \item $D \in \mathcal{D}_m$ for every $m \ge n$. \label{ax:pers}
\end{enumerate}
First, note that the condition that $D$ is a generic direction means that for any set $\Sigma$ disjoint from $D$, the smooth convex hull $\tilde \Sigma$ is also disjoint from $D$. This is because \autoref{prop:skewstab:smoothhull} shows that $\tilde \Sigma = \Hull(\Lambda) \cap \GV_{m_0}$ where $\Lambda \sm \Sigma$ only has points in special directions from vertices of $\Sigma$. Hence $D \cap \Gamma_n = \emp \implies D \cap \tilde\Gamma_n = \emp$.
Second, these axioms mean that $D \in \mathcal{D}_n$ will be an F-disk for $\tilde\Gamma_n$ and all future vertex sets $\tilde \Gamma_m$ and $\Gamma_{m+1}$ for every $m \ge n$. Indeed, if $D$ is disjoint from $\tilde \Gamma_n$ and $\partial D \in \tilde\Gamma_n$, then $D$ is a $\tilde\Gamma_n$-disk by definition; given $\phi_*(D) \subseteq D' \in \mathcal{D}_n$, $\phi_*(D') \subseteq D'' \in \mathcal{D}_n$ and so on, we can see $D$ is an F-domain; the rest follows by inclusions. Third, the union of the persistent F-disks at step $n$ is a forward invariant open subset of $\P^1_\an$.

\begin{clm}\label{clm:main}
 For each $\zeta \in \Gamma_n$, we will have that $\mm(\zeta) \le m_0$ and either $\phi_*(\zeta) \in \Gamma_n$ or $\phi_*(\zeta) \in D$ for some $D \in \mathcal{D}_n$.
\end{clm}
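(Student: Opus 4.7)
My plan is to establish the claim by induction on $n$, interleaved with the recursive definition of the triples $(\Gamma_n, \tilde\Gamma_n, \mathcal{D}_n)$. The multiplicity bound $\mm(\zeta) \le m_0$ should be the easy half: by \autoref{prop:skewstab:multiplicitydivides}, any forward image of a vertex in $\GV_{m_0}$ lies in $\GV_{m_0}$, and every vertex contributed by smoothing lies in $\GV_{m_0}$ by the definition of the smooth $m_0$-convex hull. So it will suffice to arrange the prescription so that every point added to $\Gamma_{n+1}$ is either a smoothing point or a forward orbit point of an existing vertex.

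The substantive content is the image condition. For each $\zeta \in \tilde\Gamma_n$ whose image $\phi_*(\zeta)$ falls in a J-domain of $\tilde\Gamma_n$, my transition $\tilde\Gamma_n \leadsto \Gamma_{n+1}$ should do one of two things: either (i) append $\phi_*(\zeta)$ to $\Gamma_{n+1}$, closing the orbit of $\zeta$ into $\Gamma_{n+1}$, or (ii) designate a new Berkovich open disk $D \ni \phi_*(\zeta)$ as a persistent F-disk, together with all forward iterates $\phi_*^k(D)$ needed to fulfil axiom \ref{ax:iter}. The dichotomy from \cite[Theorem M]{thesis} would drive this choice. If $\phi_*(\zeta)$ lies in the Julia set, then \autoref{cor:skewstab:typeiijuliaperperiodic} guarantees its orbit is preperiodic, so finitely many appended points close it; if it lies in the Fatou set, the orbit eventually enters either an attracting basin or an indifferent component, and in both cases \autoref{lem:indiffwand} (and \autoref{thm:skewstab:wanderingdombasin} for the wandering case) provides a suitable periodic or eventually-periodic disk to seed a finite chain of persistent F-disks whose union is forward-invariant.

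The main obstacle will be keeping the persistent F-disk axioms genuinely preserved through both the stabilization step $\tilde\Gamma_n \leadsto \Gamma_{n+1}$ and the subsequent smoothing step $\Gamma_{n+1} \leadsto \tilde\Gamma_{n+1}$. The delicate point is axiom \ref{ax:gen}: if a persistent F-disk $D$ is arranged to be a generic direction at $\partial D$, then smoothing can never intrude into $D$, since \autoref{prop:skewstab:smoothhull} shows that the smooth $m_0$-convex hull only adds points lying in \emph{special} directions at existing vertices. My plan is therefore to carve every new persistent F-disk as a generic direction, which is always possible because a disk around a non-special point of $\T_{m_0}$ is generic at its Type~II boundary.

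The most intricate remaining piece is to ensure that the orbit structure which drives the Fatou-Julia dichotomy remains undistorted by the repeated smoothing steps. For this I would rely on the No-Folding Lemma \ref{lem:nofolding}, which is the reason $T$ was appended to $\Gamma$ at the outset: it guarantees that every interval in $\Hull(\tilde\Gamma_n) \sm T$ is mapped injectively by $\phi_*$, so the tree structure I track is genuinely preserved under iteration and smoothing cannot manufacture new destabilizing images. With all these invariants in place, the induction closes, and termination of the outer recursion (which is the larger goal of \autoref{thm:extendvertexset}) will follow from the fact that the finiteness provided by \autoref{cor:skewstab:typeiijuliaperperiodic} and \autoref{thm:skewstab:wanderingdombasin} caps the total number of orbit points and persistent F-disks ever introduced.
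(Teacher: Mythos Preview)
Your proposal is correct and mirrors the paper's approach: Claim~\ref{clm:main} is verified \emph{by construction}, interleaved with the recursive definition of $\Gamma_{n+1}$ and $\mathcal{D}_{n+1}$; the multiplicity bound comes from \autoref{prop:skewstab:multiplicitydivides}, and the image condition is enforced case-by-case via the Fatou--Julia dichotomy (preperiodic Julia points by \autoref{cor:skewstab:typeiijuliaperperiodic}, attracting basins, wandering domains via \autoref{thm:skewstab:wanderingdombasin}, and indifferent components via \autoref{lem:indiffwand}), with new persistent F-disks carved as generic directions so smoothing never intrudes.

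One scoping remark: your final two paragraphs (the No-Folding Lemma and the finiteness/termination discussion) are not part of the proof of Claim~\ref{clm:main} itself. In the paper, the claim is an \emph{invariant} maintained at every stage regardless of whether the recursion terminates; \autoref{lem:nofolding} enters only later, in Cases~(A) and~(B), to derive a contradiction from the assumption that the sequence $(\Gamma_n)$ never stabilises. So while your instincts about where those tools belong in the larger argument are right, they should be excised from the proof of this particular claim.
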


Now we lay out the recursion for building the $\Gamma_{n+1}$ and explain why \autoref{clm:main} will hold for $n+1$. The first part of the claim will follow from \autoref{prop:skewstab:multiplicitydivides} given that we only add images of existing points of $\Gamma_{n+1}$. Initially let $\mathcal{D}_{n+1}$ be $\mathcal{D}_n$, and $\Gamma_{n+1}$ be $\tilde \Gamma_n$. For every $\zeta \in \tilde \Gamma_n$, we apply the following rules.

\begin{enumerate}[label = (\roman*), ref = (\roman*)]
 \item \label{item:evf} Suppose the iterates of $\zeta$ eventually hit a persistent F-disk, then let $N \in \N$ be the first integer such that $\phi_*^N(\zeta) \in D \in \mathcal{D}_{n+1}$. We choose to include $\phi_*^j(\zeta)$ in $\Gamma_{n+1}$ for every $0 < j < N$. These points obey the claim, with the last having its image in $D$. Practically, this rule ensures that we never add points to $\Gamma_{n+1}$ from our persistent F-disks.
 
  \item \label{item:vtx} If the iterates of $\zeta$ eventually hit another point $\xi = \phi_*^N(\zeta) \in \Gamma_{n+1}$, then we choose to include $\phi_*^j(\zeta)$ in $\Gamma_{n+1}$ for every $0 < j < N$. These points satisfy the claim.
  
   \item \label{item:preper} Suppose $\zeta$ is preperiodic. Then include all of $\Orb^+(\zeta)$ in $\Gamma_{n+1}$. Each of these points obey the claim since their images lie in $\Gamma_{n+1}$. Note that every Type II Julia point is preperiodic by the corollary to Benedetto's `no wandering domains' theorem, which is generalised to simple skew products defined over $\k((x))$ {\cite[Corollary 3.100]{thesis}}. So in other cases below, $\zeta$ must be a Fatou point.
 
 \item \label{item:attr} If $\zeta$ lies in an attracting basin $U$, let $\gamma_1, \dots, \gamma_p \in \P^1$ be the attracting cycle and observe that their multiplicities must be the same, say $m$. The iterates of $\zeta$ must converge to this cycle. Since we are not in case (i), we may assume that the $\gamma_j$ are not contained in F-disks of $\mathcal{D}_{n+1}$; hence we will create them now. Pick $M$ large enough and $t_j \in \N$ large enough, such that for every $1 \le j \le p$, 
\begin{align*}
 \phi_*^{M+j}(\zeta) &\in D_\an\left(\gamma_j,\ |x|^\frac {t_j-1}{m}\right) \sm D_\an\left(\gamma_j,\ |x|^\frac {t_j}{m}\right),\\
 \phi_*^{M+p+1}(\zeta) &\in D_\an\left(\gamma_1, |x|^\frac {t_1}{m}\right),\\
 D_\an\left(\gamma_j, |x|^\frac {t_j}{m}\right) &\subset U \sm \Gamma_{n+1},\\
 \text{and \quad } \mm\left(D_\an\left(\gamma_j, |x|^\frac {t_j}{m}\right)\right) &= m.
\end{align*}
 Now include $\phi_*^r(\zeta)$ into $\Gamma_{n+1}$ for every $r \le M+p$, and the $D_j = D_\an(\gamma_j, |x|^\frac {t_j}{m})$ into $\mathcal{D}_{n+1}$. Note that $\phi_*(D_j) = D_{j+1}$ for $1 \le j < p$ and $\phi_*(D_p) \subset D_1$. One can check that $\mm(\partial D_j) = \mg(\partial D_j) = m$ and so $\zeta(\gamma_j, |x|^\frac {t_j}{m}) = \partial D_j$ is a free point. We must show that this point will be needed to form a smooth $m_0$-convex set with $\phi_*^{M+j}(\zeta)$ and $\Gamma_{n+1}$. 
 If $\partial D_j$ is on the path between $\phi_*^{M+j}(\zeta)$ and $\Gamma_n$, then all such free points of multiplicity at most $m_0$ will be included in $\tilde\Gamma_{n+1}$, thus guaranteeing $\partial D_j \in \tilde\Gamma_{n+1}$. Otherwise the join $\xi_j = \phi_*^{M+j}(\zeta) \wedge \partial D_j$ of these two points is in the annulus written above; see \autoref{fig:attrbasin}. Hence \[\xi_j \in \left(\zeta\left(\gamma_j,\ |x|^\frac {t_j-1}{m}\right), \zeta\left(\gamma_j,\ |x|^\frac {t_j}{m}\right)\right),\] so $\xi_j$ is satellite with $\mm(\xi_j) = m$ and $m < \mg(\xi_j) \le m_0$. 
  By the discussion at the start of the proof about smooth convex hulls, $\tilde\Gamma_{n+1}$ must also incorporate the nearest free point in the special direction $\vec v(\gamma_j)$ at $\xi_j$, namely $\partial D_j$. We have verified the axioms for these new persistent F-disks $D_j \in \mathcal{D}_{n+1}$. Since $\phi_*^{M+p+1}(\zeta) \in D_1$, the claim is satisfied in this case.
    \begin{figure}
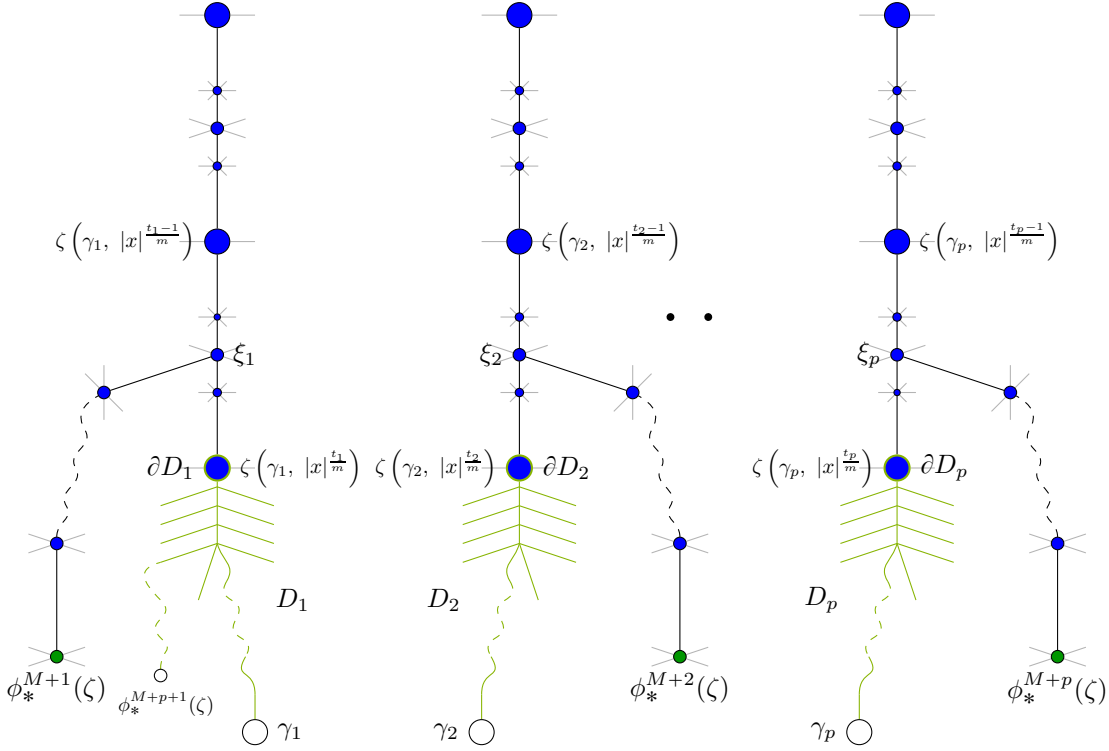

  \centering
   \resizebox{\textwidth}{!}{\tikzfig{attrbasin6fin}}
  \caption{An attracting basin, shown with coloured vertices of {\gamcol$\Gamma_{n+1}$}, {\smgamcol$\tilde\Gamma_{n+1}$} and persistent F-disks of {\gamdomcol$\mathcal D_{n+1}$}.}% \gampluscolname{}}
  \label{fig:attrbasin}
\end{figure}%
 
 \item \label{item:wand} If $\zeta$ is in a wandering Fatou component $U$, by the `no wandering domains theorem' {\cite[Theorem 3.99]{thesis}} $\zeta$ is in the attracting basin of some cycle $\xi_1, \dots, \xi_p$. To be precise, there is a minimal $N \ge 0$ such that for each $0 \le j \le p$, $\phi_*^{N+ j}(U)$ is a residue class at $\xi_j$. Furthermore, we can increase $N$ such that for every $t > N$, $\phi_*^t(U)$ is a generic direction and is disjoint from $\Gamma_{n+1} = \emp$; by not being in case (i), we also know these are not disks in $\mathcal{D}_n$. Now we choose to include $\phi_*^t(\zeta)$ in $\Gamma_{n+1}$ for every $t \le N+p$, and $\phi_*^t(U)$ into $\mathcal{D}_{n+1}$ for every $t > N+p$. The $\xi_j$ have multiplicity at most $m_0$ because they each have infinitely many directions containing points of multiplicity at most $m_0$. Now observe that by including $\phi_*^{N+1}(\zeta), \dots, \phi_*^{N+p}(\zeta)$ we ensure that each $\xi_j$ lies on a closed interval between $\phi_*^{N+j}(\zeta)$ and $\Gamma_n$. Therefore we will find $\xi_j = \partial(\phi_*^{N+j+pl}(U)) \in \tilde \Gamma_{n+1}$ for every $l \ge 0$ and $1 \le j \le p$. This shows that $\phi_*^t(U) \in \mathcal{D}_{n+1}$ are good persistent F-disks for $t > N+p$. Since $\phi_*(\phi_*^{N+p}(\zeta)) \in \phi_*^{N+p+1}(U)$, one can see that we have satisfied the claim with $\phi_*^t(\zeta)$ for every $0 \le t \le N+p$.
 
\item \label{item:indiffwand} If $\zeta$ is not preperiodic but eventually in the indifference domain, then by \autoref{lem:indiffwand} $\exists N \in \N$ such that $\phi_*^N(\zeta)$ lies in the `attracting basin' of a cycle $\xi_1, \dots, \xi_p$. We proceed as in the previous case.
\end{enumerate}

This finishes the description of the recursion; we have verified \autoref{clm:main} and upheld the axioms of the persistent F-disks.

Now suppose that the sequence $(\Gamma_n)$ is eventually constant with $\Gamma_{n+1} = \tilde \Gamma_n$ for some $n \in \N$. Then $\Gamma_{n+1} = \Gamma'$ must be an analytically stable (by \autoref{clm:main}), smooth (by \autoref{prop:skewstab:smoothconvex}), finite set of Type II points. We are done.
 
 Otherwise, we suppose for contradiction that each new set $\Gamma_n$ is strictly larger. In each smoothing and stabilisation step, the procedure adds points whose multiplicity does not exceed that of the points added in the previous step; see \autoref{prop:skewstab:smoothhull} in the smooth case and \autoref{prop:skewstab:multiplicitydivides} in the iterative case. The multiplicity of new points added in step $n$ is a deceasing sequence of positive integers, so it must eventually be constant. Hence, let $m_+ \le m$ be the largest multiplicity appearing in $\Gamma_{n+1} \sm \tilde \Gamma_n$ for infinitely many $n$. 
 The rest of the proof forks into two major cases.
\begin{enumerate}[label = (\Alph*)]
\item For infinitely many $n$, there are \unflanked{} satellite points in $\Gamma_{n+1}$ of multiplicity $m_+$.
\item For large enough $n$, every multiplicity $m_+$ point in $\Gamma_n$ is \flanked{}, so $\tilde \Gamma_n$ is the $m_0$-convex hull of $\Gamma_n$.
\end{enumerate}

The following claim will support the remainder of the proof.

\begin{clm}\label{clm:zerodirn}
 Let $\zeta \in \GV_{m_0} \sm \tilde\Gamma_0$ and let $\alpha$ be the closest point of $\tilde\Gamma_0$ to $\zeta$; define $\bvec u = \vec v(\Gamma_0) = \vec v(\alpha)$ at $\zeta$. Suppose that $\phi_*(\zeta) \nin \tilde\Gamma_1$ and $\phi_*(\zeta) \nin D$ for any persistent F-disk $D \in \mathcal D_1$. Then $\phi_*$ maps $(\alpha, \beta)$ injectively to $(\phi_*(\alpha), \phi_*(\beta))$, and $\phi_\#(\bvec u) = \vec v(\Gamma_0) = \vec v(\phi_*(\alpha))$ at $\phi_*(\zeta)$. Moreover every $\bvec v \ne \bvec u$ at $\zeta$ is a good direction at $\zeta$ with $\phi_\#(\bvec v) \ne \vec v(\Gamma_0)$.
\end{clm}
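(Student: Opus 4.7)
The plan is to reduce this claim to a direct application of the No-Folding Lemma (\autoref{lem:nofolding}), exploiting the fact that $\Gamma_0$ was constructed to contain $T \cap \GV_{m_0}$ so that the ``bad'' locus where $\phi_*$ can fold has already been enclosed by $\tilde\Gamma_0$. Throughout, I interpret $\beta$ as any point lying in a direction $\bvec v \ne \bvec u$ at $\zeta$, so that $I = (\alpha, \beta)$ is an interval through $\zeta$.

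First I will set up the geometric picture. Since $\tilde\Gamma_0$ is the smooth $m_0$-convex hull of $\Gamma_0 = \Gamma \cup (T \cap \GV_{m_0})$, the convex hull $\Hull(\tilde\Gamma_0)$ contains $T$ (up to endpoints of $T$ with $\mg > m_0$, which do not lie between $\alpha$ and $\zeta$ since $\zeta \in \GV_{m_0}$ and the multiplicity subtree $\T_{m_0}$ is convex). Because $\alpha$ is the closest point of $\tilde\Gamma_0$ to $\zeta$, the direction $\vec v(\zeta) \in \Dir\alpha$ is disjoint from $\tilde\Gamma_0$, and therefore disjoint from $T$. Dually, the direction $\bvec u = \vec v(\alpha)$ at $\zeta$ is the unique direction containing $T$, so every $\bvec v \ne \bvec u$ at $\zeta$ is itself disjoint from $T$.

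Next I apply the No-Folding Lemma. For $\beta$ in any direction $\bvec v \ne \bvec u$ at $\zeta$, the open interval $I = (\alpha, \beta)$ passes through $\zeta$; its piece $(\alpha, \zeta)$ lies in the direction $\vec v(\zeta) \in \Dir \alpha$, and its piece $(\zeta, \beta)$ lies in $\bvec v \in \Dir\zeta$. By the previous paragraph both are disjoint from $T$, and $\zeta \notin T$ because $\zeta \notin \tilde\Gamma_0 \supseteq T \cap \GV_{m_0}$ while $\zeta \in \GV_{m_0}$. Furthermore, $\vec v(\beta) \in \Dir\alpha$ coincides with $\vec v(\zeta) \in \Dir\alpha$, which is disjoint from $T$. \autoref{lem:nofolding} applies and gives that $\phi_*|_I$ is an injection with image $(\phi_*(\alpha), \phi_*(\beta))$, which is the first half of the statement.

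The remaining conclusions now follow from this injectivity together with the hypothesis on $\phi_*(\zeta)$. Injectivity of $\phi_*$ on $(\alpha, \zeta)$ forces $\phi_\#(\bvec u) = \vec v(\phi_*(\alpha))$ at $\phi_*(\zeta)$; because $\alpha \in \tilde\Gamma_0$, \autoref{clm:main} for $n=0$ puts $\phi_*(\alpha)$ in $\Gamma_1 \cup \bigcup_{D \in \mathcal D_1}D$, and the assumption $\phi_*(\zeta) \nin \tilde\Gamma_1$ together with $\phi_*(\zeta) \nin D$ for every $D \in \mathcal D_1$ guarantees that $\vec v(\Gamma_0) = \vec v(\phi_*(\alpha))$ is an unambiguous direction at $\phi_*(\zeta)$. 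For any second direction $\bvec v \ne \bvec u$ at $\zeta$, applying the previous step to a point $\beta \in \bvec v$ makes $\phi_*$ injective on a neighbourhood of $\zeta$ inside $\bvec v$, so $\bvec v$ is a good direction, and the distinct endpoints $\phi_*(\alpha), \phi_*(\beta)$ of $\phi_*(I)$ on opposite sides of $\phi_*(\zeta)$ force $\phi_\#(\bvec v) \ne \phi_\#(\bvec u) = \vec v(\Gamma_0)$. The main obstacle I anticipate is cleanly handling the endpoints of $T$ whose multiplicity exceeds $m_0$; this is where the convexity of $\T_{m_0}$ and the requirement $\zeta \in \GV_{m_0}$ must be used carefully to ensure the interval $(\alpha, \zeta)$ is genuinely disjoint from $T$, rather than just from $T \cap \GV_{m_0}$.
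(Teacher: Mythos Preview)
Your approach is the same as the paper's: use $T \subset \Hull(\tilde\Gamma_0)$ to invoke the No-Folding Lemma on $(\alpha,\beta)$, then pin down the direction $\vec v(\Gamma_0)$ at $\phi_*(\zeta)$ using \autoref{clm:main} applied to $\alpha$. The structure is right, but the central step is asserted rather than argued.

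The sentence ``the assumption $\phi_*(\zeta)\notin\tilde\Gamma_1$ together with $\phi_*(\zeta)\notin D$ \dots\ guarantees that $\vec v(\Gamma_0)=\vec v(\phi_*(\alpha))$'' is precisely the content of the claim, and you have not explained \emph{why} it holds. The paper runs a contradiction: if $\vec v(\phi_*(\alpha)) \ne \vec v(\Gamma_0)$ at $\phi_*(\zeta)$, then $\phi_*(\zeta)$ sits between $\Gamma_0$ and either $\phi_*(\alpha)\in\Gamma_1$ or (after crossing a boundary) $\partial D\in\tilde\Gamma_1$; since $\mg(\phi_*(\zeta))\le m_0$ and $\tilde\Gamma_1$ is $m_0$-convex, this forces $\phi_*(\zeta)\in\tilde\Gamma_1$, contrary to hypothesis. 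You have identified the right ingredients (Claim~\ref{clm:main}, the two hypotheses on $\phi_*(\zeta)$) but not the mechanism ($m_0$-convexity of $\tilde\Gamma_1$) that binds them. Also note that \autoref{clm:main} applies because $\alpha\in\tilde\Gamma_0\subseteq\Gamma_1$, so the relevant index is $n=1$, not $n=0$.

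A smaller point: ``$\alpha$ is the closest point of $\tilde\Gamma_0$ to $\zeta$'' does not by itself give that $\vec v(\zeta)\in\Dir\alpha$ misses $\tilde\Gamma_0$ (in an arbitrary tree it can fail). The paper instead takes $\alpha$ to be the nearest point of $\Hull(\tilde\Gamma_0)$ and then uses $m_0$-convexity and $\zeta\in\GV_{m_0}$ to show $\alpha\in\tilde\Gamma_0$; from that vantage the direction is automatically disjoint from the hull, hence from $T$. Your worry about endpoints of $T$ with $\mg>m_0$ is a red herring: the paper simply uses $T\subset\Hull(\tilde\Gamma_0)$, which was arranged in the construction of $\Gamma_0$.
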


\begin{figure}[H]
\centering
\begin{subfigure}{.4\textwidth}
  \centering
   \tikzfig{claimzerodirn}
  \caption{The claimed situation.}
  \label{fig:claimzerodirn:aim}
\end{subfigure}%
\hfill%
\begin{subfigure}{.58\textwidth}
  \centering
  \tikzfig{claimzerodirncontra}
  \caption{Contrary to the claim, with
 $\alpha \in {\gamcol\tilde\Gamma_0}$, ${\gamdomcol D} \in \mathcal D_1$ and $\partial D \in {\smgamcol\tilde\Gamma_1}$.}
  \label{fig:claimzerodirn:contra}
\end{subfigure}%
\caption{\autoref{clm:zerodirn}}
\label{fig:claimzerodirn}
\end{figure}

\begin{proof}[Proof of \autoref{clm:zerodirn}]
Let $\alpha$ be the (unique) nearest point on $\Hull(\tilde \Gamma_0)$ to $\zeta$. We first show that $\alpha \in \tilde \Gamma_0$.
If $\alpha$ is a vertex of valency at least $3$ in $\Hull(\tilde \Gamma_0 \cup \set \zeta) \subset \bigcup_{m \le m_0}\T_m$, then it must belong to $\GV_{m_0}$. Hence, or otherwise if $\alpha$ is an endpoint of $\Hull(\tilde \Gamma_0)$, we have $\alpha \in \tilde \Gamma_0$ because $\tilde \Gamma_0 = \Hull(\tilde \Gamma_0) \cap \GV_{m_0}$.

 Observe that $\bvec u = \vec v(\alpha) = \vec v(\tilde \Gamma_0) = \vec v(\Gamma_0)$. 
Let $\bvec v$ be any other direction at $\zeta$ and $\beta \in \bvec v$. Clearly $(\alpha, \beta)$ is disjoint from $\Hull(\tilde \Gamma_0) \supset T$ and the direction $\vec v(\zeta)$ at $\alpha$ does not intersect $T$. Then by \autoref{lem:nofolding}, $\phi_*$ maps $(\alpha, \beta)$ injectively; hence $\phi_\#(\bvec u) = \vec v(\phi_*(\alpha))$ and $\phi_\#(\bvec v) = \vec v(\phi_*(\beta))$ are distinct directions at $\phi_*(\zeta)$. This also means $\bvec v = \vec v(\beta)$ is a good direction. Below we will show that $\phi_\#(\bvec u) = \vec v(\Gamma_0)$, and therefore $\phi_\#(\bvec v) \ne \vec v(\Gamma_0)$.
 
 Suppose that $\phi_\#(\bvec u) \ne \vec v(\Gamma_0)$ at $\phi_*(\zeta)$; see \autoref{fig:claimzerodirn:contra}. By \autoref{clm:main} and because $\alpha \in \Gamma_1$, either $\phi_*(\alpha) \in \Gamma_1$ or $\phi_*(\alpha) \in D$ where $D \in \mathcal D_1$ is a persistent F-disk. In the former case, we have $\phi_*(\zeta)$ lying between $\Gamma_0$ and $\phi_*(\alpha) \in \Gamma_1$, so $\phi_*(\zeta) \in \tilde \Gamma_1$, against our hypotheses. In the latter case, we have that $[\phi_*(\zeta), \phi_*(\alpha)) = \phi_*([\zeta, \alpha))$ is a path from outside $D$ to inside $D$, so it contains $\partial D$. By the persistent F-disk axiom \ref{ax:bdry}, $\partial D \in \tilde\Gamma_1$, therefore $\phi_*(\zeta) \in \tilde \Gamma_1$ by $m_0$-convexity, contradicting our hypotheses.  
\end{proof}

\textbf{Case A)} Choose $n$ large enough such that for each classical critical point $\gamma \in \Crit(\phi_*)$, every $\delta \in \Orb_{\phi_*}^+(\gamma)$ in its orbit lies in a persistent F-disk if they ever will during the procedure. This is possible in finitely many steps because if for some $n'$ a single $\phi_*^j(\gamma) \in \Orb_{\phi_*}^+(\gamma)$ lies in a persistent F-disk of $\mathcal D_{n'}$, then (by \ref{ax:iter}) we have every member of $\Orb_{\phi_*}^+(\phi_*^j(\gamma))$ in persistent F-disks, leaving only finitely many other iterates $\gamma, \phi_*(\gamma), \dots, \phi_*^{j-1}(\gamma)$ to consider.

Let $\xi \in \Gamma_{n+1}$ be an \unflanked{} satellite point of multiplicity $m_+$, whose inclusion to the set was triggered by applying the rules to some $\zeta \in \tilde \Gamma_n$. More precisely, for some $j >0$, $\xi = \phi_*^j(\zeta)$ and $\phi_*^t(\zeta) \in \Gamma_{n+1}$ for each $0 \le t \le j$. We may assume, by replacing $\zeta, \xi$ if necessary, that $\phi_*^j(\zeta) \in \Gamma_{n+1}\sm \tilde\Gamma_n$ for each $0 < t \le j$, and $\xi$ is the first \unflanked{} point of multiplicity $m_+$ in this orbit.

We argue that $\zeta \nin \Gamma_n$. Suppose not, then we already applied the rules to it in step $n$. Given that $\phi_*(\zeta) \nin \Gamma_n$, we can conclude from \autoref{clm:main} that $\phi_*(\zeta)$ must lie in an F-disk $U \in \mathcal{D}_n \subseteq \mathcal{D}_{n+1}$, implying that $\phi_*(\zeta), \dots, \phi_*^j(\zeta)$ also lie in F-disks, not in $\Gamma_{n+1}$.

Note that $\mm(\zeta) \le m_+$ by our assumption on multiplicities for large $n$. Further, we claim that $\mm(\phi_*^t(\zeta)) = m_+$ for each $0 \le t \le j$. Indeed, \autoref{prop:skewstab:multiplicitydivides} says that \[\mm(\zeta) \ge \mm(\phi_*(\zeta)) \ge \mm(\phi_*^2(\zeta)) \ge \cdots \ge \mm(\phi_*^j(\zeta)) = \mm(\xi) = m_+.\]
Given that $\xi$ is \unflanked{} by $\Gamma_{n+1}$, it must be satellite, meaning $\mg(\xi) > \mm(\xi) = m_+$; see \autoref{rmk:skewstab:specialgeneric}. We also find that $\phi_*^t(\zeta)$ is satellite for every $0 \le t \le j$ because the proposition states a similar inequality for generic multiplicities 
 \[\mg(\zeta) \ge \mg(\phi_*(\zeta)) \ge \mg(\phi_*^2(\zeta)) \ge \cdots \ge \mg(\phi_*^j(\zeta)) = \mg(\xi) > m_+.\]
 
 Now let $\alpha, \beta$ be the nearest vertices of $\T_{m_+}$, i.e.\ they are free with $\mg(\alpha) = \mg(\beta) = m_+$. We may assume, by swapping roles of $\alpha, \beta$ if necessary, that $\tilde\Gamma_0$ is disjoint from the direction $\vec v(\beta)$ at $\alpha$; indeed $\Gamma_0 \cap \T_{m_+}$ contains a point $\zeta_0$ with generic multiplicity $1$. 
 If $(\alpha, \beta)$ intersected $T \subseteq \Hull(\Gamma_0)$ then we would have $\alpha, \beta \in \tilde \Gamma_0$ by construction of the smooth convex hull, so $\zeta \in \tilde \Gamma_0$, contradicting the assumption that $\zeta \in \tilde \Gamma_n \sm \Gamma_n$ ($n \ge 1$). Thus by \autoref{lem:nofolding}, $(\alpha, \beta)$ maps injectively to $(\phi_*(\alpha), \phi_*(\beta))$.
 
\begin{clm}\label{clm:mapintervals}
 For each $0 \le t \le j$, $(\phi_*^t(\zeta), \phi_*^t(\beta))$ is an interval which $\phi_*$ maps homeomorphically to \sloppy\mbox{$(\phi_*^{t+1}(\zeta), \phi_*^{t+1}(\beta))$}. Furthermore, $\Gamma_0$ lies in a direction other than $\vec v(\phi_*^t(\beta))$ at $\phi_*^t(\zeta)$. See \autoref{fig:nofolds3}.
\end{clm}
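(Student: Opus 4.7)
The plan is to induct on $t$, establishing both parts of the claim simultaneously for each $0 \le t \le j$. The base case $t = 0$ follows from the homeomorphism $(\alpha, \beta) \to (\phi_*(\alpha), \phi_*(\beta))$ that was just established via \autoref{lem:nofolding}, because $\zeta \in (\alpha, \beta)$ and so the restriction to $(\zeta, \beta)$ is also injective. The direction statement at $t = 0$ holds because $\tilde \Gamma_0$ is disjoint from the direction $\vec v(\beta)$ at $\alpha$ while $\zeta$ itself sits in that direction at $\alpha$; hence every point of $\Gamma_0 \subseteq \tilde \Gamma_0$ lies on the $\alpha$-side of $\zeta$, i.e.\ in the direction $\vec v(\alpha) \ne \vec v(\beta)$ at $\zeta$.

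For the inductive step from $t$ to $t+1$ (with $0 \le t \le j-1$), I would first derive the direction assertion at $t+1$ by applying \autoref{clm:zerodirn} to $\phi_*^t(\zeta)$. The three hypotheses can be verified as follows: both $\phi_*^t(\zeta)$ and $\phi_*^{t+1}(\zeta)$ belong to $\Gamma_{n+1} \sm \tilde \Gamma_n$ (by choice of $\zeta$, using $\zeta \in \tilde \Gamma_n \sm \Gamma_n$ when $t = 0$), so via the earlier multiplicity bound $\mg \le m_0$ and the inclusion $\tilde \Gamma_0 \subseteq \tilde \Gamma_1 \subseteq \tilde \Gamma_n$ we obtain $\phi_*^t(\zeta) \in \GV_{m_0} \sm \tilde \Gamma_0$ and $\phi_*^{t+1}(\zeta) \notin \tilde \Gamma_1$; moreover axioms \ref{ax:pers} and \ref{ax:fdom} give $\mathcal D_1 \subseteq \mathcal D_{n+1}$ with $\Gamma_{n+1}$ disjoint from every $D \in \mathcal D_{n+1}$, so $\phi_*^{t+1}(\zeta)$ lies in no persistent F-disk of $\mathcal D_1$. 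The claim then asserts that $\phi_\#$ carries every direction at $\phi_*^t(\zeta)$ distinct from $\vec v(\Gamma_0)$ to a direction at $\phi_*^{t+1}(\zeta)$ distinct from $\vec v(\Gamma_0)$. By the inductive hypothesis $\vec v(\phi_*^t(\beta)) \ne \vec v(\Gamma_0)$, together with the (a)-assertion at $t$ which identifies $\phi_\#(\vec v(\phi_*^t(\beta)))$ with $\vec v(\phi_*^{t+1}(\beta))$, this yields the direction assertion at $t+1$.

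Given the direction assertion at $t+1$, the homeomorphism assertion at $t+1$ follows by applying \autoref{lem:nofolding} to $(\phi_*^{t+1}(\zeta), \phi_*^{t+1}(\beta))$. The key point is that $\phi_*^{t+1}(\zeta) \in \GV_{m_0} \sm \tilde \Gamma_0 = \GV_{m_0} \sm (\Hull(\tilde \Gamma_0) \cap \GV_{m_0})$ forces $\phi_*^{t+1}(\zeta) \notin \Hull(\tilde \Gamma_0) \supseteq T$; consequently the connected set $T$ occupies a single direction at $\phi_*^{t+1}(\zeta)$, necessarily the one containing $\Gamma_0 \subseteq \tilde \Gamma_0$, which by the direction assertion at $t+1$ is distinct from $\vec v(\phi_*^{t+1}(\beta))$. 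I expect the main obstacle to be the bookkeeping around \autoref{clm:zerodirn}'s hypotheses at every step of the orbit --- particularly ensuring that the forward iterate $\phi_*^{t+1}(\zeta)$ never lands in a persistent F-disk of $\mathcal D_1$, which hinges on the persistence axiom \ref{ax:pers} and on our having passed to a sufficiently late stage $n$.
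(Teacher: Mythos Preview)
Your proposal is correct and follows essentially the same approach as the paper: induction on $t$, invoking \autoref{clm:zerodirn} at $\phi_*^t(\zeta)$ to push the direction assertion forward, and \autoref{lem:nofolding} to secure injectivity on the interval. The only organisational difference is that the paper packages the injectivity of $(\phi_*^t(\zeta),\phi_*^t(\beta))$ as a by-product of \autoref{clm:zerodirn} applied at step $t$ (so it derives (b) at $t{+}1$ and (a) at $t$ together), whereas you derive (a) at $t{+}1$ separately from (b) at $t{+}1$ via a direct appeal to \autoref{lem:nofolding}; this is just a harmless shift in indexing, and your hypothesis-checking for \autoref{clm:zerodirn} (via $\tilde\Gamma_0\subseteq\tilde\Gamma_1\subseteq\tilde\Gamma_n$ and $\mathcal D_1\subseteq\mathcal D_{n+1}$ disjoint from $\Gamma_{n+1}$) is in fact more explicit than the paper's.
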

  
 \begin{figure}
  \centering
   \tikzfig{nofolds3}
  \caption{Vertices of {\gamcol$\Gamma_n$}, {\smgamcol$\tilde\Gamma_n$}, {\gampluscol$\Gamma_{n+1}$}, and persistent F-disks of {\gamdomcol$\mathcal D_{n+1}$}.}
  \label{fig:nofolds3}
\end{figure}%

\begin{proof}
 We use induction to prove the claim. The base case was completed above. Assume that claim holds for some $0 \le t < j$. Let $\bvec u = \vec v(\Gamma_0)$ at $\phi_*^t(\zeta)$, which is distinct from $\vec v(\phi_*^t(\beta))$. Provided that $n > 1$, and $\phi_*^{t+1}(\zeta)$ does not lie in an F-disk, $D \in \mathcal D_{n+1}$, $\phi_*^t(\zeta)$ satisfies the hypotheses of \autoref{clm:zerodirn}. Thus $\phi_\#(\bvec u) = \vec v(\Gamma_0) \ne \phi_\#(\vec v(\phi_*^t(\beta))) = \vec v(\phi_*^{t+1}(\beta))$ at $\phi_*^{t+1}(\zeta)$. Thus by \autoref{lem:nofolding}, $\phi_*$ maps $(\phi_*^t(\zeta), \phi_*^t(\beta))$ injectively to $(\phi_*^{t+1}\zeta), \phi_*^{t+1}(\beta))$.
\end{proof}

Recall that $\xi$ is \unflanked{} by $\Gamma_{n+1}$. Now, using \autoref{clm:mapintervals}, we must have $\phi_*^j(\beta) \nin \Gamma_{n+1}$ because $\Gamma_0 \subset \Gamma_{n+1}$ lies in a different direction. 
Applying \autoref{clm:main} inductively to $\beta \in \tilde \Gamma_n$, there is a $j' \le j$ such that $\phi_*^t(\beta) \in \Gamma_{n+1}$ for every $t < j'$ and $\phi_*^{j'}(\beta) \in D'$ for some $D' \in \mathcal D_{n+1}$. Further, by the axiom \ref{ax:iter} of persistent F-disks, $\phi_*^{j-j'}(D') \subset D$ for some persistent F-disk $D \in \mathcal D_{n+1}$. Therefore $\phi_*^j(\beta) \in D$.

We know that $\xi \nin D$ since $D \cap \Gamma_{n+1} = \emp$. Therefore $[\phi_*^j(\zeta), \phi_*^j(\beta))$ is a path from outside $D$ to inside $D$, so it contains $\partial D$. 

Using \ref{ax:gen} and \autoref{prop:skewstab:multiplicitydivides} we obtain a chain of inequalities \[\mm(\partial D) \le \mg(\partial D) = \mm(D) \le \mm(\phi_*^j(\beta)) \le \mg(\phi_*^j(\beta)) \le \mg(\beta) = m_+.\]
On the other hand, since $\mm(\xi) = m_+$ and $\xi$ lies between $\Gamma_0$ and $\partial D$, we have $\mm(\partial D) \ge m_+$; thus all the relations above are equalities. Therefore $\phi_*^j(\beta)$ and $\partial D$ are free points of multiplicity $m_+$. It follows that $d_\bH(\partial D, \phi_*^j(\beta))$ is a positive multiple of $\frac 1{m_+}$.

 \begin{figure}
  \centering
   \resizebox{\linewidth}{!}{\tikzfig{nofolds4}}
  \caption{Vertices of {\gamcol$\Gamma_n$}, {\smgamcol$\tilde\Gamma_n$}, {\gampluscol$\Gamma_{n+1}$},  persistent F-disks of {\gamdomcol$\mathcal D_{n+1}$}, and $\Ram(\phi)$ (bold).}
  \label{fig:nofolds4}
\end{figure}%

Consider the mapping $\phi_*^j : [\zeta, \beta) \to [\xi, \phi_*^j(\beta))$, which is a homeomorphism by \autoref{clm:mapintervals}.
Let $\zeta' \in [\zeta, \beta)$ be the unique preimage of $\partial D$ by $\phi_*^j$. 
The length of the interval $(\zeta, \beta)$ is relatively small, $d_\bH(\zeta, \beta) < d_\bH(\alpha, \beta) = \frac 1{m_+}$, so the interval $[\zeta', \beta]$ expanded in length under $\phi_*^j$. 
Hence, for some $0 \le l < j$, $(\phi_*^l(\zeta'), \phi_*^l(\beta))$ contains a non-trivial subinterval of $\Ram(\phi) \sm T$, using the contrapositive of \cite[Corollary 3.58]{thesis}.  Furthermore, the component of $\Ram(\phi) \sm T$ in question takes the form $[\alpha', c]$ where $\alpha'$ is an endpoint of $T$ and $c \in \Crit(\phi)$. 
Therefore the direction $\bvec w = \vec v(\phi_*^l(\beta))$ at $\phi_*^l(\zeta')$ contains $c$. By \autoref{clm:zerodirn} the iterates of $\bvec w$ are good directions, so $\phi_*^{j-l}(\bvec w) = \vec v(\phi_*^j(\beta))$, which is precisely $D$. Thus the persistent F-disk $D$ contains $\phi_*^{j-l}(c)$. See \autoref{fig:nofolds4}. Given our choice of sufficiently large $n$ made at the beginning of (Case A), this is an `older' persistent F-disk, meaning $D \in \mathcal{D}_n$. In the latter case, $\partial D$ already belongs to $\tilde \Gamma_n$; thus we conclude that $\xi$ is \flanked{} in $\Gamma_{n+1}$, namely by $\partial D$ in one direction and by $\Gamma_0$ in the other.
 
 \textbf{Case B)} Now suppose that some $\zeta \in \tilde \Gamma_n$ triggered the addition of a multiplicity $m_+$ point $\phi_*(\zeta)$ to $\Gamma_{n+1} \sm \tilde \Gamma_n$ through one of the rules. By the same reasoning as in Case (A), we may assume $\zeta \in \tilde \Gamma_n \sm \Gamma_n$. Given $\mm(\zeta) \le m_+$ and $\mm(\zeta) \ge \mm(\phi_*(\zeta))$ by \autoref{prop:skewstab:multiplicitydivides}, we have $\mm(\zeta) = m_+$ also.
 
 Suppose that $\zeta \nin \Hull(\Gamma_n)$, then according to \autoref{prop:skewstab:smoothhull} $\zeta$ is part of a leaf added as a result of some $\xi \in \Hull(\Gamma_n) \cap \GV_{m_0}$ not being \flanked{}. Specifically, $\xi$ is satellite, with $\vec v(\zeta)$ as a special direction which is disjoint from $\Gamma_n$. The points on $[\xi, \zeta]$ have the same multiplicity, $m_+$. We assume every point of $\Gamma_n$ is \flanked{}, so $\xi \nin \Gamma_n$. Therefore we can find points of $\Gamma_n$ in two distinct directions at $\xi$, however neither direction can be $\vec v(\zeta)$. Since there are only two special directions at $\xi$, we can deduce there is a generic direction $\bvec v$ at $\xi$ and $\xi' \in \bvec v \cap \Gamma_n$; see \autoref{fig:weirdunflanked}. Hence $\mm(\xi') \ge \mm(\bvec v) = \mg(\xi) > \mm(\xi) = m_+$.  Because $\xi \in \tilde \Gamma_n \sm \Gamma_n$, we may assume that $\xi'$ was new in $\Gamma_n \sm \tilde \Gamma_{n-1}$. This contradicts our assumption on the maximum multiplicity of new points in $\Gamma_n$.
 
   \begin{figure}
  \centering
   \tikzfig{weirdunflanked}
  \caption{$\xi \in {\smgamcol\tilde\Gamma_n} \sm {\gamcol\Gamma_n}$ is \unflanked{} in $\GV_{m_0} \cap \Hull({\gamcol \Gamma_n})$ making $\zeta \in {\smgamcol\tilde\Gamma_n}$ required. An example imagined with $m_0  = m_+ = 4$.} 
  \label{fig:weirdunflanked}
\end{figure}%
   \begin{figure}
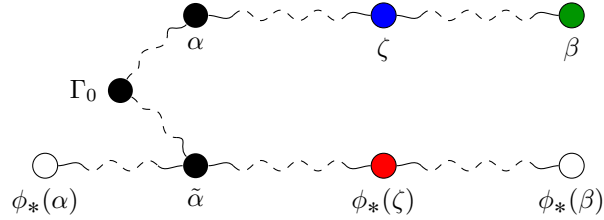

  \centering
   \tikzfig{nofold1}
  \caption{Vertices of $\tilde \Gamma_0$, {\gamcol$\Gamma_n$}, {\smgamcol$\tilde\Gamma_n$}, {\gampluscol$\Gamma_{n+1}$}.} 
  \label{fig:nofold1}
\end{figure}%
 Therefore, $\zeta$ has points of $\Gamma_n$ in two directions. First choose $\alpha$ to be the closest point in $\tilde\Gamma_0$. 
Assuming $n > 1$, we can assume that $\zeta \nin \Hull(\Gamma_0)$. In the second direction we can find $\beta \in \Gamma_n$, so $\zeta \in (\alpha, \beta)$. 
By \autoref{lem:nofolding}, this interval is mapped by $\phi_*$ injectively, so $\phi_*(\zeta) \in \phi_*((\alpha, \beta)) = (\phi_*(\alpha), \phi_*(\beta))$. Furthermore, by \autoref{clm:zerodirn}, $\Gamma_0$ lies in a direction $\vec v(\phi_*(\alpha))\ne \vec v(\phi_*(\beta))$ at $\phi_*(\zeta)$; now pick $\tilde \alpha \in \tilde\Gamma_0$ arbitrarily\footnote{It is not necessary but one can check that if $\tilde \alpha$ is chosen nearest to $\phi_*(\zeta)$ then $\tilde \alpha \in [\phi_*(\alpha), \phi_*(\zeta))$).}. See \autoref{fig:nofold1}. 
If $\phi_*(\beta) \in \Gamma_n$, then write $\tilde \beta = \phi_*(\beta)$; see \autoref{fig:caseB:1}. Otherwise suppose that $\phi_*(\beta) \nin \Gamma_n$, then by the claim, $\phi_*(\beta)$ lies in an F-disk $D \in \mathcal{D}_n$; write $\tilde \beta = \partial D$. We know that $\phi_*(\zeta) \nin D$ since otherwise it would not have been added to $\Gamma_{n+1}$. Thus $[\phi_*(\zeta), \phi_*(\beta)]$ is a path from outside to inside $D$, so it contains $\tilde \beta$; see \autoref{fig:caseB:2}. By the axioms of $\mathcal{D}_n$, we have $\partial D \in \tilde\Gamma_n$, so $\tilde\beta \in [\phi_*(\zeta), \phi_*(\beta)] \cap \tilde \Gamma_n$.  We have shown that $\phi_*(\zeta) \in [\tilde\alpha, \tilde\beta]$ and thus $\phi_*(\zeta) \in \tilde\Gamma_n$, contradicting our assumption.

\newlength\FullTextWidth
\setlength\FullTextWidth\textwidth

\begin{figure}
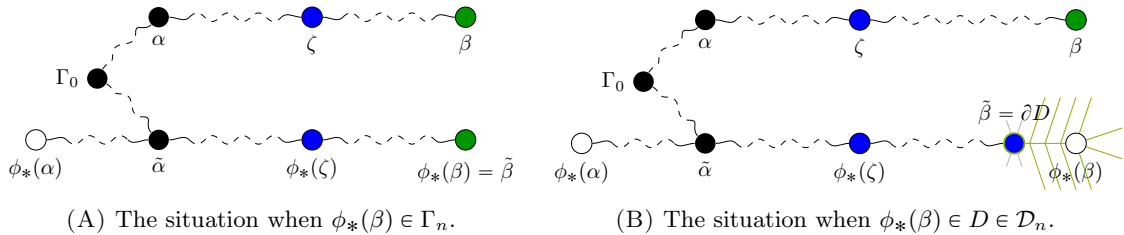

\centering
\begin{subfigure}{.47\textwidth}
  \centering
   \resizebox{\textwidth}{!}{\tikzfig{caseB1}}
  \caption{The situation when $\phi_*(\beta) \in \Gamma_n$.}
  \label{fig:caseB:1}
\end{subfigure}%
\hfill
\begin{subfigure}{.52\textwidth}
  \centering
\resizebox{\textwidth}{!}{\tikzfig{caseB2}}
  \caption{The situation when $\phi_*(\beta) \in D \in \mathcal D_n$.}
  \label{fig:caseB:2}
\end{subfigure}%
\caption{End of (Case B) with vertices in $\Gamma_0$, ${\gamcol\Gamma_n}$, $ {\smgamcol\tilde\Gamma_n}$, showing ${\gamdomcol D} \in \mathcal D_n$.}
\label{fig:caseB}
\end{figure}
 
 \textbf{Remarks on the $N$-periodic case.}
For each $1 \le j \le N$ we build a vertex set $\Gamma_{n, (j)}$ and $\tilde \Gamma_{n, (j)}$ as above. We need to start with a (no-)folding tree $T_{(j)} \subset \P^1_{\an, (j)}$ for each $j$ given by applying \autoref{lem:nofolding} to $\phi_*^{(j)} : \P^1_{\an, (j)} \to \P^1_{\an, (j+1)}$. Then we set $\Gamma_{0, (j)} = \Gamma_{(j)} \cup T_{(j)} \cap \GV_{m_0}$. We would also define collections $\mathcal{D}_{n, (j)} \subset \tilde \Gamma_{n, (j)}$ with similar axioms adjusted for periodicity, for instance $\phi_*(D) = \phi_*^{(j)}(D)\subset D' \in \mathcal{D}_{n, (j+1)}$ for every $j$ and $n$. We continue to define the strict rules for adding points to $\Gamma_{n+1, (j)}$, noting the comments in \autoref{rmk:skewstab:extendedfatoujulia}. The challenge again is to prove that if these sets grow forever as $n \to \infty$, we can find a contradiction; in particular we assume that points are added infinitely often to the vertex sets $\Gamma_{n, (j)}$ for a fixed $j$. From here on the proof essentially proceeds as it originally did. 
For instance we can find a $\zeta \in \tilde \Gamma_{n, (j)} \sm \Gamma_{n, (j)}$ such that $\phi_*^{(j)}(\zeta) \in \Gamma_{n+1, (j+1)} \sm \tilde \Gamma_{n, (j+1)}$, and again the problem breaks down into case (A) and (B).
The `no-folding' technique of the proof still works due to our initial choice to exclude the folding trees. Given an interval $(\alpha, \beta)$ not intersecting $T_{(j)}$ with $\Gamma_{0, (j)}$ in a different direction than $\beta$ at $\alpha$, we know that $(\alpha, \beta)$ is mapped injectively by $\phi_*^{(j)}$.
\end{proof}

\section{Counter-examples}\label{sec:skewcounter}

In this section we examine the failure of potential algebraic stability in skew products, showing the hypothesis in \autoref{thm:intro:stabskew} is necessary. We present the following counterexample to illustrate the theory with the simplest coefficients and indices. At the end we provide further discussion about the generality of such examples.

The hypothesis of \autoref{thm:intro:stabskew} is that for any periodic $b \in B$ is not \emph{superattracting}, i.e. $b$ is a \emph{simple} solution to $\phi_1(x) = b$. Therefore, by Riemann-Hurwitz, the base curve $B$ must be rational for any counterexample to exist. The proof of \autoref{thm:intro:stabskew} relies on two key facts about $\phi_*$. First, the (generic) multiplicity of $\phi_*^n(\zeta)$ does not increase under iteration by $\phi_*$. For any superattracting $\phi_*$, this fails for almost every Type II point in $\P^1_\an(\K)$. At the least, this would cause exponentially more points to be added to $\Gamma$ in the smoothing step. Second and more fundamentally, if $\phi_*$ is a simple non-Archimedean skew product then (remarkably) every Type II Julia point of $\phi_*$ is preperiodic. In the superattracting case, the general Julia Type II point has an infinite orbit, so they leave $\Gamma$ and they are always destabilising; see \autoref{lem:skewstab:tail}. \autoref{thm:skewstab:skewcounter} is one of a plethora of skew products with a superattracting fibre and where the associated $\phi_*$ has such a wandering Julia point $\zeta$. However, one can get `lucky', as with the skew product $\phi : X \dashto X$ in \autoref{thm:skewstab:priorcounterex}; there, the Gauss point $\zeta$ happens to be Julia but fixed. The rest of the construction of \autoref{thm:skewstab:skewcounter} relies on the divisor $E$ corresponding to $\zeta$ having an infinite backward orbit on any birationally equivalent surface.

\begin{thm}[\autoref{thm:intro:skewcounter}]\label{thm:skewstab:skewcounter}
 Consider the rational map \[\psi : (x, y) \longmapsto \left((1-x)x^2, (1-x)(x^4y^{-3} + y^3)\right)\] as defined on $\P^1 \times \P^1$. There is no birational map $\psi: X \dashto \P^1 \times \P^1$ conjugating $\psi$ or any of its iterates to an algebraically stable map, even if $X$ is allowed to be singular.
\end{thm}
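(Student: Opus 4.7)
The plan is to work at the superattracting fixed point $x_0 = 0$ of $\phi_1(x) = (1-x)x^2 = x^2 - x^3$, and translate the question into the non-Archimedean framework of \autoref{sec:vertex} and \autoref{sec:thetrickybit}. Completing the local ring of $\P^1$ at $0$, the induced skew product $\phi_* : \P^1_\an(\K) \to \P^1_\an(\K)$ has scale factor $q = 1/2$, so in contrast to the hypotheses of \autoref{thm:intro:stabskew}, $\phi_*$ is superattracting. The first key observation is that $\zeta_G$ is a Julia fixed point of $\phi_*$: reducing modulo $x$ gives $\phi_2(0,y) = y^3$, so the reduction of $\phi_*$ at $\zeta_G$ is the degree-$3$ polynomial $y \mapsto y^3$, which places $\zeta_G$ in the Berkovich Julia set.

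Next, I would construct an infinite backward orbit for $\zeta_G$. Restricting attention to the axis of Type~II points $\zeta(0,r)$, a direct computation of $\norm[\zeta(0,r)]{\phi_2}$ separates into two regimes: for $r > |x|^{2/3}$ one gets $\phi_*(\zeta(0,r)) = \zeta(?, r^{3/2})$, while for $r < |x|^{2/3}$ one gets $\phi_*(\zeta(0,r)) = \zeta(?, |x|^2 r^{-3/2})$. In particular $\phi_*^{-1}(\zeta_G)$ contains both $\zeta_G$ itself and the distinct Type~II point $\zeta(0, |x|^{4/3})$; iterating this branching produces an infinite binary tree of Type~II points $\{\zeta_n\}_{n \ge 0}$ with $\phi_*^n(\zeta_n) = \zeta_G$ and $\zeta_n \ne \zeta_m$ for $n \ne m$. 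This is the infinite backward orbit promised in the introduction.

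Now suppose for contradiction that some iterate $\psi^N$ admits an algebraic stabilization $\rho : (\hat\psi, \hat X) \dashto (\psi^N, \P^1 \times \P^1)$, with $\hat X$ possibly singular. After absorbing the reduction of \autoref{thm:skewstab:nonperiodic}, we may assume that every destabilizing orbit of $\hat\psi$ lies in a $\phi_1^N$-periodic fibre, and in particular we concentrate on the fibre over $x=0$, which is fixed by $\phi_1^N$. Through the reduction map, the divisors of $\hat X_0$ correspond to a finite $G$-invariant vertex set $\Gamma \subset \P^1_\an(\K)$, and by \autoref{prop:stabskew:analyticallystable}, algebraic stability of $\hat\psi$ over this fibre is equivalent to analytic stability of $(\phi_*^N, \Gamma)$. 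The remaining task is to show no finite $\Gamma$ can be analytically stable for any $N$.

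The main obstacle is making the last step rigorous: we must convert the infinite backward orbit into a genuine destabilizing point of $\Gamma$, not merely an accumulation phenomenon. The strategy is that either $\zeta_G \in \Gamma$, in which case the $y\mapsto y^3$ reduction forces some Type~II preimage direction at $\zeta_G$ to map across $\Gamma$ and thus produce a J-direction, so $\zeta_G$ is itself destabilizing; or $\zeta_G \notin \Gamma$, in which case we pick $n$ large enough so that $\zeta_n$ lies in a $\Gamma$-domain $U$. Because $\phi_*^n(U)$ contains $\zeta_G$, which is a Julia point whose forward orbit meets $\Gamma$ (since the iterates $\phi_*^m(\zeta_G) = \zeta_G$ cannot be uniformly separated from $\Gamma$ by a persistent F-tower, as superattractivity prevents such a tower from being finite), the $\Gamma$-domain $U$ must be a J-domain. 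The backward-transport argument in the proof of \autoref{prop:stabskew:analyticstability} then delivers a genuinely destabilizing point inside $\Gamma$, contradicting analytic stability. Since $N$ was arbitrary and the construction only uses the existence of infinitely many iterated $\phi_*^N$-preimages of $\zeta_G$ (a property preserved under taking iterates), no iterate of $\psi$ can be algebraically stabilized on any model.
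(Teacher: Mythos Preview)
Your approach has a fundamental gap: the claim that no finite vertex set $\Gamma$ over $x=0$ can be analytically stable for $\psi_*$ is false. The Gauss point $\zeta_G$ is indeed a \emph{fixed} Julia point, but precisely because it is fixed, the singleton $\Gamma = \{\zeta_G\}$ is already analytically stable --- the unique vertex maps to itself, so there is nothing to destabilise. More tellingly, your argument uses only the dynamics of $\psi_*$ on the single Berkovich line over $x=0$, and on that line $\psi_*$ agrees (since $(1-x)$ is a unit there) with the skew product $\phi(x,y)=(x^2,\, x^4y^{-3}+y^3)$ of \autoref{thm:skewstab:priorcounterex}, which \emph{is} algebraically stable on a Hirzebruch surface. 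Any argument confined to the fibre over $0$ therefore cannot distinguish $\psi$ from that stabilisable example, so it cannot possibly succeed. Your case split in the last paragraph fails concretely: if $\zeta_G \in \Gamma$ then $\psi_*(\zeta_G)=\zeta_G\in\Gamma$ and $\zeta_G$ is not destabilising; if $\zeta_G \notin \Gamma$, exhibiting a J-domain containing $\zeta_G$ does not produce a destabilising \emph{vertex}, and the backward-transport step of \autoref{prop:stabskew:analyticstability} only shortens an existing destabilising orbit rather than manufacturing one.

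The genuine obstruction lives in the base curve $B$, not in a single Berkovich fibre. The factor $(1-x)$ gives $x=1$ as an extra preimage of $0$ under $\psi_1$, and since $0$ is not totally invariant, $\Orb^-_{\psi_1}(0)$ is infinite. Any birational modification $\rho : Y \dashto \P^1\times\P^1$ alters only finitely many fibres, so some $c \in \psi_1^{-N}(1)$ survives with $Y_c \cong \P^1$ intact. That whole fibre is the Gauss point in \emph{its own} Berkovich line $\P^1_{\an,(c)}$, and following the preperiodic chain one computes $\psi_*^{N+1}(\zeta(0,1)) = \zeta(0,|x|)$ in the Berkovich line over $0$. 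The decisive dynamical input is \autoref{lem:skewstab:intervaljulia}: $\zeta(0,|x|)$ is Julia and \emph{not preperiodic} (its height parameter $t=1$ wanders under the tent map $T_\phi$). \autoref{lem:skewstab:tail} then forces $Y_c$ to be a destabilising curve on $Y$. In short, the infinite backward orbit you need is in $B$, not inside $\P^1_\an$, and the relevant Julia point is the wandering $\zeta(0,|x|)$, not the fixed $\zeta_G$.
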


In \cite{stability} the author gave the following example given to demonstrate that a rational map $\phi$ can be potentially algebraically stable without there existing a stabilisation via birational morphism.

\begin{thm}[{\cite[Theorem 6]{stability}}]\label{thm:skewstab:priorcounterex}
 Let $\phi :\C^2 \dashto \C^2$ be given by \[(x, y) \longmapsto (x^2, x^4y^{-3} + y^3).\]
 Then $\phi$ extends to an algebraically stable rational map $\phi :X \dashto X$ of a Hirzebruch surface $X$. 
 If however $\sigma : (\tilde\phi, \tilde X) \to (\phi, X)$ is the point blowup of $(0, 0) \in X$, then there does not exist a birational morphism $\pi : (\psi, Y) \to (\tilde\phi, \tilde X)$ which stabilises $\tilde \phi$. Furthermore this remains true even if $Y$ is allowed to be singular or if we replace $\phi$ by an iterate $\phi^j$.
\end{thm}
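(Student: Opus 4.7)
The plan is to handle Part 1 (algebraic stability on a Hirzebruch surface $X$) by a concrete birational-geometric computation, and Part 2 (non-existence of a birational-morphism stabilisation over $\tilde X$) by translating to the non-Archimedean framework of \autoref{sec:vertex}.

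For Part 1, I would begin on $X_0 = \P^1 \times \P^1$ and identify the destabilising orbits of $\phi$. Writing $\phi_2 = (y^6 + x^4)/y^3$, the curve $\{x = \infty\}$ is contracted to $(\infty, \infty)$, and $(\infty, \infty)$ is itself indeterminate for $\phi$ (in homogeneous coordinates both numerator and denominator of $\phi_2$ vanish there); this is the unique destabilising orbit. Replacing $X_0$ by a suitable Hirzebruch surface $\mathbb{F}_k$ via an elementary transformation moves the contracted section away from the indeterminacy. A direct check of \autoref{thm:intro:equivalence} on the resulting $X = \mathbb{F}_k$ then establishes algebraic stability of $\phi : X \dashto X$.

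For Part 2, suppose for contradiction that $\pi : (\psi, Y) \to (\tilde\phi, \tilde X)$ is a birational morphism with $\psi$ algebraically stable; by \autoref{prop:galois:betterskewprod} we may take $Y$ smooth. Over the superattracting fixed fibre $x=0$, the associated non-Archimedean skew product $\phi_*$ has scale factor $q = 1/2$, and by \autoref{prop:stabskew:analyticallystable} algebraic stability of $\psi$ is equivalent to analytic stability of $\Gamma(Y) \subset \P^1_\an(\K)$. Because $Y$ dominates $\tilde X = \operatorname{Bl}_{(0,0)} X$ via a birational morphism, $\Gamma(Y)$ contains both $\zeta_G = \zeta(0, 1)$ (the Gauss point, corresponding to the original fibre of $X$) and $\zeta_0 = \zeta(0, |x|)$ (corresponding to the exceptional divisor $E$). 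A direct computation from $\|\phi_*(\zeta)\|_y = \|\zeta\|_{y^3 + x^4/y^3}^{1/2}$ yields $\phi_*(\zeta(0, |x|^t)) = \zeta(0, |x|^{t'})$ where $t' = 3t/2$ for $t < 2/3$ and $t' = 2 - 3t/2$ for $t > 2/3$. Iterating from $t_0 = 1$ produces $1, 1/2, 3/4, 7/8, 11/16, 31/32, \ldots$, all lying on the geodesic through $\zeta_G$ and $\zeta_0$ towards the Type I point $y = 0$, with generic multiplicities $\mg(\zeta_n) = 2^n$ (one checks that the numerators $p_n$ in $t_n = p_n/2^n$ remain odd, since $p_{n+1}$ is either $3p_n$ or $2^{n+2} - 3p_n$). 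Since $\Gamma(Y)$ is finite with bounded multiplicities, only finitely many $\zeta_n$ can lie in it; choose the minimal $N \ge 1$ with $\zeta_N \notin \Gamma(Y)$, and let $U$ be the $\Gamma(Y)$-domain containing $\zeta_N$. Analytic stability at $\zeta_{N-1} \in \Gamma(Y)$ forces $U \in \mathcal{F}(\Gamma(Y))$.

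The hardest step will be showing this F-domain condition must fail. On the geodesic $[\zeta_G, y=0]$, $\phi_*$ acts in the exponent coordinate $t$ as a chaotic piecewise-linear expanding map of $[0, 4/3]$ onto $[0, 1]$, with constant slopes $\pm 3/2$ and a single fold at $t = 2/3$; its topological entropy is $\log(3/2)$. Consequently any nondegenerate $\Gamma(Y)$-annulus $U$ on the geodesic has iterates $\phi_*^m(U)$ whose $t$-length grows like $(3/2)^m$ modulo folding, so eventually $\phi_*^m(U)$ must contain the fixed Gauss point $\zeta_G \in \Gamma(Y)$ in its interior --- and similarly, if $U$ escapes transversely at $\zeta_N$, the residue-field dynamics $\bar\phi : y \mapsto y^3$ (whose Julia set $|y| = 1$ is highly mixing) force accumulation on $\Gamma(Y)$-vertices near $\zeta_G$ --- contradicting the F-domain property. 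This is the superattracting counterpart of the Julia-destabilisation phenomenon highlighted in \autoref{sec:skewcounter}; it substitutes for the simple-case fact \autoref{cor:skewstab:typeiijuliaperperiodic} that every Type II Julia point is preperiodic, which fails once $q < 1$. The extensions to iterates $\phi^k$ and to singular $Y$ follow at once: $\{\zeta_{kn}\}$ is still infinite with unbounded multiplicity for every $k$, and \autoref{prop:stabskew:analyticallystable} applies without any smoothness hypothesis on $Y$.
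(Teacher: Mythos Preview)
Your overall strategy matches the paper's: work in the non-Archimedean skew product $\phi_*$ over the superattracting fibre $x=0$, track the orbit of $\zeta_0 = \zeta(0,\abs{x})$ (the point corresponding to the exceptional divisor of $\sigma$), show it wanders, and show that once it leaves $\Gamma(Y)$ it sits in a J-domain. Your odd-numerator argument for non-preperiodicity is exactly the paper's.

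There is, however, a concrete error in your ``hardest step''. You assert that $\phi_*^m(U)$ eventually contains the Gauss point $\zeta_G = \zeta(0,1)$. It does not. In the exponent coordinate $t$, the tent map $T_\phi$ fixes $0$ with slope $3/2$; forward iterates of any subinterval of $(0,4/3)$ are repelled from $0$ and eventually cover $[\tfrac12,1]$, but they never reach $t=0$. The correct target is $\zeta_0$ itself (at $t=1$), which \emph{is} in $\Gamma(Y)$ by your own hypothesis and \emph{does} lie in $[\tfrac12,1]$. With that fix the argument goes through, and your ``transversal escape'' case becomes irrelevant: the open $\Gamma(Y)$-domain $U$ already contains a nondegenerate segment of the geodesic $[0,\infty]$ around $\zeta_N$, and that segment alone suffices. (Also, the appeal to \autoref{prop:galois:betterskewprod} to take $Y$ smooth is a red herring: desingularising $Y$ need not preserve algebraic stability of $\psi$, but in any case your argument---like the paper's---never uses smoothness of $Y$, only that $\zeta_0 \in \Gamma(Y)$.)

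The paper organises the same computation more cleanly by first isolating two general lemmas: any F-domain lies in the Fatou set (\autoref{lem:skewstab:fatoudomain}), hence any divisor corresponding to a \emph{wandering Julia} Type~II point is automatically destabilising (\autoref{lem:skewstab:wanderingjulia}). It then shows in \autoref{lem:skewstab:intervaljulia}, via the repelling fixed point at $t=4/5$ and density of its backward orbit, that the whole segment $t\in[0,4/3]$ is Julia. The theorem is then immediate: $\zeta_0$ is a wandering Julia point, so $E_1$ is destabilising on any $Y$ containing it. Your direct interval-expansion argument is, in effect, an unpackaged proof that $\zeta_0$ is Julia; the Fatou--Julia phrasing is what makes the paper's version reusable for \autoref{thm:skewstab:skewcounter}.
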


The similarity between the two examples is clear. Either one produces the chaos in the Berkovich projective line $\P^1_\an(\K)$ necessary to prevent algebraic stabilisation over the fibre of $x=0$, except for very special choices like $X$ in \autoref{thm:skewstab:priorcounterex}. The factor $(1-x)$ introduced into the formula for \autoref{thm:skewstab:skewcounter} gives $x=0$ an extra preimage $x=1$, forcing either $\set{x=1}$ or a curve in its backward orbit to fall into a destabilising orbit within $\set{x=0}$.

First we explain \autoref{thm:skewstab:priorcounterex} using the dynamical concepts of non-Archimedean skew products, specifically the Fatou-Julia theory. This will be much more concise and informative than the prior explanation in {\cite{stability}}. Afterwards, we extend the demonstration to prove \autoref{thm:intro:skewcounter}.

\begin{lem}\label{lem:skewstab:fatoudomain}
 Let $\phi_* : \P^1_\an \to \P^1_\an$ be a simple $\k$-rational skew product, and $\Gamma \subset \bH$ be a finite set. Then any F-domain $U \subset \mathcal{F}(\Gamma)$ is contained in the Fatou set of $\phi_*$, $U \subset \F_{\phi, \an}$.
\end{lem}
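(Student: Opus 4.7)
The plan is to verify $U \subseteq \F_{\phi, \an}$ from the definition by producing, for each $\zeta \in U$, a dynamically stable neighborhood. First I would note that since $\phi_*$ is continuous (indeed piecewise-linear and type-preserving, per the preliminaries) and $U$ is open and connected, each iterate $\phi_*^n(U)$ is connected and open. The F-domain hypothesis together with $U \cap \Gamma = \emp$ (as $U$ is itself a $\Gamma$-domain) forces $\phi_*^n(U) \cap \Gamma = \emp$ for every $n \ge 0$, so each $\phi_*^n(U)$ lies in a unique $\Gamma$-domain $V_n \in \mathcal{S}(\Gamma)$. Since $\mathcal{S}(\Gamma)$ is finite, the visited set $\mathcal{W} = \{V_n : n \ge 0\}$ is a finite subset of $\mathcal{S}(\Gamma)$ and $\bigcup_{n \ge 0} \phi_*^n(U) \subseteq \bigcup_{V \in \mathcal{W}} V$.

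If $\mathcal{W} \subsetneq \mathcal{S}(\Gamma)$, then some $\Gamma$-domain is entirely omitted by the forward orbit; being a Berkovich open disk or annulus, it contains infinitely many Type I points, so $U$ itself is a dynamically stable neighborhood of each of its points and we are done. In the complementary case $\mathcal{W} = \mathcal{S}(\Gamma)$, I would proceed instead by showing that $U$ contains no point of $\J_{\phi, \an}$; once that is established, a standard shrinking argument to a Berkovich sub-disk about each $\zeta \in U$ produces a dynamically stable neighborhood since small enough disks at Fatou points have uniformly bounded iterates inside the $V_n$. To rule out Julia points in $U$, suppose for contradiction that $\zeta \in U \cap \J_{\phi, \an}$ is of Type II. By \autoref{cor:skewstab:typeiijuliaperperiodic} $\zeta$ is preperiodic, and after replacing it with an iterate I may assume $\zeta$ is periodic with minimal period $q$. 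Since $\zeta$ lies in both $V_0 = U$ and $V_q$ and the $\Gamma$-domains are disjoint, $V_q = U$, and hence $V_{qn} = U$ for every $n \ge 0$. As a periodic Type II Julia point, $\zeta$ is repelling, so the local degree of $\phi_*^q$ at $\zeta$ is at least two. By the local Berkovich theory of simple skew products developed in \cite[\S 3]{thesis}, iterates of any sufficiently small Berkovich open disk $W$ at $\zeta$ then grow strictly in hyperbolic radius, eventually exceeding the finite hyperbolic distance from $\zeta$ to $\partial U \subseteq \Gamma$; for large $n$, $\phi_*^{qn}(W)$ must extend beyond $\partial U$, contradicting $\phi_*^{qn}(W) \subseteq \phi_*^{qn}(U) \subseteq V_{qn} = U$. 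A Type I Julia point in $U$ would then be approximated by Type II periodic Julia points nearby (the skew-product analogue of the classical density of repelling periodic points in the Julia set, as in \cite{Bene}), reducing to the Type II case.

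The principal obstacle is the hyperbolic expansion property at a repelling Type II periodic Julia point --- namely, that iterates of a small Berkovich disk at such a fixed point outstrip the finite hyperbolic distance between the fixed point and the boundary of the containing $\Gamma$-domain. A secondary technical ingredient is the density of Type II periodic Julia points in $\J_{\phi, \an}$. Both are foundational parts of the Fatou--Julia theory for simple skew products worked out in \cite[\S 3]{thesis}, modelled on the corresponding results for Berkovich rational maps in \cite{Bene}.
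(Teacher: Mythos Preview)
Your argument has a factual slip that propagates: $\mathcal{S}(\Gamma)$ is \emph{not} finite. Every Type~II vertex $\zeta_0 \in \Gamma$ has infinitely many tangent directions (parametrised by $\P^1(\k)$, with $\k$ algebraically closed hence infinite), and all but finitely many of these are $\Gamma$-disks. The case split ``$\mathcal{W} \subsetneq \mathcal{S}(\Gamma)$ versus $\mathcal{W} = \mathcal{S}(\Gamma)$'' still makes sense, but your Case~4 reasoning is where the real gap lies. The assertion that iterates of a small Berkovich disk $W$ about a repelling Type~II periodic point $\xi$ ``grow strictly in hyperbolic radius'' is not correct as stated: the boundary $\partial(\phi_*^{qn}(W)) = \phi_*^{qn}(\partial W)$ lives in the tangent direction $\overline{\phi^{qn}}(\vec v)$ at $\xi$, and $\phi_*^q$ expands hyperbolic length along $[\xi, \partial W]$ only by the ramification index of $\overline{\phi^q}$ in that direction. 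If the orbit of $\vec v$ under $\overline{\phi^q}$ avoids the critical points of the reduction, that distance stays constant forever. The escape of $\phi_*^{qn}(W)$ from $U$ \emph{does} occur, but via a different mechanism (the other residue classes at $\xi$, which $W$ contains in full, surject onto all directions once $\deg \overline{\phi^q} \ge 2$ and the missing direction is not exceptional); this is essentially Montel, not a radius calculation.

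The paper's proof is a two-line contrapositive using exactly that Montel-type result. If some $\zeta \in U$ were Julia, then by \cite[Theorem~3.83]{thesis} the union $\bigcup_n \phi_*^n(U)$ omits only a finite set of exceptional Type~I points; since $\Gamma \subset \bH$ consists of Type~II points, $\Gamma$ is not in that exceptional set, so some $\phi_*^n(U)$ meets $\Gamma$, contradicting the F-domain hypothesis. Your route can be salvaged more cleanly than you wrote it: once a Type~II Julia point in $U$ is known to be preperiodic (\autoref{cor:skewstab:typeiijuliaperperiodic}), the sequence $(V_n)$ is eventually periodic, so $\mathcal{W}$ is \emph{finite}; since $\mathcal{S}(\Gamma)$ is \emph{infinite} you are automatically back in Case~3, and $U$ is dynamically stable with no expansion argument needed. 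But this still invokes preperiodicity of Type~II Julia points and density of such points in $\J_{\phi,\an}$, both of which sit downstream of the same Montel machinery the paper applies directly.
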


\begin{proof}
 If $\zeta \in U$ were Julia, then necessarily $\bigcup_n\phi_*^n(U)$ can only omit a finite set of exceptional Type I points; see {\cite[Theorem 3.83]{thesis}}. Therefore it contains $\Gamma$ and so $U$ is a J-domain.
\end{proof}

\begin{lem}\label{lem:skewstab:wanderingjulia}
 Let $\phi : X \dashto X$ be a rational skew product and $\phi_* : \P^1_\an \to \P^1_\an$ the skew product on the Berkovich projective line induced by $\phi$ over some fibre $X_b$ fixed by $\phi$, i.e.\ $\phi_1(b) = b$. Suppose $E \subset X_b$ is a divisor corresponding to a (Type II) Julia point $\zeta \in \J_{\phi, \an}$ which is not preperiodic. Then $E$ is a destabilising curve, so $\phi$ is not algebraically stable.
\end{lem}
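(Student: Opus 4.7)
The plan is to show that $\zeta$ is a destabilising point for the vertex set $\Gamma = \Gamma(X_b)$, and then invoke \autoref{prop:stabskew:analyticallystable} to translate this into $E$ being a destabilising curve for $\phi$, whence \autoref{thm:intro:equivalence} gives that $\phi$ is not algebraically stable. Note that $\zeta \in \Gamma$ by hypothesis, since $\Gamma(X_b) = \redct_{X,b}^{-1}(X_{b,\mathrm{gen}})$ and $E$ is an irreducible component of $X_b$.

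First, I would argue that for all sufficiently large $n$, the iterate $\phi_*^n(\zeta)$ lies outside $\Gamma$. Since $\Gamma$ is a finite set of Type II points and the forward orbit of $\zeta$ is infinite (by non-preperiodicity), if the orbit visited $\Gamma$ infinitely often, the pigeonhole principle would produce $\eta \in \Gamma$ with $\phi_*^{n_1}(\zeta) = \eta = \phi_*^{n_2}(\zeta)$ for some $n_1 < n_2$. Then $\phi_*^{n_2-n_1}(\eta) = \eta$, making $\eta$ periodic and $\zeta$ preperiodic, contradicting the hypothesis. Hence for all sufficiently large $n$, $\phi_*^n(\zeta)$ lies in some $\Gamma$-domain $U_n$.

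Next I would pick any such $n$ and argue $U_n$ must be a J-domain. Since $\zeta \in \J_{\phi,\an}$ and the Julia set is completely invariant (\autoref{rmk:skewstab:extendedfatoujulia}), we have $\phi_*^n(\zeta) \in \J_{\phi,\an}$, so $U_n$ meets the Julia set. But \autoref{lem:skewstab:fatoudomain} tells us that any F-domain for $\Gamma$ is contained in the Fatou set $\F_{\phi,\an}$. Hence $U_n$ cannot be an F-domain, and so $U_n \in \mathcal{J}(\Gamma)$. By the definition of destabilising point, $\zeta$ is then a destabilising vertex of $\Gamma$.

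Finally, the last sentence of \autoref{prop:stabskew:analyticallystable} translates this analytic destabilisation of $\zeta$ directly into the statement that the corresponding irreducible curve $E \subset X_b$ is a destabilising curve for $\phi$. Combined with the geometric criterion \autoref{thm:intro:equivalence}, this shows $\phi$ is not algebraically stable. The only delicate step --- really the heart of the argument --- is the appeal to \autoref{lem:skewstab:fatoudomain} in the generality required here (possibly with $\phi_*$ superattracting, as in the forthcoming \autoref{thm:skewstab:skewcounter}); this rests on the general dynamical fact that forward iterates of any neighbourhood of a Julia point omit only finitely many exceptional Type I points, not on simpleness per se.
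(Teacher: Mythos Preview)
Your proof is correct and follows essentially the same route as the paper: pass to the vertex set $\Gamma=\Gamma(X_b)$, use non-preperiodicity to find an iterate of $\zeta$ landing in a $\Gamma$-domain, invoke \autoref{lem:skewstab:fatoudomain} to see that domain must be a J-domain, and then translate via \autoref{prop:stabskew:analyticallystable}. Your extra care with the pigeonhole argument and the explicit appeal to Julia-set invariance are fine elaborations, and your closing remark that \autoref{lem:skewstab:fatoudomain} is stated only for \emph{simple} skew products while the intended applications (\autoref{thm:skewstab:priorcounterex}, \autoref{thm:skewstab:skewcounter}) are superattracting is a genuine observation---the underlying Montel-type statement from \cite[Theorem~3.83]{thesis} is what is really being used, and that does not require simpleness.
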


\begin{proof}
 Let $\Gamma = \Gamma(X_b) \subset \bH$ be the finite vertex set corresponding to the divisors in $X_b$. Since $\zeta$ has an infinite orbit, some $\phi_*^n(\zeta) = \zeta_1$ must eventually fall outside $\Gamma$ into a $\Gamma$-domain $U$. By \autoref{lem:skewstab:fatoudomain} this is a J-domain since $\zeta_1 \in \J_{\phi, \an}$, and so $\phi_*$ is not analytically stable. Then \autoref{prop:stabskew:analyticallystable} says $\phi$ is not algebraically stable.
\end{proof}

As a consequence, any curve in the backward orbit of this $E$ in the lemma causes destabilising orbits also.

\begin{lem}\label{lem:skewstab:tail}
Let $\phi : X \dashto X$, be a rational skew product over the curve $B$. Suppose $b \in B$ is a $p$-periodic point of $\phi_1$ and $c \in \Orb_{\phi_1}^-(b)$ i.e.\ $\phi_1^N(c) = b$. Let $\left(\phi_*^{(j)} : \P^1_{\an, (j)}\right)_{j=-N}^{p-1}$ be the corresponding preperiodic chain of skew products on the Berkovich projective line, with $\redct_j : \P^1_{\an, (j)} \to Y_{\phi_1^{j+N}(c)}$. Suppose that $\zeta \in \P^1_{\an, (-N)}$ is a Type II point and $E = \redct_{-N}(\zeta) \subset X_c$ is a curve such that $\phi_*^N(\zeta) \in \J_{(0), \an}$ is a Julia point which is not preperiodic. Then $E$ is a destabilising curve for $\phi$.
\end{lem}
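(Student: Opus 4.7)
The plan is to mirror \autoref{lem:skewstab:wanderingjulia}, lifting its argument from the single-fibre setting to the preperiodic chain. For each $-N \le j \le p-1$ write $\Gamma_{(j)} = \Gamma(X_{\phi_1^{j+N}(c)}) \subset \P^1_{\an, (j)}$ for the vertex set consisting of Type II points reducing to irreducible components of the corresponding fibre, extending the index periodically for $j \ge 0$. By hypothesis $\zeta \in \Gamma_{(-N)}$. Since $\phi_*^N(\zeta) \in \J_{(0), \an}$ is non-preperiodic, the forward orbit $\{\phi_*^n(\zeta)\}_{n \ge 0}$ is infinite, and from index $N$ onwards it lies in the Julia part of the chain by the invariance in \autoref{rmk:skewstab:extendedfatoujulia}. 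In particular it cannot remain inside the finite vertex sets; let $M \ge 1$ be the smallest index with $\phi_*^M(\zeta) \notin \Gamma_{(-N+M)}$, and $U$ the $\Gamma_{(-N+M)}$-domain containing $\phi_*^M(\zeta)$.

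The heart of the argument is to verify that $U$ is a J-domain. Once this is done, $\phi_*^{M-1}(\zeta) \in \Gamma_{(-N+M-1)}$ is a destabilising point in the chain; the preperiodic analogue of \autoref{prop:stabskew:analyticallystable} (whose proof uses no periodicity) then produces a destabilising orbit for $\phi$ whose contracted curve lies in the forward $\phi$-orbit of $E$. In the same sense as \autoref{lem:skewstab:wanderingjulia}, this exhibits $E$ as the destabilising curve, with $E$ itself being contracted directly by $\phi$ in the special case $M = 1$.

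To show $U$ is a J-domain I split into two cases according to $M \ge N$ or $M < N$. In the first case, $\phi_*^M(\zeta) \in U$ is a Julia point for the periodic self-map $\phi_*^p$ acting on $\P^1_{\an, (-N+M)}$, so \autoref{lem:skewstab:fatoudomain} applies directly. In the second case, set $V := \phi_*^{N-M}(U)$, an open neighbourhood of the Julia point $\phi_*^N(\zeta) \in \P^1_{\an, (0)}$; the density statement underlying \autoref{lem:skewstab:fatoudomain} (namely \cite[Theorem 3.83]{thesis}, applied to $\phi_*^p$ at the Julia point $\phi_*^N(\zeta)$) forces $\bigcup_{n \ge 0} \phi_*^{pn}(V)$ to omit only finitely many Type I points of $\P^1_{\an, (0)}$, so it must meet the (finite, Type II) set $\Gamma_{(0)}$. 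Unwinding, some $\phi_*^{pn + N - M}(U)$ meets $\Gamma_{(0)}$, and so $U$ is a J-domain for the chain.

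The main obstacle I expect is precisely this bookkeeping between the preperiodic and periodic portions of the chain: the non-autonomous maps $\phi_*^{(j)}$ for $j < 0$ carry no Fatou--Julia theory of their own, so the J-domain property of a $U$ lying in the preperiodic portion has to be inherited from the periodic self-map $\phi_*^p$ by a pushforward. Once this is handled the proof closes along the lines of \autoref{lem:skewstab:wanderingjulia}.
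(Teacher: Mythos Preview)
The paper leaves this lemma unproved, stating it immediately after \autoref{lem:skewstab:wanderingjulia} with only the remark ``As a consequence, any curve in the backward orbit of this $E$ in the lemma causes destabilising orbits also.'' Your proposal is exactly the natural way to supply the details: it transports the argument of \autoref{lem:skewstab:wanderingjulia} to the preperiodic chain, handling the extra bookkeeping (that the orbit may leave the vertex sets before reaching the periodic part) by pushing forward into $\P^1_{\an,(0)}$ and invoking the Julia density statement there. This is correct, and your parenthetical observation that the contracted curve is strictly $\phi^{M-1}(E)$ rather than $E$ itself matches the slightly informal usage in the paper's own proof of \autoref{lem:skewstab:wanderingjulia}.
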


\begin{lem}\label{lem:skewstab:intervaljulia}
 Let $\phi$ be the skew product defined in \autoref{thm:skewstab:priorcounterex} or \autoref{thm:skewstab:skewcounter}, and $\phi_* : \P^1_\an \to \P^1_\an$ the skew product on the Berkovich projective line induced by $\phi$ over the fibre $\set{x=0}$. Then $[\zeta(0, \abs x^{\frac 43}), \zeta(0, 1)] \subset \J_{\phi, \an}$, and $\zeta(0, \abs x)$ is not preperiodic.
\end{lem}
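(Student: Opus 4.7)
The plan is to reduce the statement to an explicit piecewise-linear dynamical system on the parameter $s \in [0, 4/3]$ for the Berkovich disks $\zeta_s := \zeta(0, |x|^s)$, and then apply the basic Fatou-Julia theory. First, since $|1 - x| = 1$ and $\phi_1(x) = (\text{unit}) \cdot x^2$, the $(1-x)$ factors do not affect disk-norm computations and $\phi_*$ has scale factor $\q = 1/2$. Applying the definition,
\[
\abs[\phi_*(\zeta_s)]{y} \;=\; \abs[\zeta_s]{x^4 y^{-3} + y^3}^{1/2} \;=\; \max\bigl(|x|^{4-3s},\,|x|^{3s}\bigr)^{1/2},
\]
so $\phi_*(\zeta_s) = \zeta_{g(s)}$ with the tent map $g(s) = 3s/2$ on $[0, 2/3]$ and $g(s) = 2 - 3s/2$ on $[2/3, 4/3]$, slopes $\pm 3/2$, peak $g(2/3) = 1$, and image $g([0, 4/3]) = [0, 1] \subset [0, 4/3]$.

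Next I exhibit a repelling Type II fixed point. Solving $g(s) = s$ on $(2/3, 4/3]$ yields $s = 4/5$; the hyperbolic derivative is $|g'(4/5)| = 3/2 > 1$, so $\zeta_{4/5}$ is a repelling Type II fixed point and hence Julia. I would then propagate Julia-ness to the whole interval via backward invariance: restricted to $[0, 1]$, $g$ is a surjective expanding map (two preimages for every point, each branch stretching by $3/2$), so the backward orbit of $s = 4/5$ is dense in $[0, 1]$; combined with closure and backward-invariance of $\J_{\phi, \an}$, this gives $[\zeta(0, 1), \zeta(0, |x|)] \subset \J_{\phi, \an}$. For $s \in (1, 4/3]$, $g(s) = (4 - 3s)/2 \in [0, 1/2) \subset [0, 1]$, so $\zeta_s \in \phi_*^{-1}(\J_{\phi, \an}) = \J_{\phi, \an}$, completing the interval.

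For non-preperiodicity of $\zeta(0, |x|) = \zeta_1$, I track the orbit under $g$ starting at $s_0 = 1$: $s_1 = 1/2,\ s_2 = 3/4,\ s_3 = 7/8,\ s_4 = 11/16, \ldots$. An induction shows $s_n = p_n/2^n$ in lowest terms with $p_n$ odd; indeed if $s_n = p/2^n$ with $p$ odd, then $s_{n+1}$ equals $3p/2^{n+1}$ or $(2^{n+1} - 3p)/2^{n+1}$, both with odd numerator. Since denominators grow strictly, the $s_n$ are pairwise distinct and the orbit is aperiodic.

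The main obstacle is justifying rigorously that a repelling Type II fixed point is Julia in this \emph{superattracting} context, since the Fatou-Julia classification in \cite[\S 3]{thesis} is formally stated for simple skew products. One either adapts the Berkovich expansion argument (a neighbourhood of $\zeta_{4/5}$ expands by $3/2$ hyperbolically along the interval at each iteration, and one uses the residue-class dynamics at the fixed point to cover cofinitely many points of $\P^1_\an$), or argues by hand using the concrete formula for $\phi_*$; this is the technical heart of the proof.
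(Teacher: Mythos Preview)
Your proof is correct and essentially the same as the paper's: compute the tent map on the parameter $s$, locate the repelling Type II fixed point at $s=4/5$, propagate to the whole interval via backward-orbit density and closure of the Julia set, and run the $2$-adic denominator argument for aperiodicity of $s_0=1$. The obstacle you flag is dispatched in the paper by a direct citation (Theorem~3.72 of the thesis, which applies in the superattracting case), and the only cosmetic difference is that the paper establishes density on all of $(0,4/3)$ at once via an interval-covering claim (every nontrivial subinterval eventually covers $[1/2,1]$) rather than handling $[0,1]$ and $(1,4/3]$ separately as you do.
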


\begin{proof}
By {\cite[Theorem 3.37]{thesis}}, $\zeta(0, r) \mapsto \zeta(0, R)$ where $R = \max (\abs{x}^2r^{-3/2}, r^{3/2})$. Hence, on the interval $(0, \infty) \subset \P^1_\an$, the dynamics is described by $\zeta(0, \abs x^t) \mapsto \zeta(0, \abs x^{T_\phi(t)})$, where
\[T_\phi : t\longmapsto 
\begin{cases}
 \frac {3}{2}t & t \le \frac 23\\
 2 - \frac {3}{2}t & t > \frac 23
\end{cases}
\]
\emph{Claim}: For any non-trivial subinterval $I \subseteq (0, \frac 43)$ there is an $N$ with $T_\phi^n(I) \supseteq [\half, 1]$. 
 \thesisarticle{We proved this in the \hyperref[proof:stab:counterex]{proof~of~\autoref*{thm:intro:counterex}}.
 }{
Note first that $T_\phi((1, \frac 43)) = (0, \half)$ and the points in $(0, \half)$ are repelled away from $0$ into $[\half, 1]$, which is forward invariant. Therefore, it is enough to prove the claim with $I \subseteq [\half, 1]$. 
The map $T_\phi(t)$ expands, by a factor of $\frac 32$, the lengths of any subinterval which does not include $t = \frac 23$. If an interval $(a, b)$ does include $\frac 23$, then this applies to $(a, \frac 23)$ or $(\frac 23, b)$, so $T_\phi((a, b))$ has length at least $\max\{\frac 32(b-\frac 23), \frac 32(\frac 23-a)\} \ge \frac 32 \cdot \half (b-a) = \frac 34(b-a)$. So, if no two consecutive intervals in the sequence $T_\phi^n(I)$ contain $\frac 23$, then for each $n$ the length of $T_\phi^{n+2}(I)$ is $\frac 32 \cdot \frac 34 = \frac 98 > 1$ times greater than the length of $T_\phi^n(I)$. Since $I \subset [0, 1]$ and $T_\phi([0, 1]) = [0, 1]$, this cannot occur indefinitely. Thus, for some $n$, both $T_\phi^n(I)$ and $T_\phi^{n+1}(I)$ contain $\frac 23$. Under $T_\phi$ we have
$\frac 23 \mapsto 1 \mapsto \frac 12$, therefore $[\half, 1] \subset T_\phi^{n+2}(I)$.
 }
 
 On $[\half, 1]$ one can check that $t= \frac 45$ is a fixed point of $T_\phi$, so $\zeta = \zeta(0, \abs x^{4/5})$ is a fixed point of $\phi_*$. Further, this Type II point has \dmult{} $\frac 32$, so it is \numericallyrepelling{} and hence Julia by {\cite[Theorem 3.72]{thesis}}. By the claim, any subinterval of $(0, \frac 43)$ intersects $\Orb_{T_\phi}^-(\frac 45)$ and so $\Orb_{\phi_*}^-(\zeta)$ is dense in $(\zeta(0, 1), \zeta(0, \abs x^{\frac 43}))$. Since the Julia set is closed and backward invariant we get $[\zeta(0, 1), \zeta(0, \abs x^{\frac 43})] \subset \J_{\phi, \an}$.
 
 Now we show that $\zeta(0, \abs x^1)$ is not preperiodic under $\phi_*$. It is enough to show that under $T_\phi$, the parameter $1$ has an infinite orbit.
\[1 \mapsto \frac 12 \mapsto \frac 34 \mapsto \frac 78 \mapsto \frac{11}{16} \mapsto \cdots\]
To justify this, suppose that $a$ odd and compute $T_\phi\left(\frac a{2^n}\right)$.
\[\frac a{2^n} \longmapsto
\begin{cases}
 \frac {3a}{2^{n+1}} & \mathmakebox[4mm]{\frac a{2^n}} < \frac 23\\
 2 - \frac {3a}{2^{n+1}} = \frac{2^{n+2} - 3a}{2^{n+1}} & \mathmakebox[4mm]{\frac ab} > \frac 23
\end{cases}
\]
Since both $3a$ and $2^{n+2} -3a$ are odd, the image has a larger denominator. By induction, the orbit is infinite.
\end{proof}

\begin{proof}[Proof of \autoref{thm:skewstab:priorcounterex}]
  The initial stabilisation $\rho : (\tilde \phi, \P^1 \times \P^1) \dashto (\phi, X)$ is a blowup and blowdown of the line at infinity. Blowing up the origin to produce $\tilde X$ produces an exceptional divisor $E_1$ which is the reduction of $\zeta(0, \abs x)$ to $\tilde X$. Instead of proving the theorem as stated, we will prove something stronger:  
  For any rational surface $Y$ and birational map $\rho : Y \dashto \tilde X$, such that $Y$ contains (a curve which is the proper transform) $E_1$, $Y$ must contain a destabilising curve. Specifically, this will be a divisor above $x=0$, corresponding to a Type II Julia point $\zeta$ which is not preperiodic.
 
Suppose $\redct_{Y, 0}(\zeta(0, \abs x)) = E_1 \subset Y$. We know from \autoref{lem:skewstab:intervaljulia} that $\zeta(0, \abs x)$ is a Julia point and not preperiodic. Now, by \autoref{lem:skewstab:wanderingjulia}, $E_1$ has to be a destabilising curve. We remark that these assertions did not require smoothness or any property of $Y$ or $\phi$ except that some curve reduces to $\zeta(0, \abs x)$. This holds if we replace $\phi$ with an iterate $\phi^j$, since $\J_{\phi, \an} = \J_{\phi^j, \an}$.
\end{proof}

The proof of \autoref{thm:skewstab:skewcounter} now follows with little more work because the formulae are so similar. Here is a concise account of the proof below. We see in formula, \[\psi : (x, y) \longmapsto \left((1-x)x^2, (1-x)(x^4y^{-3} + y^3)\right), \]
  that $1 \in \psi_1^{-1}(0)$, and furthermore $\set{x=1}$ has an infinite backward orbit of fibres in $\P^1\times \P^1$. If we blow up $\P^1\times \P^1$ at the origin, producing an exceptional curve $E_1$, one can check that $\psi(\set{x=1}) = E_1$. For any $n \ge 1$ and $c\in \k^\times$ such that $\psi_1^n(c) = 0$, hence $\psi^n(\set{x=c}) = E_1$. The corresponding picture in the Berkovich projective line is $\psi_*^n(\zeta(0, 1)) = \zeta(0, \abs x)$. Since a birational transformation can only collapse or modify finitely many fibres, this remains true for infinitely many $n$ on any surface. By \autoref{lem:skewstab:intervaljulia} $\zeta(0, \abs x)$ is Julia, and not preperiodic. Therefore on an arbitrary birational model, for infinitely many $n$ and $c \in \phi_1^{-n}(0)$, $\set{x=c}$ is a destabilising curve by \autoref{lem:skewstab:tail}. 

\begin{proof}[Proof of \autoref{thm:skewstab:skewcounter}]
 Consider initially $\psi$ as a rational map $\P^1\times \P^1$. Over any $b \in \P^1$, the Gauss point, $\redct_b(\zeta(0, 1)) = F_b$ reduces to the (entire) fibre of $\P^1\times \P^1$ over $b$. Clearly, $\Orb_{\psi_1}^-(1)$ is infinite since $\psi_1(1) = 0$ is fixed but not totally ramified by $\psi_1$. Let $\rho : Y \dashto \P^1\times \P^1$ be an arbitrary birational modification. The inverse $\rho^{-1}$ can contract at most finitely many curves, in particular only finitely many fibres above $\Orb_{\psi_1}^-(1)$. Therefore we can find an $N \in \N$ and $c \in \psi_1^{-N}(1)$ such that the proper transform of $F_b$ by $\rho^{-1}$ is the fibre $Y_c \cong \P^1$ above $c$, and thus $\redct_{Y_c}(\zeta(0, 1)) = Y_c$. Consider the (preperiodic) chain of skew products $\left(\psi_*^{(j)} : \P^1_{\an, (j)}\right)_{j=0}^{N+1}$ with $\redct_j : \P^1_{\an, (j)} \to Y_{\psi_1^j(c)}$. 

Consider the map $\psi_* : \P^1_{\an, (N)} \to \P^1_{\an, (N+1)}$ corresponding to $\psi_1 : 1 \mapsto 0$. We have $\P^1_{\an, (N+1)}$ defined over the Puiseux series in $x$ with norm $\abs\cdot_x$, whilst $\P^1_{\an, (N)}$ defined over the Puiseux series in $x' = 1-x$ with norm $\abs\cdot_{x'}$. Then $\psi_*$ is a simple skew product with respect to $x$ and $x'$, because $\abs{\psi_1^*(x)}_{x'} = \abs{x'(1-x')^2}_{x'} = \abs{x'}_{x'}^1$. The expression $(1-x)(x^4y^{-3} + y^3)$ can be rewritten $x'((1-x')^4y^{-3} + y^3)$, so one can check that $\psi_{2*}(\zeta(0, 1)) = \zeta(0, \abs {x'})$. Now, $\psi_{1*}$ fixes every point on $[0, \infty]$ and has scale factor $1$, so after applying $\psi_{1*}$ we obtain $\psi_*(\zeta(0, 1)) = \zeta(0, \abs x)$. One can check that the critical points of $\psi_1$ are $\set{0, \frac 23, \infty}$ and that that the iterates $(\psi_1^j(\frac23))_{j=0}^\infty$ wander and do not include $1$ or $0$. Hence $\psi_1$ is unramified at $\psi_1^j(c)$ for every $j \le N$, so similarly $\psi_*^{(j)} : \P^1_{\an, (j)} \to \P^1_{\an, (j+1)}$ is a simple skew product. Furthermore, for every $j < N$, this skew product has good reduction, whence $\psi_*^{(j)}(\zeta(0, 1)) = \zeta(0, 1)$. 
 In summary, $\psi_*^{N+1}(\zeta(0, 1)) = \zeta(0, \abs x) \in \P^1_{\an, (N+1)}$. 
 
 The action of $\psi_*$ on $[\zeta(0, 1), \zeta(0, \abs x)]$ is the same as for the skew product $(x, y) \mapsto (x^2, x^4y^{-3} + y^3)$ studied in \autoref{thm:skewstab:priorcounterex} essentially because $(1-x)$ is a unit of the Puiseux series in $x$.
 Using the same proof as above for (the advanced version of) \autoref{thm:skewstab:priorcounterex}, we conclude that $\zeta(0, \abs x) \in \J_{(N+1), \an}$ is Julia and not preperiodic. 
 Finally, by \autoref{lem:skewstab:tail}, $\psi$ is not algebraically stable since $\redct_0(\zeta(0, 1)) = Y_b$ and $\psi_*^{N+1}(\zeta(0, 1)) = \zeta(0, \abs x) \in \J_{\psi, \an}$ is a wandering Julia point.
\end{proof}

Through \autoref{thm:skewstab:skewcounter} we can see a general strategy to produce skew products on $\P^1\times\P^1$ which are not potentially algebraically stable. First, write down $\phi_1 : \P^1 \to \P^1$ with a superattracting point, say $0$, which is not exceptional (finite backward orbit). Specifically, suppose $b \in \phi_1^{-1}(0)$ is not in the cycle of $0$. Second, write down a $\phi_2(x, y) \in \k(x, y)$ which has bad reduction at $x=0$ and the following properties.
\begin{enumerate}
 \item The non-Archimedean skew product $\phi_*$ induced by $(\phi_1(x), \phi_2(x, y))$ above $x=0$ has a Type II point $\zeta$ of generic multiplicity $1$ which is Julia and not preperiodic.
 \item $\phi_2$ has good reduction for every $c \in \Orb_{\phi_1}^-(b) \sm \set b$.
 \item On the skew product $\phi_* : \P^1_{\an, (x-b)} \to \P^1_{\an, (x)}$ corresponding to $\phi_1 : b \mapsto 0$, we have $\phi_*(\zeta(0, 1)) = \zeta$. It follows from good reduction that for any \sloppy\mbox{$c \in \phi_1^{-N}(b)$}, we have $\phi_*^N(\zeta(0, 1)) = \zeta(0, 1)$ for the skew product $\phi_*^N$ corresponding to $\phi_1^N : c \mapsto b$.
\end{enumerate}
For the first part, one can write down an expanding (piecewise linear) interval map (such as $T_\phi$ in the counterexamples) and realise it with some $\phi_*$ on a forward invariant interval of $\P^1_\an$. For the third part, it may help to find an element $\theta \in \PGL(2, \k(x-b))$ which has good reduction on every fibre except $x=b$, and compose $\psi = \theta \circ \phi$. Moving from example \autoref{thm:skewstab:priorcounterex} to \autoref{thm:skewstab:skewcounter} we chose the transformation $\theta(x, y) = (1-x)y$, which does not disturb the chaotic map on the interval $(0, \infty) \subset \P^1_\an$.

 \bibliographystyle{alpha}
\bibliography{Thesis}

\newcommand{\etalchar}[1]{$^{#1}$}
\begin{thebibliography}{BKT{\etalchar{+}}08}

\bibitem[Abb24]{Abboud}
Marc Abboud.
\newblock On the dynamics of endomorphisms of affine surfaces, 2024.

\bibitem[BD05]{BD}
Eric Bedford and Jeffrey Diller.
\newblock Energy and invariant measures for birational surface maps.
\newblock {\em Duke Math. J.}, 128(2):331--368, 2005.

\bibitem[Ben00]{Bene0}
Robert~L. Benedetto.
\newblock {$p$}-adic dynamics and {S}ullivan's no wandering domains theorem.
\newblock {\em Compositio Math.}, 122(3):281--298, 2000.

\bibitem[Ben01a]{Bene2}
Robert~L. Benedetto.
\newblock Hyperbolic maps in p-adic dynamics.
\newblock {\em Ergodic Theory and Dynamical Systems}, 21:1 -- 11, 02 2001.

\bibitem[Ben01b]{Bene3}
Robert~L. Benedetto.
\newblock Reduction, dynamics, and {J}ulia sets of rational functions.
\newblock {\em J. Number Theory}, 86(2):175--195, 2001.

\bibitem[Ben19]{Bene}
Robert~L. Benedetto.
\newblock {\em Dynamics in one non-archimedean variable}, volume 198 of {\em
  Graduate Studies in Mathematics}.
\newblock American Mathematical Society, Providence, RI, 2019.

\bibitem[Ber90]{Berk}
Vladimir~G. Berkovich.
\newblock {\em Spectral theory and analytic geometry over non-{A}rchimedean
  fields}, volume~33 of {\em Mathematical Surveys and Monographs}.
\newblock American Mathematical Society, Providence, RI, 1990.

\bibitem[BFJ16]{BFJ}
S\'{e}bastien Boucksom, Charles Favre, and Mattias Jonsson.
\newblock Singular semipositive metrics in non-{A}rchimedean geometry.
\newblock {\em J. Algebraic Geom.}, 25(1):77--139, 2016.

\bibitem[Bir23]{thesis}
Richard A.~P. Birkett.
\newblock {\em Algebraic Stability and Skew Products on the Berkovich
  Projective Line}.
\newblock PhD thesis, University of Notre Dame, April 2023.

\bibitem[Bir25]{stability}
Richard A.~P. Birkett.
\newblock On the stabilisation of rational surface maps.
\newblock {\em Annales de la Facult\'{e} des sciences de Toulouse :
  Math\'{e}matiques}, Ser. 6, 34(1):to appear, 2025.

\bibitem[BK06]{BK06}
Eric Bedford and Kyounghee Kim.
\newblock Periodicities in linear fractional recurrences: degree growth of
  birational surface maps.
\newblock {\em Michigan Math. J.}, 54(3):647--670, 2006.

\bibitem[BK10]{BK10}
Eric Bedford and Kyounghee Kim.
\newblock Continuous families of rational surface automorphisms with positive
  entropy.
\newblock {\em Math. Ann.}, 348(3):667--688, 2010.

\bibitem[BKT{\etalchar{+}}08]{BKTAM}
Eric Bedford, Kyounghee Kim, Tuyen~Trung Truong, Nina Abarenkova, and
  Jean-Marie Maillard.
\newblock Degree complexity of a family of birational maps.
\newblock {\em Math. Phys. Anal. Geom.}, 11(1):53--71, 2008.

\bibitem[BPR13]{BPR}
Matthew Baker, Sam Payne, and Joseph Rabinoff.
\newblock On the structure of non-{A}rchimedean analytic curves.
\newblock In {\em Tropical and non-{A}rchimedean geometry}, volume 605 of {\em
  Contemp. Math.}, pages 93--121. Amer. Math. Soc., Providence, RI, 2013.

\bibitem[DDG10a]{DDG1}
Jeffrey Diller, Romain Dujardin, and Vincent Guedj.
\newblock Dynamics of meromorphic maps with small topological degree {I}: from
  cohomology to currents.
\newblock {\em Indiana Univ. Math. J.}, 59(2):521--561, 2010.

\bibitem[DDG10b]{DDG3}
Jeffrey Diller, Romain Dujardin, and Vincent Guedj.
\newblock Dynamics of meromorphic maps with small topological degree {III}:
  geometric currents and ergodic theory.
\newblock {\em Ann. Sci. \'Ec. Norm. Sup\'er. (4)}, 43(2):235--278, 2010.

\bibitem[DDG11]{DDG2}
Jeffrey Diller, Romain Dujardin, and Vincent Guedj.
\newblock Dynamics of meromorphic mappings with small topological degree {II}:
  {E}nergy and invariant measure.
\newblock {\em Comment. Math. Helv.}, 86(2):277--316, 2011.

\bibitem[DF01]{DF}
Jeffrey Diller and Charles Favre.
\newblock Dynamics of bimeromorphic maps of surfaces.
\newblock {\em Amer. J. Math.}, 123:1135--1169, 12 2001.

\bibitem[DF14]{DeF1}
Laura DeMarco and Xander Faber.
\newblock Degenerations of complex dynamical systems.
\newblock {\em Forum Math. Sigma}, 2:Paper No. e6, 36, 2014.

\bibitem[DF16]{DeF2}
Laura DeMarco and Xander Faber.
\newblock Degenerations of complex dynamical systems {II}: analytic and
  algebraic stability.
\newblock {\em Math. Ann.}, 365(3-4):1669--1699, 2016.
\newblock With an appendix by Jan Kiwi.

\bibitem[DL16]{DL}
Jeffrey Diller and Jan-Li Lin.
\newblock Rational surface maps with invariant meromorphic two-forms.
\newblock {\em Math. Ann.}, 364(1-2):313--352, 2016.

\bibitem[DN11]{DinhNg}
Tien-Cuong Dinh and Vi\^et-Anh Nguy\^en.
\newblock Comparison of dynamical degrees for semi-conjugate meromorphic maps.
\newblock {\em Comment. Math. Helv.}, 86(4):817--840, 2011.

\bibitem[Fav02]{Fav}
Charles Favre.
\newblock Les applications monomiales en deux dimensions.
\newblock {\em The Michigan Mathematical Journal}, 51, 11 2002.

\bibitem[FJ04]{FJ04}
Charles Favre and Mattias Jonsson.
\newblock {\em The valuative tree}, volume 1853 of {\em Lecture Notes in
  Mathematics}.
\newblock Springer-Verlag, Berlin, 2004.

\bibitem[FJ07]{FJ07}
Charles Favre and Mattias Jonsson.
\newblock Eigenvaluations.
\newblock {\em Ann. Sci. \'Ecole Norm. Sup. (4)}, 40(2):309--349, 2007.

\bibitem[FJ11]{FJ11}
Charles Favre and Mattias Jonsson.
\newblock Dynamical compactifications of {{\(\mathbb{C}^2\)}}.
\newblock {\em Ann. of Math. (2)}, 173(1):211--249, 2011.

\bibitem[FS95]{FS2}
John~Erik Forn\ae{}ss and Nessim Sibony.
\newblock Complex dynamics in higher dimension. {II}.
\newblock In {\em Modern methods in complex analysis ({P}rinceton, {NJ},
  1992)}, volume 137 of {\em Ann. of Math. Stud.}, pages 135--182. Princeton
  Univ. Press, Princeton, NJ, 1995.

\bibitem[GR14]{GR1}
William Gignac and Matteo Ruggiero.
\newblock Growth of attraction rates for iterates of a superattracting germ in
  dimension two.
\newblock {\em Indiana Univ. Math. J.}, 63(4):1195--1234, 2014.

\bibitem[GR21]{GR2}
William Gignac and Matteo Ruggiero.
\newblock Local dynamics of non-invertible maps near normal surface
  singularities.
\newblock {\em Mem. Amer. Math. Soc.}, 272(1337):xi+100, 2021.

\bibitem[Gue02]{Gue}
Vincent Guedj.
\newblock Dynamics of polynomial mappings of {$\mathbb C^2$}.
\newblock {\em Amer. J. Math.}, 124(1):75--106, 2002.

\bibitem[Gue05]{Gue05}
Vincent Guedj.
\newblock Ergodic properties of rational mappings with large topological
  degree.
\newblock {\em Ann. of Math. (2)}, 161(3):1589--1607, 2005.

\bibitem[Har77]{Hart}
Robin Hartshorne.
\newblock {\em Algebraic Geometry}.
\newblock Springer, 1977.

\bibitem[JW11]{JW}
Mattias Jonsson and Elizabeth Wulcan.
\newblock Stabilization of monomial maps.
\newblock {\em Michigan Math. J.}, 60(3):629--660, 2011.

\bibitem[NZ23]{NZ}
Hongming Nie and Shengyuan Zhao.
\newblock Berkovich dynamics of twisted rational maps.
\newblock 2023.

\bibitem[Ram18]{Ram1}
Rohini Ramadas.
\newblock Hurwitz correspondences on compactifications of {$\mathcal M_{0,N}$}.
\newblock {\em Adv. Math.}, 323:622--667, 2018.

\bibitem[Ram20]{Ram2}
Rohini Ramadas.
\newblock Dynamical degrees of {H}urwitz correspondences.
\newblock {\em Ergodic Theory Dynam. Systems}, 40(7):1968--1990, 2020.

\bibitem[RL03a]{RL2}
Juan Rivera-Letelier.
\newblock Dynamique des fonctions rationnelles sur des corps locaux.
\newblock Number 287, pages xv, 147--230. 2003.
\newblock Geometric methods in dynamics. II.

\bibitem[RL03b]{RL1}
Juan Rivera-Letelier.
\newblock Espace hyperbolique {$p$}-adique et dynamique des fonctions
  rationnelles.
\newblock {\em Compositio Math.}, 138(2):199--231, 2003.

\bibitem[RL05]{RL4}
Juan Rivera-Letelier.
\newblock Points p\'{e}riodiques des fonctions rationnelles dans l'espace
  hyperbolique {$p$}-adique.
\newblock {\em Comment. Math. Helv.}, 80(3):593--629, 2005.

\bibitem[Roe15]{roeder}
Roland K.~W. Roeder.
\newblock The action on cohomology by compositions of rational maps.
\newblock {\em Math. Res. Lett.}, 22(2):605--632, 2015.

\bibitem[Sib99]{Sib}
Nessim Sibony.
\newblock Dynamique des applications rationnelles de {$\mathbb P^k$}.
\newblock In {\em Dynamique et g\'eom\'etrie complexes ({L}yon, 1997)},
  volume~8 of {\em Panor. Synth\`eses}, pages ix--x, xi--xii, 97--185. Soc.
  Math. France, Paris, 1999.

\bibitem[Tru20]{Truong}
Tuyen~Trung Truong.
\newblock Relative dynamical degrees of correspondences over a field of
  arbitrary characteristic.
\newblock {\em J. Reine Angew. Math.}, 758:139--182, 2020.

\bibitem[Wei]{Weinreich2}
Max Weinreich.
\newblock Algebraic billiards in the fermat hyperbola.
\newblock https://arxiv.org/abs/2406.16172.

\end{thebibliography}

\end{document}